\numberwithin{equation}{section}
\newtheorem{theorem}{Theorem}[section]
\newtheorem{corollary}[theorem]{Corollary}
\newtheorem{proposition}[theorem]{Proposition}
\newtheorem{prop}[theorem]{Proposition}
\newtheorem{lemma}[theorem]{Lemma}
\theoremstyle{definition}
\newtheorem{definition}[theorem]{Definition}
\newtheorem{example}[theorem]{Example}
\newtheorem{problem}[theorem]{Problem}
\newcommand{\be}{\begin{equation}}
\newcommand{\ee}{\end{equation}}
\newcommand{\bdes}{\begin{description}}
\newcommand{\edes}{\end{description}}
\newcommand{\bal}{\begin{align}}
\newcommand{\eal}{\end{align}}
\newcommand{\bnum}{\begin{enumerate}}
\newcommand{\enum}{\end{enumerate}}
\newcommand{\bit}{\begin{itemize}}
\newcommand{\eit}{\end{itemize}}
\newcommand{\bea}{\begin{eqnarray}}
\newcommand{\eea}{\end{eqnarray}}
\newcommand{\bsry}{\begin{subarray}}
\newcommand{\esry}{\end{subarray}}
\newcommand{\bca}{\begin{cases}}
\newcommand{\eca}{\end{cases}}
\newcommand{\bcen}{\begin{center}}
\newcommand{\ecen}{\end{center}}
\newcommand{\bbm}{\begin{bmatrix}}
\newcommand{\ebm}{\end{bmatrix}}
\newcommand{\bmx}{\begin{matrix}}
\newcommand{\emx}{\end{matrix}}
\newcommand{\bpm}{\begin{pmatrix}}
\newcommand{\epm}{\end{pmatrix}}
\newcommand{\btab}{\begin{tabular}}
\newcommand{\etab}{\end{tabular}}
\newcommand{\mA}{\mathcal{A}}
\newcommand{\mB}{\mathcal{B}}
\newcommand{\mF}{\mathcal{F}}
\newcommand{\mH}{\mathcal{H}}
\newcommand{\mT}{\mathcal{T}}
\newcommand{\A}{\mathcal{A}}
\newcommand{\B}{\mathcal{B}}
\newcommand{\E}{\mathcal{E}}
\newcommand{\T}{\mathcal{T}}
\newcommand{\bR}{\mathbb{R}}
\newcommand{\oh}{{\otimes h}}
\newcommand{\cH}{\mathcal{H}}
\newcommand{\bC}{\mathbb{C}}
\newcommand{\cY}{\mathcal{Y}}
\newcommand{\cZ}{\mathcal{Z}}
\newcommand{\N}{\mathbb{N}}
\newcommand{\cpx}{\mathbb{C}}
\newcommand{\F}{\mathbb{F}}
\newcommand{\lmd}{\lambda}
\newcommand{\dt}{\delta}
\newcommand{\af}{\alpha}
\newcommand{\reff}[1]{(\ref{#1})}
\newcommand{\mc}[1]{\mathcal{#1}}
\newcommand{\supp}[1]{\mbox{supp}(#1)}
\newcommand{\baray}{\begin{array}}
\newcommand{\earay}{\end{array}}
\DeclareMathOperator{\rank}{rank}
\DeclareMathOperator{\hrank}{hrank}
\DeclareMathOperator{\brank}{brank}
\DeclareMathOperator{\hbrank}{hbrank}
\def\re{\mathbb{R}}
\def\cpx{\mathbb{C}}
\def\eps{\epsilon}
\def\nn{\nonumber}
\begin{document}

\title{Hermitian Tensor Decompositions}

\author{Jiawang Nie}
\address{Department of Mathematics,
University of California San Diego,
9500 Gilman Drive, La Jolla, CA, USA, 92093.}
\email{njw@math.ucsd.edu, ziy109@ucsd.edu}

\author{Zi Yang}

\subjclass[2010]{15A69,15B48,65F99}

\date{}

\keywords{Hermitian tensor, decomposition, rank,
positive semidefiniteness, separability}

\begin{abstract}
Hermitian tensors are generalizations of Hermitian matrices, 
but they have very different properties.
Every complex Hermitian tensor is a sum of complex Hermitian rank-$1$ tensors.
However, this is not true for the real case.
We study basic properties for Hermitian tensors
such as Hermitian decompositions and Hermitian ranks.
For canonical basis tensors, we determine their Hermitian ranks and decompositions.
For real Hermitian tensors, we give a full characterization for them
to have Hermitian decompositions over the real field.
In addition to traditional flattening,
Hermitian tensors specially have Hermitian and Kronecker flattenings,
which may give different lower bounds for Hermitian ranks.
We also study other topics such as eigenvalues,
positive semidefiniteness, sum of squares representations, and separability.
\end{abstract}

\maketitle

\section{Introduction}

Let $\F = \cpx$ (the complex field) or $\re$ (the real field).
For positive integers $m>0$ and $n_1, \ldots, n_m >0$,
denote by $\F^{n_1\times \cdots \times n_m}$ the space of
tensors of order $m$ and dimension $(n_1,\ldots, n_m)$ with entries in $\F$.
A tensor $\mA \in \F^{n_1\times\cdots \times n_m}$
can be represented as a multi-array
$\mA = (\mathcal{A}_{i_1...i_m} )$, with $i_k\in\{1,...,n_k\}$ for $k =1, \ldots, m$.
When $m=3$ (resp., $4$), they are called cubic (resp., quartic) tensors.
For vectors $u_k \in \F^{n_k}$, $k=1,\ldots,m$, the
$u_1 \otimes \cdots \otimes u_m$ denotes their tensor product, i.e.,
$
(u_1 \otimes \cdots \otimes u_m)_{ i_1 \ldots i_m}  =
(u_1)_{i_1}  \cdots (u_m)_{i_m}
$
for all $i_1, \ldots, i_m$ in the range.
Tensors like $u_1 \otimes \cdots \otimes u_m$ are called rank-1 tensors.
The {\it cp rank} of $\mA$, denoted as $\rank(\mA)$,
is the smallest $r$ such that
\be  \label{cpd:mA}
\mA = {\sum}_{i=1}^r u_i^1 \otimes \cdots \otimes u_i^m,
\quad u_i^j \in \cpx^{n_j} .
\ee
In the literature, the decomposition \reff{cpd:mA} is often called
a candecomp-parafac or canonical polyadic (CP) decomposition.
We refer to \cite{dLMV04,KolBad09,Land12,Lim13,tensorlab}
for tensor decompositions,
and refer to \cite{BreVan18,dLMV04,dLa06,SvBdL13}
for tensor decomposition methods.
For uniqueness of tensor decompostions,
we refer to the work~\cite{ChOtVan17,
domanov15,GalMel,Kru77,SB00}.
%
%

Symmetric matrices are natural generalizations of symmetric tensors.
A tensor $\mA \in \F^{n \times \cdots \times n}$ of order $m$
is {\it symmetric} if $\mA_{i_1 \ldots i_m}$ is invariant
for all permutations of $(i_1, \ldots, i_m)$. Rank-1 symmetric tensors are multiples of
$u^{\otimes m} := u\otimes \cdots \otimes u$ (repeated $m$ times).
Similarly, the smallest number $r$ such that
$
\A = {\sum}_{i=1}^r \lmd_i u_i^{\otimes m},
$
with each $u_i \in \cpx^n$ and $\lmd_i \in \cpx$,
is called the {\it symmetric rank} of $\A$.
We refer to \cite{BCMT10,Comon2008,Nie-GP,OedOtt13} for the work on
symmetric tensor decompositions. Symmetric tensors
can be generalized to partial symmetric tensors \cite{Land12} and
conjugate partial symmetric tensors \cite{FuJiangLi18}.
A class of interesting symmetric tensors are Hankel tensors \cite{NieYe19}.
More work about tensor ranks can be found in
\cite{CLQY18,YeLim18}.

Hermitian tensors are natural generalizations of Hermitian matrices,
while they have very different properties.
This concept was introduced by Ni~\cite{Ni19}.
For an array $u$, we use $\overline{u}$ to denote the complex conjugate of $u$.
A tensor $\mH \in \cpx^{n_1\times\cdots\times n_m\times n_1\times\cdots \times n_m}$
is called {\it Hermitian} if
\[
\mathcal{H}_{i_1...i_m j_1...j_m}=  \overline{\mathcal{H}_{j_1...j_m i_1...i_m} }
\]
for all labels $i_1, ...,  i_m$ and $j_1, ..., j_m$ in the range.
The set of all Hermitian tensors in
$\cpx^{n_1\times\cdots\times n_m\times n_1\times\cdots \times n_m}$
is denoted as $\bC^{[n_1, \ldots,  n_m]}$. 
Clearly, for vectors $v_i \in \cpx^{n_i}$, $i=1,\ldots,m$,
the following tensor product of conjugate pairs
\be \label{Eq:vectorproduct}
 [v_1, v_2,\ldots, v_m]_\oh \, := \,  v_1 \otimes v_2 \cdots \otimes v_m \otimes
\overline{v_1} \otimes \overline{v_2} \cdots \otimes \overline{v_m}
\ee
is always a Hermitian tensor. Every rank-$1$ Hermitian tensor
must be in the form of
$\lmd \cdot [v_1, v_2,\ldots, v_m]_\oh$, for a real scalar $\lmd \in \re$.
Every Hermitian matrix is a sum of
Hermitian rank-$1$ matrices, by spectral decompositions.
The same result holds for Hermitian tensors over the complex field.
For every $\mH \in \bC^{[n_1, \ldots,  n_m]}$,
Ni~\cite{Ni19} showed that there exist vectors
$u_i^j\in \mathbb{C}^{n_j}$ and real scalars $\lmd_i\in \mathbb{R}$,
$i=1,\ldots,r$, such that
\be \label{Eq:rank-oneHD}
\mathcal{H}= {\sum}_{i=1}^r \lmd_i \,
[u_i^1, \ldots, u_i^m]_{\otimes_h}.
\ee
The equation~\reff{Eq:rank-oneHD}
is called a {\it Hermitian decomposition}.
The smallest $r$ in \reff{Eq:rank-oneHD}
is called the {\it Hermitian rank} of $\mathcal{H}$,
for which we denote $\hrank(\cH)$.
When $r$ is minimum, \reff{Eq:rank-oneHD}
is called a {\it Hermitian rank decomposition} for $\mH$.
The set $\bC^{[n_1, \ldots,  n_m]}$ is a vector space over $\re$.
For its canonical basis tensors, we determine their Hermitian ranks
as well as the rank decompositions in the subsection \ref{subsc:bht}.
For general Hermitian tensors, it is a computational challenge
to determine their Hermitian ranks.

For two tensors $\mA, \mB \in \bC^{[n_1, \ldots,  n_m]}$,
their \emph{inner product} is defined as
\be   \label{Eq:tensorinnerproduct}
\langle\mathcal{A},\mathcal{B}\rangle :=
{\sum}_{i_1, \ldots, i_m, j_1, \ldots, j_m}
\mathcal{A}_{i_1 \ldots i_m j_1 \ldots j_m}
\overline{ \mathcal{B}_{i_1 \ldots i_m j_1 \ldots j_m} } .
\ee
The \emph{Hilbert-Schmidt norm} of $\mathcal{A}$ is accordingly defined as
$||\mathcal{A}||  \, := \, \sqrt{\langle\mathcal{A},\mathcal{A}\rangle}$.
If $\mA, \mB$ are Hermitian, then
$\langle\mathcal{A},\mathcal{B}\rangle $ is real \cite{Ni19}. 
For convenience of operations, we define multilinear matrix multiplications
for tensors (see \cite{Lim13}). 
For matrices $M_k \in \mathbb{C}^{p_k \times q_k }$,
$k=1,\ldots, m$, define the matrix-tensor product
$(M_1, \ldots, M_m) \times \mT$ for $\mT \in \cpx^{q_1 \times \cdots \times q_m}$
such that it gives a linear map from
$\cpx^{q_1 \times \cdots \times q_m}$ to
$\cpx^{p_1 \times \cdots \times p_m}$
and it satisfies
\[
(M_1, \ldots, M_m) \times (u_1 \otimes \cdots \otimes u_m) =
(M_1 u_1) \otimes \cdots \otimes (M_m u_m),
\]
for all rank-$1$ tensors $u_1 \otimes \cdots \otimes u_m$.
The product $(M_1, \ldots, M_m) \times \mT$ is a tensor in
$\cpx^{p_1 \times \cdots \times p_m}$.
For two tensors $\mT_1, \mT_2$ of compatible dimensions, it holds that
\[
\langle (M_1, \ldots, M_m) \times \mT_1, \mT_2 \rangle \, = \,
\langle \mT_1, (M_1^*, \ldots, M_m^*) \times \mT_2\rangle.
\]
(The superscript $^*$ denotes the conjugate transpose.)
For square matrices $Q_k \in \cpx^{n_k\times n_k}$, $k=1,\ldots, m$,
we define the {\it multilinear congruent transformation}
for $\mA \in \bC^{[n_1, \ldots,  n_m]}$ such that
\begin{equation} \label{prod:cong}
(Q_1, \ldots, Q_m) \times_{cong} \mA  \, := \,
(Q_1, \ldots, Q_m, \overline{Q_1}, \ldots, \overline{Q_m} ) \times \mA.
\end{equation}
If each $Q_k$ is unitary, then
$\mB := (Q_1, \ldots, Q_m) \times_{cong} \mA$
is called a {\it unitary congruent transformation} of $\mA$
and $\mB$ is said to be {\it unitarily congruent} to $\mA$.
%
%
It holds that
\[
(Q_1^*, \ldots, Q_m^*) \times_{cong}
\Big( (Q_1, \ldots, Q_m) \times_{cong} \mA \Big) = \mA.
\]
If each $Q_k$ is real and orthogonal,
the tensor $\mathcal{B}$ is said to be {\it orthogonally congruent} to $\mA$.
Unitary and orthogonal congruent transformations
preserve norms of Hermitian tensors \cite{Ni19}.

%
%

Hermitian tensors have important applications in quantum physics \cite{Ni19}.
An $m$-partite pure state $|\psi \rangle $ of a quantum system can be represented
by a tensor in $\bC^{n_1\times \cdots \times n_m}$.
The complex conjugate of $|\psi \rangle $ represents another pure state $\langle \psi |$.
The conjugate product $|\psi \rangle \langle \psi|$ represents a
$2m$-partite pure state in the Hermitian tensor space $\bC^{[n_1, \ldots, n_m]}$.
A mixed quantum state can be represented by a Hermitian tensor.
The state is called unentangled (or separable) if it can be expressed as a sum of
rank-$1$ pure state products like $|\psi \rangle \langle \psi|$;
otherwise, the state is called entangled (or not separable).
%
%
Equivalently, a mixed state $\rho \in \bC^{[n_1, \ldots, n_m]}$
is unentangled if and only if
\[
\rho \,=\, \sum_{i=1}^k | \psi_i \rangle \langle \psi_i |
\]
for some rank-$1$ pure sates $| \psi_i \rangle$.
%
Mathematically, the above is equivalent to the Hermitian decomposition
\[
\rho \, = \, \sum_{i=1}^k  (u_i^1\otimes \cdots \otimes u_i^m)\otimes
\overline{(u_i^1\otimes \cdots \otimes u_i^m)}
\,= \, \sum_{i=1}^k  [u_i^1, \ldots , u_i^m]_\oh ,
\]
for complex vectors $u_i^1 \in \cpx^{n_1}, \ldots, u_i^m \in \cpx^{n_m}$.
Hermitian tensors, which can be decomposed as above, are called separable tensors.
%
%
Hermitian tensors representing mixed states are also called
density matrices. In view of algebra, Hermitian tensors can also be regarded as
real valued complex conjugate polynomials. Detection of unentangled mixed states
is related to separability of Hermitian tensors. We refer to
\cite{ardila2018measuring,blum2012density,calderaro2018direct,DLMO07}
for applications of density matrices.
%
%
Quantum information theory is closely related to tensors
\cite{DFLW17,LNSU18,NQB14,Ni19,QiZhangNi18}.
The separability issue will be studied in section~\ref{sc:sepent}.

%
%
\medskip \noindent
{\bf Contributions}\,
The paper studies Hermitian tensors.
They have very different properties from the matrix case.
For each canonical basis tensor of $\bC^{[n_1, \ldots, n_m]}$,
we determine the Hermitian rank, as well as the rank decomposition.
After that, we present some general properties about Hermitian decompositions and Hermitian ranks.
This is given in Section~\ref{sc:R1HTD}.

Every complex Hermitian tensor is a sum of complex Hermitian rank-$1$ tensors.
However, this is not true for the real case.
A real Hermitian tensor may not be able to be written as a sum of
real Hermitian rank-$1$ tensors.
We give a full characterization for real Hermitian tensors to
have real Hermitian decompositions.
Interestingly, the set of real Hermitian decomposable tensors
form a proper subspace.
The relationship between real and complex Hermitian decompositions
are also discussed.
This is presented in Section~\ref{sc:real}.

For Hermitian tensors, there are two special types of matrix flattening,
i.e., the Hermitian flattening and Kronecker flattening,
in addition to traditional flattening.
The Hermitian and Kronecker flattenings
may provide different lower bounds for Hermitian ranks.
Some new decompositions can also be obtained
from the Hermitian flattening.
This is shown in Section~\ref{sc:MatFlat}.

Positive semidefinite (psd) Hermitian tensors are also investigated.
They can be characterized by sum of squares (SOS) decompositions.
There are two different types of SOS decompositions, i.e.,
the Hermitian SOS and conjugate SOS decompositions.
They can be used to characterize psd Hermitian tensors.
Hermitian eigenvalues can also be applied to do that.
This is discussed in Section~\ref{sc:nhe}.

We also study separable Hermitian tensors, which
can be written as sums of Hermitian tensors in the form $[v_1,\ldots, v_m]_\oh$.
Separable Hermitian tensors can be characterized
in terms of truncated moment sequences
or its Hermitian flattening matrix decompositions.
Interestingly, the cone of separable Hermitian tensors
is dual to the cone of psd Hermitian tensors.
This is done in Section~\ref{sc:sepent}.

The paper is concluded in Section~\ref{sc:con},
with a list of some open questions for future work.

\medskip
\noindent
{\bf Notation} \,
The $\N$ denotes the set of nonnegative integers.
For $k=1,\ldots,m$, the $x_k$ denotes the
complex vector variable in $\cpx^{n_k}$.
The tuple of all such complex variables is denoted as
$x:=(x_1,\ldots, x_m)$. For $\F = \re$ or $\cpx$,
denote by $\F[x]$ the ring of polynomials in $x$
with coefficients in $\F$,
while $\F[x,\overline{x}]$ denotes the ring of conjugate polynomials in
$x$ and $\overline{x}$ with coefficients in $\F$.
In the Euclidean space $\F^n$,
denote by $e_i$ the $i$th standard unit vector, i.e.,
the $i$th entry of $e_i$ is one and all others are zeros,
while $e$ stands for the vector of all ones.
The $I_k$ denotes the $k$-by-$k$ identity matrix.
For a vector $u$ in $\re^n$ or $\cpx^n$,
$\| u \|$ denotes its standard Euclidean norm.
For a matrix or vector $a$, the $a^*$ denotes its conjugate transpose,
$a^T$ denotes its transpose, while $\overline{a}$ denotes its conjugate entry wise;
we use $\mbox{Re}(a)$ and $\mbox{Im}(a)$
to denote its real and complex part respectively.
For a complex scalar or vector $z$, denote $|z| := \sqrt{z^*z}$.
The $\mbox{int}(S)$ denotes the interior of a set $S$,
under the Euclidean topology.
The $\mathbb{M}^n$ denotes the set of $n$-by-$n$ Hermitian matrices,
while $\mc{S}^n$ denotes the set of
$n$-by-$n$ real symmetric matrices.
If a Hermitian matrix $X$ is positive semidefinite (resp., positive definite),
we write that $X \succeq 0$ (resp., $X \succ 0$).
The symbol $\otimes$ denotes the tensor product,
while $\boxtimes$ denotes the classical Kronecker product.
For a tensor product $u \otimes v \otimes \cdots$,
we denote by $\mbox{vec}(u \otimes v \otimes \cdots)$
the column vector of its coefficients in its representation
in terms of the basis tensors.
For an integer $k>0$, denote the set $[k] := \{1,\ldots, k\}$.
For a real number $t$, the ceiling $\lceil t \rceil$
denotes the smallest integer that is greater than or equal to $t$.

\section{Hermitian decompositions and ranks}
\label{sc:R1HTD}

This section studies Hermitian decompositions and ranks. Hermitian decompositions
can be equivalently expressed by conjugate polynomials.
For complex vector variables $x_k \in \cpx^{n_k}$,
$k=1,\ldots,m$, denote $x:=(x_1,\ldots, x_m)$.  The inner product
\[
\cH(x,\overline{x}) :=   \langle \cH, [x_1, \ldots, x_m]_\oh \rangle
\]
is a conjugate symmetric polynomial in $x$, i.e.,
$\cH(x,\overline{x})= \overline{ \cH(x,\overline{x}) }$. It only achieves real values
\cite{JiangLiZh16,Ni19}. The decomposition
$\cH = \sum_{i=1}^r \lmd_i [u_i^{1}, \ldots, u_i^{m}]_\oh$
is equivalent to the polynomial decomposition
\be \label{H(x):hermSq}
\cH(x,\overline{x})
={\sum}_{i=1}^r \lmd_i |(u_i^1)^*x_1|^2 \cdots |(u_i^m)^*x_m|^2 .
\ee
Therefore, a Hermitian decomposition of $\cH$ can be equivalently expressed as
a real linear combination of conjugate squares
like $|(u_i^1)^*x_1|^2 \cdots |(u_i^m)^*x_m|^2$.

\subsection{Hermitian decompositions for basis tensors}
\label{subsc:bht}

For convenience, denote
\[
N \,:= \, n_1 \cdots n_m, \quad
\mc{S}  := \, \Big\{ (i_1, \ldots, i_m): \,
 i_1\in [n_1], \ldots, i_m\in [n_m]  \Big \}.
\]
The cardinality of the label set $\mc{S}$ is $N$.
For two labelling tuples $I :=(i_1, \ldots, i_m)$ and $J :=(j_1, \ldots, j_m)$
in $\mc{S}$, define the ordering $ I < J $
if the first nonzero entry of $I-J$ is negative.
For a scalar $c \in \cpx$,
denote by $\mathcal{E}^{IJ}(c)$ the Hermtian tensor
in $\bC^{[n_1, \ldots,  n_m]}$ such that
\[
\big( \mathcal{E}^{IJ}(c) \big)_{i_1\cdots i_m j_1\cdots j_m}=
\overline{ \big(\mathcal{E}^{JI}(c) \big)}_{j_1\cdots j_m i_1\cdots i_m}=c
\]
and all other entries are zeros.
%
%
We adopt the standard scalar multiplication and addition
for $\bC^{[n_1,\ldots,n_m]}$,
so $\bC^{[n_1,\ldots,n_m]}$ is a vector space over $\mathbb{R}$.
The set
\be \label{basis:E}
E \, := \, \Big \{ \mathcal{E}_{II}(1) \Big \}_{ I \in \mc{S} } \bigcup
\Big \{
\mathcal{E}_{I J}(1),
\mathcal{E}_{I J}(\sqrt{-1})
\Big \}_{ I, J \in \mc{S}, I< J  }
\ee
is the {\it canonical basis} for
$\bC^{[n_1,\ldots, n_m]}$. Its dimension is
\[
\dim \bC^{[n_1,\ldots,n_m]} = N + N(N-1) = N^2.
\]
For these basis tensors, we determine their Hermitian ranks
as well as the rank decompositions.
For a basis tensor $\mc{E}^{IJ}(c)$, we are interested in $c=1$ or $\sqrt{-1}$.
Its Hermitian rank can be determined by reduction to the $2$-dimensional case.

\begin{lemma} \label{thm: basis rank dim 2}
Suppose the dimensions $n_1,\ldots, n_m \geq 2$,
$I={(i_1,\ldots,i_m)}$, and $J={(j_1,\ldots,j_m)}$.
For each $k=1,\ldots,m$, let
\[
(i_k',j_k') := (1,1)  \quad \text{if $i_k=j_k$}, \quad
(i_k',j_k') := (1,2)  \quad \text{if $i_k\neq j_k$}.
\]
Let $I' :=(i_1',\ldots,i_m'), J' := (j_1',\ldots,j_m') $.
Then, $\E^{I'J'}(c) \in \bC^{[2,\ldots,2]} $ and
\[
\hrank \, \E^{IJ}(c)  \,= \, \hrank \, \E^{I'J'}(c).
\]
\end{lemma}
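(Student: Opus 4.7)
The plan is to exhibit explicit multilinear transformations that convert $\E^{I'J'}(c)$ into $\E^{IJ}(c)$ and vice versa, and then invoke the general fact that the matrix-tensor product $(M_1,\ldots,M_m,\overline{M_1},\ldots,\overline{M_m})\times$ sends any rank-$1$ Hermitian tensor $\lmd [u_1,\ldots,u_m]_\oh$ to $\lmd [M_1 u_1,\ldots,M_m u_m]_\oh$, which is again a rank-$1$ Hermitian tensor. Consequently, such a transformation can only decrease the Hermitian rank; if we produce transformations in both directions, equality of the two Hermitian ranks follows.

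For each $k=1,\ldots,m$ I would choose an $n_k\times 2$ matrix $M_k$ whose columns are $e_{i_k}$ and $e_{j_k}$ (so that $M_k e_1=e_{i_k}$ and $M_k e_2=e_{j_k}$; when $i_k=j_k$ the two columns coincide, which is harmless), and a $2\times n_k$ matrix $L_k$ sending $e_{i_k}\mapsto e_{i_k'}$ and $e_{j_k}\mapsto e_{j_k'}$. Concretely one can take $L_k=e_1 e_{i_k}^T+e_2 e_{j_k}^T$ when $i_k\neq j_k$, and $L_k=e_1 e_{i_k}^T$ when $i_k=j_k$. A routine index computation, based on the identities $M_k e_{i_k'}=e_{i_k}$, $M_k e_{j_k'}=e_{j_k}$, $L_k e_{i_k}=e_{i_k'}$, $L_k e_{j_k}=e_{j_k'}$, together with their conjugate versions (which give the same thing since the standard basis vectors are real), then yields
\begin{align*}
(M_1,\ldots,M_m,\overline{M_1},\ldots,\overline{M_m})\times \E^{I'J'}(c) &= \E^{IJ}(c), \\
(L_1,\ldots,L_m,\overline{L_1},\ldots,\overline{L_m})\times \E^{IJ}(c) &= \E^{I'J'}(c).
\end{align*}

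Given any Hermitian rank decomposition $\E^{I'J'}(c)=\sum_{i=1}^r \lmd_i [u_i^1,\ldots,u_i^m]_\oh$, applying the first multilinear map term by term produces a Hermitian decomposition of $\E^{IJ}(c)$ with at most $r$ summands, so $\hrank \E^{IJ}(c)\le \hrank \E^{I'J'}(c)$; the second map gives the reverse inequality, and equality follows. There is no genuine obstacle here: the only thing to verify is the two displayed identities, which amounts to tracking the two nonzero entries of $\E^{IJ}(c)$ (those at positions $IJ$ and $JI$) through the multilinear product. The conceptual content of the lemma is simply that the Hermitian rank of a basis tensor depends only on which modes are ``diagonal'' ($i_k=j_k$) and which are ``off-diagonal'' ($i_k\neq j_k$), not on the particular index values, and this is exactly what the reduction to $\bC^{[2,\ldots,2]}$ captures.
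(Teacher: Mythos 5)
Your proof is correct and achieves the same result by the same underlying mechanism, namely that a multilinear transform of the form $(M_1,\ldots,M_m,\overline{M_1},\ldots,\overline{M_m})\times$ sends $\lmd[u_1,\ldots,u_m]_\oh$ to $\lmd[M_1u_1,\ldots,M_mu_m]_\oh$ and so can only lower Hermitian rank. The paper's proof splits the reduction into two steps: first an \emph{orthogonal congruence} by square permutation matrices $P_k$ (invoking Proposition~\ref{prop:rankQA=A}, which requires nonsingular $Q_k$), bringing the basis tensor to a form $\mc{F}$ with support in the first two indices of each mode; second, it asserts that $\mc{F}$ and its $\bC^{[2,\ldots,2]}$-subtensor have the same Hermitian rank, without spelling out why. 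You collapse these two steps into one by allowing rectangular $M_k\in\cpx^{n_k\times 2}$ and $L_k\in\cpx^{2\times n_k}$, exhibiting explicit maps in both directions, and concluding both inequalities $\hrank \E^{IJ}(c)\le\hrank\E^{I'J'}(c)$ and $\hrank\E^{I'J'}(c)\le\hrank\E^{IJ}(c)$. In effect your argument supplies a fully explicit justification for the subtensor step that the paper leaves informal, and it avoids any appeal to Proposition~\ref{prop:rankQA=A} altogether, since it never needs the transforms to be invertible. The small price is that the rectangular $(M_1,\ldots,\overline{M_m})\times$ is not literally the $\times_{cong}$ operation defined in the paper (which is restricted to square $Q_k$), so you should be clear you are using the general matrix-tensor product rather than the congruence notation; but the needed property still holds for rectangular matrices.
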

\begin{proof}
For each $k$, if $i_k = j_k$,
let $P_k$ be the permutation matrix that switches
the 1st and $i_k$th rows; if $i_k \ne j_k$,
let $P_k$ be the permutation matrix that
switches $i_k$th row and $j_k$th row to $1$st row and $2$nd row respectively.
Consider the orthogonal congruent transformation
\be \nn
\mc{F}  \, := \, (P_1,\ldots,P_m)\times_{cong} \E^{IJ}(c) .
\ee
Then $\mc{F}$ is the Hermitian tensor such that
$
\mc{F}_{I' J'} = \overline{ \mc{F}_{J' I'} } = c
$
and all other entries are zeros, so
$\mc{F}$ is a canonical basis tensor. Note that
$\E^{I'J'}(c)$ is the subtensor of $\mc{F}$,
consisting of the first two labels for each dimension,
hence $\E^{I'J'}(c)$ and $\mc{F}$ have the same rank.
Since nonsingular congruent transformations
preserve Hermitian ranks (see Proposition~\ref{prop:rankQA=A}),
$
\hrank \, \E^{IJ}(c)  =  \hrank \, \E^{I'J'}(c).
$
\end{proof}

In the following, for $n_1=\cdots=n_m=2$
and $I=(1 \ldots 1)$, $J=(2 \ldots 2)$,
we determine the Hermitian rank of
the basis tensor $\E^{IJ}(c)$. First, we consider $c=1$.
For each $k=0,1,\ldots,m$, let
\be \label{thetak:uk}
\theta_k := {k \pi}/{m},  \quad
u_k := (1, \exp\big( \theta_k \sqrt{-1} ) \big).
\ee
%
%
The following Hermitian tensor
\be \label{tensor:Ak}
 \A_k := \frac{1}{2}\big([u_k,u_k,\ldots,u_k]_\oh
 + [\overline{u_k},\overline{u_k},\ldots,\overline{u_k}]_\oh \big)
\ee
has rank $1$ or $2$. For each $s= 0,1, \ldots, m$,
let $J_s: =(1,\ldots,1, 2,\ldots,2)$ where $2$ appears $s$ times.
%
%
The tensor $\A_k$ has only $m+1$ distinct entries, which are
\[
 (\A_k)_{IJ_s}=\mbox{Re}\big( (u_k)_2^s \big)=
 \mbox{Re}\big( \exp(s\theta_k\sqrt{-1}) \big) = \cos(s\theta_k), \quad s= 0,1, \ldots, m.
\]
For each $k$, consider the vector
\[
w_k \, := \, \big(\cos(0 \cdot \theta_k),\cos(1 \cdot \theta_k),
\ldots, \cos(m \cdot \theta_k) \big) .
\]
Let $\lambda_k := 2(-1)^k$ for $1\le k\le m-1$,
$\lambda_k:=(-1)^k $ for $k=0,m$, and
\be \label{vec:u:sum:wk}
u  := \lambda_0 w_0+\lambda_1w_1+\cdots + \lambda_m w_m .
\ee
For $p=0,1,\ldots,m$, the $(p+1)$th entry of $u$ is
\[
(u)_{p+1} = {\sum}_{k=0}^m \lambda_k \cos(p\theta_k) =
{\sum}_{k=0}^m \lambda_k \cos(\frac{pk}{m }\pi) =
\mbox{Re}\Big( {\sum}_{k=0}^m \lambda_k \exp( \frac{pk}{m}\pi \sqrt{-1}) \Big) .
\]
For each $p=0,1,\ldots,m-1$, one can check that
(let $\af : = \frac{p}{m}\pi$ )
\[
{\sum}_{k=0}^m \lambda_k \exp( k \af \sqrt{-1} ) =
2{\sum}_{k=0}^m (-1)^k \exp( k \af \sqrt{-1} )
 - 1 - (-1)^m \exp( p\pi \sqrt{-1} )
\]
\[
= 2 \frac{1-(-\exp((m+1)\alpha \sqrt{-1})}{1+\exp( \alpha \sqrt{-1} )}- 1 - (-1)^{m+p}
\]
\[
=\bca
  0 & \text{if $m+p$ is even},  \\
 \frac{-4\sin \alpha }{(1+\cos \alpha)^2+(\sin \alpha)^2} \sqrt{-1} & \text{if $m+p$ is odd}.
\eca
\]
Hence, $(u)_{p+1} = 0$ for $0\le p \le m-1$. Moreover,
\[
(u)_{m+1} = {\sum}_{k=0}^m \lambda_k \cos(m\theta_k)
= {\sum}_{k=0}^m \lambda_k \cos(k\pi)
= {\sum}_{k=0}^m \lambda_k (-1)^k = 2m .
\]
Therefore, we have
\[
{\sum}_{k=0}^m \frac{\lambda_k}{2m} w_k = (0,\ldots,0,1), \quad
\E^{IJ}(1) = {\sum}_{k=0}^m \frac{\lambda_k}{2m}\A_k .
\]
This gives the Hermitian decomposition of length $2m$:
\begin{multline}  \label{HTD:E1122:c=1}
\E^{IJ}(1) = \frac{1}{2m} \Big([u_0,u_0,\ldots,u_0]_\oh
+(-1)^m[u_m,u_m,\ldots,u_m]_\oh
 \\
 + {\sum}_{k=1}^{m-1} (-1)^k([u_k,u_k,\ldots,u_k]_\oh+
 [\overline{u_k},\overline{u_k},\ldots,\overline{u_k}]_\oh)
\Big),
\end{multline}
where $u_k$ is given as in \reff{thetak:uk}.
For the case $c \ne 0$, one can verify that
\[
\E^{IJ}(c) = ( \bpm c & 0 \\ 0  & 1 \epm,
\bpm 1 & 0 \\ 0  & 1 \epm, \ldots, \bpm 1 & 0 \\ 0  & 1 \epm ) \times_{cong}  \E^{IJ}(1).
\]
Then, the decomposition~\reff{HTD:E1122:c=1} implies that
\begin{multline}  \label{rkHTD:E1122c}
\E^{IJ}(c) = \frac{1}{2m} \Big([\tilde{u}_0,u_0,\ldots,u_0]_\oh+(-1)^m[\tilde{u}_m,u_m,\ldots,u_m]_\oh
\\
+ {\sum}_{k=1}^{m-1} (-1)^k([\tilde{u}_k,u_k,\ldots,u_k]_\oh
+[{\tilde{v}_k},\overline{u_k},\ldots,\overline{u_k}]_\oh)
\Big),
\end{multline}
where $\tilde{u}_k =\big(c , \exp(\frac{k}{m} \pi \sqrt{-1}) \big)$
and $\tilde{v}_k = \big(c , \exp( -\frac{k}{m} \pi \sqrt{-1}) \big)$.
%
%



\begin{prop} \label{prop:1122 rank}
Assume $n_1 = \cdots = n_m = 2$, $I=(1 \ldots 1)$, $J=(2 \ldots 2)$,
and $c \ne 0$. Then, $\hrank(\E(c))=2m $
and \reff{rkHTD:E1122c} is a Hermitian rank decomposition.
\end{prop}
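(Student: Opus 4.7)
The upper bound $\hrank(\E^{IJ}(c))\le 2m$ is immediate from the explicit decomposition \reff{rkHTD:E1122c}, so I would focus on the matching lower bound, proved by induction on $m$. The base case $m=1$ is a $2\times 2$ Hermitian matrix with zero diagonal and signature $(1,1)$, hence of Hermitian rank exactly $2$.

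For the inductive step, assume the statement at order $m-1$, and fix any Hermitian decomposition
\[
\E^{IJ}(c) = \sum_{i=1}^{r} \lambda_i [u_{i1}, \ldots, u_{im}]_{\oh},
\]
normalized so every $\lambda_i$ and $u_{ij}$ is nonzero. Writing $u_{im} = (\alpha_i,\beta_i)^T$ and $T_i := [u_{i1},\ldots,u_{i,m-1}]_\oh$, I would compare coefficients of the four monomials $|a_m|^2$, $|b_m|^2$, $\bar a_m b_m$, $a_m\bar b_m$ (with $x_k=(a_k,b_k)^T$) in the polynomial identity
\[
c\prod_{k}\bar a_k b_k + \bar c\prod_k a_k\bar b_k \;=\; \sum_i \lambda_i \prod_k |u_{ik}^* x_k|^2,
\]
obtaining $\sum_i \lambda_i|\alpha_i|^2 T_i = \sum_i \lambda_i|\beta_i|^2 T_i = 0$ together with
\[
2\sum_i \lambda_i \mathrm{Re}(\alpha_i\bar\beta_i)\, T_i = \E^{I'J'}(c),\qquad 2\sum_i \lambda_i \mathrm{Im}(\alpha_i\bar\beta_i)\, T_i = \E^{I'J'}(-\sqrt{-1}c),
\]
where $I'=(1,\ldots,1)$, $J'=(2,\ldots,2)$ denote the extremal labels in the $(m-1)$-slot space.

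Let $\kappa_i=\lambda_i|\alpha_i|^2$, $\tau_i=\lambda_i|\beta_i|^2$, $\rho_i=2\lambda_i\mathrm{Re}(\alpha_i\bar\beta_i)$, $\sigma_i=2\lambda_i\mathrm{Im}(\alpha_i\bar\beta_i)$ in $\re^r$, and define the real-linear map $\Phi(\mu):=\sum_i \mu_i T_i$. Then $\kappa,\tau\in\ker\Phi$ and $\Phi(a\rho+b\sigma+s\kappa+t\tau)=\E^{I'J'}\bigl((a-b\sqrt{-1})c\bigr)$ for every $(a,b,s,t)\in\re^4$. When $(a,b)\neq(0,0)$ the image is a nonzero basis tensor, so the inductive hypothesis forces $|\mathrm{supp}(\mu)|\ge 2(m-1)$ for $\mu:=a\rho+b\sigma+s\kappa+t\tau$. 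The crux is then, for arbitrary $i_0\ne i_1$, to find $(a,b,s,t)$ with $(a,b)\ne(0,0)$ making $\mu_{i_0}=\mu_{i_1}=0$. These two equations cut out a subspace of dimension at least $2$ in $\re^4$, and I would check that it is not contained in $\{a=b=0\}$: were it so, the $(\kappa,\tau)$-entries of both rows of the system would vanish, giving $\kappa_{i_0}=\tau_{i_0}=\kappa_{i_1}=\tau_{i_1}=0$ and hence $u_{i_0 m}=u_{i_1 m}=0$, contrary to the normalization. A valid $\mu$ therefore exists with $2(m-1)\le|\mathrm{supp}(\mu)|\le r-2$, yielding $r\ge 2m$.

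The main technical obstacle I anticipate is the extraction of the last two identities: the coefficients of $\bar a_m b_m$ and $a_m\bar b_m$ on the left-hand side of the polynomial identity above are individually non-Hermitian $(m-1)$-slot tensors, each with a single nonzero entry, so they must be paired via a real/imaginary combination to recover the Hermitian basis tensors $\E^{I'J'}(c)$ and $\E^{I'J'}(-\sqrt{-1}c)$ (equivalently, these two identities come from contracting $\E^{IJ}(c)$ in the $m$-th slot against $v=e_1+e_2$ and $v=e_1+\sqrt{-1}e_2$). Once that bookkeeping is in place, the induction is propelled by the fact that a whole $2$-parameter family $\{\E^{I'J'}(zc):z\in\cpx^*\}$ of nonzero basis tensors lies in the image of $\Phi$; combined with the two kernel directions $\kappa,\tau$, this supplies exactly enough freedom to kill two coordinates of $\mu$ while keeping $\Phi(\mu)$ nonzero, giving the required $+2$ over the inductive $2(m-1)$ bound.
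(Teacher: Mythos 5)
Your proposal is correct, and it takes a genuinely different route from the paper's. The paper also inducts on $m$ but by contradiction: assuming $r\le 2m-1$, it writes $\E^{IJ}(c)=\sum_i \B_i\otimes U_i$ with $U_i=u_i^m\otimes\overline{u_i^m}$ and uses the inductive bound on the (necessarily basis) tensors $\B_i$ to force $\dim\mathrm{span}\{U_i\}\le 2$; by symmetry every slot then carries, up to scaling, at most two distinct vectors. It next builds dual covectors $w^j$ for each slot and contracts $\E^{IJ}(c)$ against them to produce a rank-one tensor supported only at the two extremal index tuples, from which a contradiction follows. Your argument discards the multi-slot structural analysis and the contraction step. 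You peel off a single slot, encode the four coefficient constraints in $\kappa,\tau,\rho,\sigma\in\re^r$, and observe that the real-linear map $\Phi(\mu)=\sum_i\mu_i T_i$ kills $\kappa,\tau$ while sending $\mathrm{span}\{\rho,\sigma\}$ onto the full two-real-parameter family $\{\E^{I'J'}(zc):z\in\cpx\}$ of nonzero $(m-1)$-slot basis tensors. The dimension count $4=2+2$ is then precisely enough to zero two coordinates of $\mu$ while keeping $\Phi(\mu)\ne 0$, giving $r-2\ge |\mathrm{supp}(\mu)|\ge \hrank\E^{I'J'}(zc)=2(m-1)$ directly, with no contradiction ansatz. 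The paper's route yields extra structural information about minimal decompositions (two vectors per slot, and the explicit obstruction at the positions $(1,\ldots,1)$ and $(2,\ldots,2)$); yours is shorter and more purely linear-algebraic, needing only one slot and the $\re^4$-kernel argument.

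Two small points to tighten. First, you implicitly need $r\ge 2$ before picking $i_0\ne i_1$; this is cheap — $r=0$ is impossible since $\E^{IJ}(c)\ne 0$, and $r=1$ would give $\kappa_1 T_1=\tau_1 T_1=0$ with $T_1\ne 0$ (because $\Phi(\rho)=\E^{I'J'}(c)\ne 0$), forcing $u_{1m}=0$ against your normalization — but it should be stated. Second, the sentence "the $(\kappa,\tau)$-entries of both rows of the system would vanish" is compressed: the precise step is that a $\ge 2$-dimensional solution subspace of $\re^4$ contained in the $2$-plane $\{a=b=0\}$ must coincide with that plane, and only then do the last two columns of the $2\times 4$ coefficient matrix vanish, giving $\kappa_{i_0}=\tau_{i_0}=\kappa_{i_1}=\tau_{i_1}=0$.
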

\begin{proof}
The decomposition~\reff{rkHTD:E1122c} implies
$\hrank(\E^{IJ}(c)) \leq 2m$, so we only need to show
$\hrank(\E^{IJ}(c)) \geq 2m$.
We prove it by induction on $m$.

When $m=1$, $\E^{(12)}(c)$ is a Hermitian matrix of rank $2$
and the conclusion is clearly true.
Suppose the conclusion holds for $m=1,2,\ldots,k$.
Assume to the contrary that for $m=k+1$, $r:=\hrank(\E^{IJ}(c))\le 2m-1=2k+1$
and $\E^{IJ}(c)$ has the Hermitian decomposition (for nonzero vectors $u_i^j$):
\[
 \E^{IJ}(c) = {\sum}_{i=1}^{r}\lambda_i[u^1_i,\ldots,u^{k+1}_i]_\oh.
\]
Let $\A_i = \lambda_i[u^1_i,\ldots,u^{k}_i]_\oh$,
$U_i = u^{k+1}_i \otimes \overline{u^{k+1}_i} $,
then $\E^{IJ}(c)$ can be rewritten as (after a reordering of tensor products)
\[
 \E^{IJ}(c) = {\sum}_{i=1}^{r} \A_i\otimes U_i.
\]
Let $p$ be the dimension of $\mbox{span}\{U_1,\ldots,U_r\}$
and one can generally assume $\{U_1,\ldots,U_p\} $ is linearly independent.
Then $U_j = \sum_{s=1}^p \alpha^j_s U_s$, $j>p$,
for some real coefficients $\alpha^j_s$,
since each $U_i$ can be viewed as a Hermitian matrix.
So we can rewrite that
\[
\E^{IJ}(c)
 = {\sum}_{i=1}^p \B_i\otimes U_i
\quad \mbox{where} \quad
\B_i :=\A_i+ {\sum}_{j=p+1}^r \alpha_i^j \A_j .
\]
Each $\B_i$ is a Hermitian tensor of order $2k$, and $\hrank(\B_i)\le r-p+1 $.
For two labels $I',J' \in \N^k$, consider the matrix
\[
M^{I'J'}:=
\bbm
(\E^{IJ}(c))_{(I',1)(J',1)}  & (\E^{IJ}(c))_{(I',1)(J',2)} \\
(\E^{IJ}(c))_{(I',2)(J',1)}  & (\E^{IJ}(c))_{(I',2)(J',2)}
\ebm = \sum_{i=1}^p (\B_i)_{I'J'} U_i .
\]
Note that $M^{I'J'} \neq 0 $
if and only if $I'=(1\cdots 1),J'=(2\cdots 2) $ or
$I'=(2\cdots 2),J'=(1\cdots 1)$.
Since $U_1,\ldots,U_p$ are linearly independent,
$((\B_1)_{I'J'}, \ldots, (\B_p)_{I'J'}) \ne 0$
if and only if $I'=(1\cdots 1),J'=(2\cdots 2) $ or
$I'=(2\cdots 2),J'=(1\cdots 1)$. So each nonzero
$\B_i$ is also a canonical basis tensor in $\bC^{[2,\ldots,2]} $.
By induction, we have
\[
  r-p+1 \ge \hrank(\B_i) \ge 2k, \qquad p\le r+1-2k \le 2 .
\]

By the same argument, we can show that the rank of the set
$V_j := \big\{u^{j}_i\otimes \overline{u^{j}_i} \big\}_{i=1}^{r} $ is at most $2$,
for all $j=1,\ldots,m$. If the rank of $V_j$ is $2$,
then there exists $ t_j \in [r]$ such that
$\{u^{j}_1\otimes \overline{u^{j}_1},u^{j}_{t_j}\otimes \overline{u^{j}_{t_j}}\} $
is linearly independent. If the rank of $V_j$ is $1$,
we let $t_j: = 1$. Thus
$u^{j}_i= u^{j}_1$ or $u^{j}_i=u^{j}_{t_j} $ for each $i=1,\ldots,r$.
For each $j$, there exists $w^j$ such that $(w^j)^T\overline{u^{j}_1}=1$, and $ (w^j)^T\overline{u^{j}_{t_j}}=0 $ if $t_j > 1$.
Then, consider the multilinear matrix-tensor product
\[
 \T:= (I_2,\ldots,I_2, (w^1)^T,\ldots, (w^{k+1})^T ) \times \E^{IJ}(c)
 = \lambda_1 u^{1}_1 \otimes \cdots \otimes u^{k+1}_1 \in \bC^{2\times \cdots \times 2}.
\]
When $(s_1\cdots s_{k+1}) \neq (1,\ldots,1) $ or $(2,\ldots, 2) $, we have
\[
	\T_{s_1\cdots s_{k+1}}= \sum_{j_1,\ldots, j_{k+1} = 1, 2}(w^1)_{j_1} \cdots (w^{k+1})_{j_{k+1}} (\E^{IJ}(c))_{s_1\ldots s_{k+1} j_1 \ldots j_{k+1}} = 0.
\]
So $\T$ has at most two nonzero entries,
which must be $\T_{1\cdots 1}$ and/or $\T_{2\cdots 2}$:
\[
\baray{l}
\T_{1\cdots 1} = (\E^{IJ}(c))_{(1\cdots 1)(2\cdots 2)} (w^1)_2\cdots (w^{k+1})_2
= c (w^1)_2\cdots (w^{k+1})_2, \\
\T_{2\cdots 2} = (\E^{IJ}(c))_{(2\cdots 2)(1\cdots 1)} (w^1)_1\cdots (w^{k+1})_1
= \overline{c}(w^1)_1\cdots (w^{k+1})_1.
\earay
\]
Since $\T$ is rank $1$, only one of $\T_{1\cdots 1},\T_{2\cdots 2} $
is nonzero, which is also the unique nonzero entry of $\T$.
Without loss of generality, assume $\T_{1\cdots 1}\neq 0,\T_{2\cdots 2}=0 $.
The fact that $(\T)_{1\cdots 1} $ is the only one nonzero entry implies
$u^j_1 = \mu_j e_1,j=1\cdots k+1$ for some $0\neq  \mu_j \in \bC $.
The equation $(w^j)^T\overline{u^j_1} = \overline{\mu}_j(w^j)_1 =1$
implies that $(w^j)_1 \neq 0$,  so
$\T_{2\cdots 2}=\overline{c}(w^1)_1 \cdots (w^{k+1})_1\neq 0$.
But this contradicts $\T_{2 \ldots 2} = 0$,
hence $\hrank(\E^{IJ}(c))\ge 2m$.
\end{proof}

Ranks of basis tensors $\E^{IJ}(c)$ for general dimensions
are given as follows.

\begin{theorem} \label{thm:basisHR}
Assume $n_1,\ldots, n_m \geq 2$,
$I={(i_1,\ldots,i_m)},J={(j_1,\ldots,j_m)}$, and $c \ne 0$.
If $I=J$, then $\hrank \E^{IJ}(c) = 1$;
if $I \ne J$, then $\hrank \E^{IJ}(c) = 2d$
where $d$ is the number of nonzero entries of $I-J$.
\end{theorem}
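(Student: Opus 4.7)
The plan is to chain two dimensional reductions. Lemma~\ref{thm: basis rank dim 2} reduces to the case $n_1 = \cdots = n_m = 2$, and Proposition~\ref{prop:1122 rank} pins down the Hermitian rank in the specific subcase $I = (1,\ldots,1), J = (2,\ldots,2)$. All that remains is to interpolate between these two facts and handle the diagonal case.

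When $I = J$, Lemma~\ref{thm: basis rank dim 2} gives $\hrank \E^{IJ}(c) = \hrank \E^{I'I'}(c)$ with $I' = (1,\ldots,1)$. The Hermitian condition forces $c = \overline{c} \in \re$, so $\E^{I'I'}(c) = c \cdot [e_1,\ldots,e_1]_\oh$ has Hermitian rank exactly $1$ for $c \ne 0$.

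When $I \ne J$, let $d \geq 1$ be the number of coordinates where $I$ and $J$ differ. Lemma~\ref{thm: basis rank dim 2} reduces to $\E^{I'J'}(c) \in \bC^{[2,\ldots,2]}$ with $I' = (1,\ldots,1)$ and $J'$ having exactly $d$ entries equal to $2$. Since a permutation of modes is a unitary congruent transformation and preserves Hermitian rank, I may assume the $d$ twos occupy the first $d$ slots, i.e.\ $J' = (2,\ldots,2,1,\ldots,1)$ with $d$ twos. Write $\hat{I} := (1,\ldots,1)$ and $\hat{J} := (2,\ldots,2)$, both $d$-tuples; then $\E^{\hat{I}\hat{J}}(c) \in \bC^{[2,\ldots,2]}$ on only $d$ modes has Hermitian rank $2d$ by Proposition~\ref{prop:1122 rank}. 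For the upper bound $\hrank \E^{I'J'}(c) \leq 2d$, I would extend any length-$2d$ Hermitian decomposition $\E^{\hat{I}\hat{J}}(c) = \sum_{i=1}^{2d} \lambda_i [u_i^1,\ldots,u_i^d]_\oh$ by appending $e_1 \in \cpx^2$ in the trailing $m - d$ modes, yielding
\[
\E^{I'J'}(c) \,=\, {\sum}_{i=1}^{2d} \lambda_i \,[u_i^1,\ldots,u_i^d, e_1,\ldots,e_1]_\oh .
\]
For the lower bound, I would apply the rectangular multilinear contraction $\Phi := (I_2,\ldots,I_2, e_1^T,\ldots,e_1^T, I_2,\ldots,I_2, e_1^T,\ldots,e_1^T) \times \cdot$, with $e_1^T \in \cpx^{1 \times 2}$ occupying the last $m - d$ slots of each half. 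It sends any rank-$1$ Hermitian term $\lambda [u^1,\ldots,u^m]_\oh$ to $\lambda \prod_{k=d+1}^{m} |(u^k)_1|^2 \,[u^1,\ldots,u^d]_\oh$, a Hermitian rank-$1$ term (possibly zero), so $\Phi$ cannot increase Hermitian rank; and a direct entrywise check (all nonzero entries of $\E^{I'J'}(c)$ have their last $m - d$ indices in both halves equal to $1$) gives $\Phi(\E^{I'J'}(c)) = \E^{\hat{I}\hat{J}}(c)$. Therefore $\hrank \E^{I'J'}(c) \ge 2d$, matching the upper bound.

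All substantive work is packed into Proposition~\ref{prop:1122 rank}; the rest is essentially bookkeeping. The only mildly delicate step will be verifying that the rectangular contraction $\Phi$ does not increase the Hermitian rank, which follows from the standard observation that multilinear maps send rank-$1$ Hermitian terms to rank-$1$ Hermitian terms (with real scalar weights still real, since $|(u^k)_1|^2 \in \re$).
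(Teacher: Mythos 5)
Your proof is correct and follows the same overall strategy as the paper (reduce to $n_k=2$ via Lemma~\ref{thm: basis rank dim 2}, then to the case $I=(1,\ldots,1)$, $J=(2,\ldots,2)$, then invoke Proposition~\ref{prop:1122 rank}). The main place where you differ is that you spell out the mode-reduction step. After Lemma~\ref{thm: basis rank dim 2} one is left with $\E^{I'J'}(c) \in \bC^{[2,\ldots,2]}$ on $m$ modes with $J'$ having $d$ twos and $m-d$ trailing ones, and the paper simply asserts that this has the same Hermitian rank as the $d$-mode tensor $\E^{\hat I\hat J}(c)$; you actually justify this with a pair of arguments, one padding with $e_1$'s for $\hrank \E^{I'J'}(c) \le 2d$ and one contracting the trailing modes with $e_1^T$ for $\hrank \E^{I'J'}(c) \ge 2d$. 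The contraction argument is right: the map sends $\lambda[u^1,\ldots,u^m]_\oh$ to $\lambda\prod_{k>d}|(u^k)_1|^2\,[u^1,\ldots,u^d]_\oh$, a Hermitian rank-$\le 1$ term with a real coefficient, so Hermitian rank cannot increase, and the entrywise check that $\Phi(\E^{I'J'}(c)) = \E^{\hat I\hat J}(c)$ goes through. This is a genuine improvement in rigor over the paper's one-line assertion.

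One small slip worth flagging: you write that ``a permutation of modes is a unitary congruent transformation.'' It is not. A multilinear congruent transformation $(Q_1,\ldots,Q_m)\times_{cong}$ acts within each mode by a matrix $Q_k$ and cannot swap modes. Nevertheless the conclusion you need, namely that permuting the modes of a Hermitian tensor preserves Hermitian rank, is true for the simpler reason that a permutation $\sigma$ of $[m]$ sends any length-$r$ decomposition $\sum_i \lambda_i [u_i^1,\ldots,u_i^m]_\oh$ bijectively to the length-$r$ decomposition $\sum_i \lambda_i [u_i^{\sigma^{-1}(1)},\ldots,u_i^{\sigma^{-1}(m)}]_\oh$ of the permuted tensor (and here all modes have the same dimension $2$, so the permutation is well-defined). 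With this correction the proof is complete.
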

\begin{proof}
When $I=J$, $\E^{IJ}(c)$ is a Hermitian tensor only if $c$ is real, and
$\E^{II}(c) = c [e_{i_1},\ldots,e_{i_m}]_\oh$.
So, $\hrank \E^{II}(c) =1$.
When $I \ne J$, we can generally assume
$i_k\neq j_k$ for $k = 1, \ldots, d$,
and $i_k=j_k$ for $k = d+1, \ldots, m$. By Lemma~\ref{thm: basis rank dim 2},
$\E^{IJ}(c)$ has the same Hermitian rank as $\E^{I'J'}(c)$,
for $I'=(1,\ldots,1)$ and  $J'=(2,\ldots,2, 1,\dots,1)$
(the first $d$ entries of $J'$ are $2$'s).
%
%
Let $I_1 =(1,\ldots, 1), I_2 =(2, \ldots, 2)$,
where $1,2$ are repeated for $d$ times.
Then $\hrank \E^{I'J'}(c) = \hrank \E^{I_1 J_1}(c) $.
By Proposition~\ref{prop:1122 rank}, we know
$
 \hrank \E^{IJ}(c) = \hrank \E^{I_1 J_1}(c) = 2d.
$
\end{proof}

The following is an example of Hermitian rank decompositions for basis tensors.

\begin{example} \label{Ex:Herm Decom m=2}
For $I=(1,2)$, $J=(3,4)$ and $c \ne 0$,
the basis tensor $\E^{(12)(34)}(c)\in \mathbb{C}^{[4,4]}$
has the Hermitian rank $4$, with the
following Hermitian rank decomposition
(in the following $i :=\sqrt{-1}$) {\scriptsize
\[
\frac{1}{4} \left[ \begin{pmatrix}
            c\\0\\ 1 \\ 0
            \end{pmatrix},
            \begin{pmatrix}
            0\\1 \\0 \\ 1
            \end{pmatrix} \right]_\oh
+
          \frac{1}{4}  \left[ \begin{pmatrix}
            c\\0\\ -1 \\ 0
            \end{pmatrix},
            \begin{pmatrix}
            0\\1 \\0 \\ -1
            \end{pmatrix} \right]_\oh
-
          \frac{1}{4}  \left[ \begin{pmatrix}
            c\\0\\ i \\ 0
            \end{pmatrix},
            \begin{pmatrix}
            0\\1 \\0 \\ i
            \end{pmatrix} \right]_\oh
-
           \frac{1}{4} \left[ \begin{pmatrix}
            c\\0\\ -i \\ 0
            \end{pmatrix},
            \begin{pmatrix}
            0\\1 \\0 \\ -i
            \end{pmatrix} \right]_\oh.
\]}
\end{example}

\subsection{Basic properties of Hermitian decompositions}
\label{ssc:Hrank}

In some occasions, a Hermitian tensor may be given by
a Hermitian decomposition.
One wonders whether that is a rank decomposition or not.
This question is related to the classical Kruskal theorem \cite{Kru77,SB00}.
For a set $S$ of vectors, its {\it Kruskal rank},
denoted as $\mathit{k}_S$, is the maximum number $k$
such that every subset of $k$ vectors in $S$
is linearly independent.

\begin{prop} \label{th:kruskal}
Let $\cH = \sum_{j=1}^r \lmd_j [u_j^{1}, \ldots, u_j^{m}]_\oh$
be a Hermitian tensor, with $0 \ne \lmd_j \in \re $ and $m>1$.
For each $i=1,\ldots, m$, let $U_i := \{u_1^{i}, \ldots, u_r^{i} \}$. If
\be \label{sum:kS>=r+m}
\mathit{k}_{U_1} + \cdots + \mathit{k}_{U_m} \geq r+m,
\ee
then $\text{hrank}(\cH) = r$ and
the Hermitian rank decomposition of $\mH$ is essentially unique, i.e.,
it is unique up to permutation and scaling of decomposing vectors.
\end{prop}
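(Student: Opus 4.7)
The plan is to reduce this to the classical Kruskal uniqueness theorem, applied to $\mathcal{H}$ viewed as a tensor of order $2m$. Expanding each Hermitian rank-$1$ term, the given decomposition can be rewritten as the ordinary CP decomposition
\[
\mathcal{H} = \sum_{j=1}^{r} \lambda_j\, u_j^1 \otimes \cdots \otimes u_j^m \otimes \overline{u_j^1} \otimes \cdots \otimes \overline{u_j^m},
\]
whose $2m$ factor sets are $U_1,\ldots,U_m,\overline{U_1},\ldots,\overline{U_m}$. Complex conjugation preserves $\mathbb{C}$-linear dependence relations, so $\mathit{k}_{\overline{U_i}} = \mathit{k}_{U_i}$ for every $i$, and the total Kruskal sum over all $2m$ factor sets equals $2(\mathit{k}_{U_1}+\cdots+\mathit{k}_{U_m})$.

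Next, I would invoke Kruskal's theorem for order-$2m$ CP decompositions, which requires $2(\mathit{k}_{U_1}+\cdots+\mathit{k}_{U_m}) \geq 2r + (2m-1)$. Since the left side is even and the right side is odd, this inequality is integer-equivalent to $\mathit{k}_{U_1}+\cdots+\mathit{k}_{U_m} \geq r+m$, which is exactly the hypothesis \reff{sum:kS>=r+m}. Kruskal then yields two conclusions: (i) $\rank(\mathcal{H}) = r$ as an ordinary CP rank, and (ii) the CP rank decomposition is unique up to a common permutation of the summands and scaling of the factor vectors. Since every Hermitian decomposition is in particular a CP decomposition, $\hrank(\mathcal{H}) \geq \rank(\mathcal{H}) = r$, and the given length-$r$ Hermitian decomposition gives the reverse inequality, proving $\hrank(\mathcal{H})=r$.

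It then remains to promote the CP uniqueness to Hermitian uniqueness. Suppose $\mathcal{H} = \sum_{j=1}^{r}\mu_j [v_j^{1},\ldots,v_j^{m}]_{\otimes h}$ is another Hermitian rank decomposition. Viewed as CP decompositions of the $2m$-tensor, Kruskal's theorem supplies a \emph{single} permutation $\sigma$ of $[r]$ and nonzero complex scalars so that $v_j^i = \alpha_j^i\, u_{\sigma(j)}^i$ for $i=1,\ldots,m$; the conjugate slots automatically give $\overline{v_j^i} = \overline{\alpha_j^i}\,\overline{u_{\sigma(j)}^i}$, so no incompatibility arises. Matching the overall scalar on each summand then forces $\mu_j\prod_{i=1}^{m}|\alpha_j^i|^2 = \lambda_{\sigma(j)}$ (in particular $\mu_j$ is automatically real), which is the usual permutation-and-scaling ambiguity of a Hermitian rank decomposition.

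The main subtlety I expect is making sure the parity reduction of Kruskal's inequality to the stated hypothesis is tight, and that the conjugate factor sets do not force an independent permutation that would weaken uniqueness; the first is settled by the even/odd parity trick, and the second is automatic because Kruskal's conclusion produces one common permutation $\sigma$ of the $r$ summands, so the constraint imposed by the conjugate slots coincides with that imposed by the original slots rather than adding new ones.
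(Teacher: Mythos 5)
Your proof is correct and takes essentially the same route as the paper's: rewrite the Hermitian decomposition as an ordinary CP decomposition of an order-$2m$ tensor with factor sets $U_1,\ldots,U_m,\overline{U_1},\ldots,\overline{U_m}$, observe that $\mathit{k}_{\overline{U_i}}=\mathit{k}_{U_i}$, and note the parity coincidence that makes $2\sum_i \mathit{k}_{U_i} \geq 2r+2m-1$ integer-equivalent to the hypothesis \eqref{sum:kS>=r+m}, then invoke the classical Kruskal theorem. The paper stops there with a citation, whereas you additionally spell out why CP uniqueness transfers to Hermitian uniqueness (a single permutation across all $2m$ slots, conjugate slots imposing no independent constraint, and the scalar bookkeeping forcing the $\lambda_j$'s to match); this is a legitimate and helpful elaboration of a step the paper leaves implicit.
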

\begin{proof}
Note that $\mathit{k}_{U_i} = \mathit{k}_{ \overline{U_i} }$,
where $\overline{U_i} := \{\overline{u_j^i}, \ldots, \overline{u_j^i} \}$.
The rank condition~\reff{sum:kS>=r+m} is equivalent to that
\[
\mathit{k}_{U_1} + \cdots + \mathit{k}_{U_m} +
\mathit{k}_{\overline{U_1}} + \cdots + \mathit{k}_{\overline{U_m}} \geq 2r+2m-1.
\]
The conclusion is then implied by the classical Kruskal type theorem~\cite{Kru77,SB00}
(or see Theorems~12.5.3.1 and 12.5.3.2 in \cite{Land12}).
\end{proof}

For instance, for the following vectors
\[
u_1 =(1,1,1), \, u_2=(1,1,0), \,
u_3=(1,0,1), \, u_4=(0,1,1),
\]
the sum $\sum_{i=1}^4 [u_i, u_i, u_i]_\oh$
has Hermitian rank $4$, by Proposition~\ref{th:kruskal}.
This is because, for $U=\{u_1,u_2,u_3,u_4\}$, the Kruskal rank
$k_U=3$, $m=3$ and $3k_U=9 \ge 4+m=7$.

A basic question is how to compute Hermitian rank decompositions.
This is generally a challenge.
When Hermitian ranks are small,
we can apply the existing methods for
canonical polyadic decompositions (CPDs) for cubic tensors.
For convenience, let
\be \label{val:N1N2N3}
N_1 := n_1 \cdots n_m, \quad N_3:=\min\{ n_1, \ldots, n_m \},  \quad
N_2= N_1/N_3.
\ee
Up to a permutation of dimensions,
we can assume $n_m$ is the smallest, i.e., $N_3 = n_m$.
A Hermitian tensor can be flattened to a cubic tensor.
Define the linear flattening mapping
$\psi: \bC^{[n_1,\ldots,n_m]} \to \cpx^{N_1 \times N_2 \times N_3}$ such that
\be  \label{df:map:varphi}
\psi( [u^1, \ldots, u^m]_\oh )   \, =  \,
(u^1 \otimes  \cdots \otimes  u^m) \otimes
(\overline{u^1}  \otimes \cdots \otimes \overline{u^{m-1}}) \otimes \overline{u^m}.
\ee
Then
$\cH = \sum_{j=1}^r \lmd_j [u^1_{j}, \ldots, u^m_{j}]_\oh$
if and only if
\be   \label{td:varphi(H)}
\psi(\cH) ={\sum}_{j=1}^r \lmd_j \, a_j \otimes b_j \otimes c_j
\ee
where
$
a_j = u_j^{1} \otimes \cdots \otimes u_j^{m}, \,
b_j = \overline{u_j^{1}} \otimes \cdots \overline{u_j^{m-1}}, \,
c_j = \overline{u_j^{m}}.
$
The decomposition \reff{td:varphi(H)}
can be obtained by computing the CPD for $\psi(\cH)$,
if the rank decomposition of $\psi(\cH)$ is unique.
We refer to \cite{BBCM13,BreVan18,dLMV04,dLa06,tensorlab} for computing CPDs.

\begin{example}
Consider the tensor $\A \in \cpx^{[3,3]} $ such that
$
\A_{i_1i_2j_1j_2} = i_1j_1+i_2j_2
$
for all $i_1,i_2,j_1,j_2$ in the range.
A Hermitian decomposition for $\A$ is
\[  
\A = \left[
 \begin{pmatrix}
	1 \\ 2 \\ 3
\end{pmatrix},
\begin{pmatrix}
	1 \\ 1 \\ 1
\end{pmatrix} \right]_\oh
+
\left[\begin{pmatrix}
	1 \\ 1 \\ 1
\end{pmatrix},
\begin{pmatrix}
	1 \\ 2 \\ 3
\end{pmatrix} \right]_\oh .
\]
By Proposition~\ref{th:kruskal}, the Hermitian rank is $2$. 
\end{example}

The rank of a Hermitian matrix does not change after a nonsingular
congruent transformation. The same conclusion holds for Hermitian tensors.
We refer to \reff{prod:cong} for multi-linear congruent transformations.

\begin{prop}  \label{prop:rankQA=A}
Let $Q_k \in \cpx^{n_k \times n_k}$
be nonsingular matrices, for $k=1,\ldots,m$. Then, for each
$\mH \in \bC^{[n_1,\ldots,n_m]}$, the congruent
transformation $(Q_1,\ldots, Q_m) \times_{cong} \mH$
has the same Hermitian rank as $\mH$ does.
\end{prop}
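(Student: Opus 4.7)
The plan is to reduce the statement to the observation that the congruent transformation sends rank-$1$ Hermitian tensors to rank-$1$ Hermitian tensors, and that nonsingular congruent transformations are invertible with inverses of the same type.

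First I would verify the rank-$1$ action: for any vectors $v_1, \ldots, v_m$,
\[
(Q_1, \ldots, Q_m) \times_{cong} [v_1, \ldots, v_m]_\oh = [Q_1 v_1, \ldots, Q_m v_m]_\oh.
\]
This follows immediately from the definition \reff{prod:cong} together with the multilinearity of the tensor product: applying $(Q_1, \ldots, Q_m, \overline{Q_1}, \ldots, \overline{Q_m}) \times$ to $v_1 \otimes \cdots \otimes v_m \otimes \overline{v_1} \otimes \cdots \otimes \overline{v_m}$ yields $(Q_1 v_1) \otimes \cdots \otimes (Q_m v_m) \otimes \overline{Q_1 v_1} \otimes \cdots \otimes \overline{Q_m v_m}$, since $\overline{Q_k} \, \overline{v_k} = \overline{Q_k v_k}$.

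Next, given any Hermitian decomposition $\mH = \sum_{i=1}^r \lambda_i [u_i^1, \ldots, u_i^m]_\oh$ with $\lambda_i \in \re$, applying the congruent transformation and using linearity produces the Hermitian decomposition
\[
(Q_1, \ldots, Q_m) \times_{cong} \mH = {\sum}_{i=1}^r \lambda_i [Q_1 u_i^1, \ldots, Q_m u_i^m]_\oh,
\]
of length $r$. Taking $r = \hrank(\mH)$ gives $\hrank\bigl((Q_1, \ldots, Q_m) \times_{cong} \mH\bigr) \leq \hrank(\mH)$.

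For the reverse inequality, I would check directly that the inverse congruent transformation works in the nonsingular case: since $Q_k^{-1} Q_k = I_{n_k}$ and $\overline{Q_k^{-1}} \, \overline{Q_k} = \overline{I_{n_k}} = I_{n_k}$, one has
\[
(Q_1^{-1}, \ldots, Q_m^{-1}) \times_{cong} \Bigl((Q_1, \ldots, Q_m) \times_{cong} \mH\Bigr) = \mH.
\]
Applying the length-preserving inequality to $\mH' := (Q_1, \ldots, Q_m) \times_{cong} \mH$ with the nonsingular matrices $Q_k^{-1}$ gives $\hrank(\mH) \leq \hrank(\mH')$, which closes the argument. There is no real obstacle here; the only point to double-check is the compatibility of complex conjugation with matrix inversion, which is automatic.
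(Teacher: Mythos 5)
Your proof is correct and follows essentially the same route as the paper's: observe that the congruent transformation maps rank-$1$ Hermitian tensors to rank-$1$ Hermitian tensors of the same form, so any length-$r$ Hermitian decomposition of $\mH$ transforms to one of $\mF$, and then use invertibility (via $(Q_1^{-1},\ldots,Q_m^{-1})\times_{cong}$) for the reverse inequality. The paper compresses this into a single ``if and only if'' justified by nonsingularity; you simply spell out the two inequalities explicitly, which is the same argument.
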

\begin{proof}
Let $\mF := (Q_1,\ldots, Q_m) \times_{cong} \mH$, then
$\mH ={\sum}_{i=1}^r \lmd_i [u_i^1,\ldots, u_i^m]_{\oh}$
if and only if
$
\mF ={\sum}_{i=1}^r \lmd_i [Q_1 u_i^1,\ldots, Q_m u_i^m]_{\oh},
$
because each $Q_i$ is nonsingular.
So $\hrank(\mH) = \hrank(\mF)$.
\end{proof}

\subsection{Border, expected, generic and typical ranks}
\label{ssc:hbr}

There exist classical notions of
border, expected, generic and typical tensor ranks \cite{Land12}.
They all can be similarly defined for Hermitian ranks.
The classical border rank of a tensor $\mA$ is the smallest $r$
such that $\mA_k \to \mA$, where each $\mA_k$ is a rank-$r$ tensor.
The border rank of $\mA$ is denoted as $\brank(\mA)$.
We can similarly define Hermitian border ranks.

\begin{definition}  \label{def:border:HR}
For $\cH \in \bC^{[n_1,\ldots,n_m]}$,
its Hermitian border rank, for which we denote $\hbrank(\cH)$,
is the smallest $r$ such that there is a sequence
$\{ \cH_k \}_{k=1}^\infty \subseteq \bC^{[n_1,\ldots,n_m]}$
such that $\cH_k \to \cH$ and each $\hrank(\cH_k) = r$.
\end{definition}

Like the classical border rank inequality, we also have
\be \label{border<=hrank}
\brank(\cH) \le \hbrank(\cH) \leq \hrank(\cH).
\ee
The strict inequality can occur, as shown in the following example.

\begin{example} \label{exm:bordrank=2}
Consider the Hermitian tensor $\mB$ that is given as
\[
e_2\otimes e_1 \otimes e_1 \otimes e_1
+e_1\otimes e_2 \otimes e_1 \otimes e_1
+e_1\otimes e_1 \otimes e_2 \otimes e_1
+e_1\otimes e_1 \otimes e_1 \otimes e_2.
\]
For each $k>0$, denote the rank-$2$ Hermitian tensor
\[
\mB_k =k([e_1+\frac{e_2}{k},e_1+\frac{e_2}{k}]_\oh-[e_1,e_1]_\oh) .
\]
Since $\mB_k \to \mB $, $\hbrank (\mB) \leq 2$.
This kind of tensors are investigated in \cite{DeSLim08}.
The border rank is less than the cp rank.
The Hermitian flattening matrix of $\mB$ is (see \reff{H=qq*:rk1}) has rank $2$.
So, $\hbrank (\mB) \ge 2$ by Lemma~\ref{lema:matrix rank},
and hence $\hbrank(\mB) = 2$. However, $\hrank(\A)=4$.
Note the decomposition
\[
\mB = \frac{1}{2} \big(
[e_1, e_1+e_2 ]_\oh - [e_1, e_1-e_2 ]_\oh
+[e_1+e_2, e_1 ]_\oh - [e_1-e_2,e_1]_\oh
\big),
\]
so $\hrank (\mB) \leq 4$. On the other hand,
the cp rank of $\mB$ is $4$
(see \cite[\S5]{Comon2008}, \cite[\S4.7]{DeSLim08}),
implying $\hrank (\mB) \geq \rank(\mB)=4$. 
Therefore, $\hrank (\mB) = 4$.
\end{example}

For an integer $r>0$, define the sets of Hermitian tensors
\begin{align}
 \label{set:Yr}
\cY_r &:=  \{\A\in \bC^{[n_1, \ldots, n_m]}:
\, \text{hrank}(\A)\le r\},  \\
\label{set:Zr}
\cZ_r &:= \{\A\in \bC^{[n_1, \ldots, n_m]}:
\, \text{hrank}(\A)=r\}.
\end{align}
Denote by $\mbox{cl}(\cY_r),\mbox{cl}(\cZ_r)$ their closures respectively,
under the Euclidean topology.
We define typical and generic Hermitian ranks as follows.

\begin{definition} \rm
An integer $r$ is called a {\em typical} Hermitian rank
of $\bC^{[n_1, \ldots, n_m]}$ if $\cZ_r$ has positive Lebsgue measure.
The smallest $r$ such that
$\mbox{cl}(\cY_r) = \bC^{[n_1, \ldots, n_m]}$
is called the {\em generic} Hermitian rank of $\bC^{[n_1, \ldots, n_m]}$,
for which we denote $r_g$.
\end{definition}

For  $m>1$ and $n_1, \ldots, n_m > 1$,
does $\bC^{[n_1, \ldots, n_m]}$ have a unique typical Hermitian rank?
If it is not unique, is the set of typical ranks consecutive?
What is the value of the generic rank $r_g$?
These questions are mostly open, to the best of the author's knowledge.
For real tensors,
we refer to~\cite{BerBlekOtt18,BlekTeit15,Blek15}
for typical and generic real tensor ranks.

For each rank-$1$ Hermitian tensor, it holds that
\[
\lmd [u_1, \ldots, u_m]_\oh = \frac{\lmd}{ |c_1|^2 \cdots |c_m|^2 }
[c_1 u_1, \ldots, c_m u_m]_\oh
\]
for all nonzero complex scalars $c_i$. That is,
$\lmd [u_1, \ldots, u_m]_\oh$ is unchanged if
we scale one entry of $u_i$ to be $1$, upon scaling $\lmd$ accordingly.
Let $\mc{W}$ be the set of Hermitian rank-$1$ tensors in
$\bC^{[n_1,\ldots,n_m]}$. Its dimension over $\re$ is
\[
\dim_{\re} \mc{W} = (2n_1-2)+\cdots + (2n_m-2) + 1
 = 2(n_1+\cdots+n_m-m)+1.
\]
The dimension of $\bC^{[n_1,\ldots,n_m]}$ over $\re$ is $(n_1\cdots n_m)^2$.
Therefore, the {\it expected Hermitian rank} of the space
$\bC^{[n_1,\ldots,n_m]}$ is
\begin{equation}
\label{formu:exprank}
\operatorname{exphrank} :=
\left\lceil
\frac{(n_1\cdots n_m)^2}{2(n_1+\cdots+n_m-m)+1}
\right \rceil.
\end{equation}
By a dimensional counting, every typical rank is always greater than or equal to
$\mbox{exphrank}$.
For the matrix case (i.e., $m=1$) and $n_1 > 2$,
the generic rank is $n_1$, which is bigger than
the expected Hermitian rank
$\left\lceil \frac{n_1^2}{2n_1-1} \right \rceil$.
Is this also true for $m>1$?
For what values of $m$ and $n_1, \ldots, n_m $,
does $\operatorname{exprank} = r_g$?
When is $\operatorname{exphrank}$ a typical rank?
These questions are mostly open.

\section{Real Hermitian Tensors}
\label{sc:real}

This section discusses real Hermitian tensors,
i.e., their entries are all real.
The subspace of real Hermitian tensors in
$\bC^{[n_1,\ldots,n_m]}$ is denoted as
\[
\re^{[n_1,\ldots,n_m]} :=
\bC^{[n_1,\ldots,n_m]} \cap
\re^{n_1 \times \cdots \times n_m \times n_1 \times \cdots \times n_m}.
\]
For real Hermitian tensors, we are interested in
their real decompositions.

\begin{definition} \rm  \label{def:real:HD}
A tensor $\cH \in \re^{[n_1,\ldots,n_m]}$ is
called {\em $\re$-Hermitian decomposable} if
\begin{equation} \label{rHD:calA}
\cH = {\sum}_{i=1}^r \lambda_i [u_i^{1}, \ldots, u_i^{m} ]_\oh
\end{equation}
for real vectors $u_i^{j} \in \bR^{n_j}$
and real scalars $\lambda_i \in \bR $.
The smallest such $r$ is called the {\em $\re$-Hermitian rank}
of $\cH$, for which we denote $\hrank_{\re}(\cH)$.
The subspace of $\re$-Hermitian decomposable tensors in
$\re^{[n_1,\ldots,n_m]}$ is denoted as
$\re_D^{[n_1,\ldots,n_m]}$.
\end{definition}

When it exists, \reff{rHD:calA} is called a $\re$-Hermitian decomposition;
if $r$ is minimum, \reff{rHD:calA} is called a $\re$-Hermitian rank decomposition.
Clearly, for all $\cH\in \re_D^{[n_1,n_2]}$,
\be \label{rHR>=cHR}
\hrank_{\re} (\cH)  \geq  \hrank (\cH).
\ee
Not every real Hermitian tensor is $\re$-Hermitian decomposable.
This is very different from the complex case.
We characterize when a tensor
is $\re$-Hermitian decomposable.

\begin{theorem}
\label{Th:real Herm} \rm
A tensor $\A \in \re^{[n_1,\ldots,n_m]}$
is $\re$-Hermitian decomposable, i.e.,
$\A \in \re_D^{[n_1,\ldots,n_m]}$, if and only if
%
%
\be
      \label{Eq:real Herm}
\A_{i_1 \ldots i_m j_1 \ldots j_m}
= \A_{k_1 \ldots k_m l_1 \ldots l_m}
\ee
for all labels such that
$\{i_s, j_s\} = \{k_s, l_s \}$, $s=1,\ldots, m$.
\end{theorem}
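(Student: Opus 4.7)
The plan is to prove both directions directly from the symmetry content of condition \reff{Eq:real Herm} combined with the real spectral theorem applied slotwise.

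For the necessity (``only if''), I would observe that the relation $\{i_s,j_s\}=\{k_s,l_s\}$ for every $s$ is generated by the independent transpositions $i_s\leftrightarrow j_s$, one for each slot. Because $\overline{u_s}=u_s$ whenever $u_s\in\re^{n_s}$, the $(i_1,\ldots,i_m,j_1,\ldots,j_m)$-entry of a real Hermitian rank-$1$ term $\lambda[u_1,\ldots,u_m]_\oh$ equals $\lambda\prod_{s=1}^m (u_s)_{i_s}(u_s)_{j_s}$, which is manifestly invariant under each such swap. Linearity extends this invariance to every element of $\re_D^{[n_1,\ldots,n_m]}$, so \reff{Eq:real Herm} holds.

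For sufficiency (``if''), the idea is to repack $\A$ into a partially-symmetric tensor and then appeal to the real spectral theorem in each slot. Let $\pi$ be the permutation that reorders indices from $(i_1,\ldots,i_m,j_1,\ldots,j_m)$ to $(i_1,j_1,\ldots,i_m,j_m)$, and set $B:=\pi(\A)\in\re^{n_1\times n_1\times \cdots\times n_m\times n_m}$. Condition \reff{Eq:real Herm} translates exactly into $B$ being symmetric under the swap of the two indices in slot $s$, for each $s=1,\ldots,m$. Equivalently, $B$ lies in the subspace $\mc{S}^{n_1}\otimes\cdots\otimes \mc{S}^{n_m}$ of $\re^{n_1\times n_1}\otimes\cdots\otimes\re^{n_m\times n_m}$. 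Since a tensor product of subspaces is spanned by its elementary products, we may write $B=\sum_{t=1}^N M_t^1\otimes\cdots\otimes M_t^m$ with each $M_t^s\in\mc{S}^{n_s}$.

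The last step is to apply the real spectral decomposition $M_t^s=\sum_{j}\mu_{t,j}^s\, v_{t,j}^s(v_{t,j}^s)^T$ with $\mu_{t,j}^s\in\re$ and $v_{t,j}^s\in\re^{n_s}$. Substituting into the decomposition of $B$, undoing the permutation $\pi$, and recognizing that each elementary product of real dyads $v^1(v^1)^T\otimes\cdots\otimes v^m(v^m)^T$ becomes, after the inverse reindexing, the Hermitian rank-$1$ tensor $[v^1,\ldots,v^m]_\oh$ (because real vectors coincide with their complex conjugates), one arrives at a decomposition of the form \reff{rHD:calA} with real scalars and real vectors. The main obstacle is purely notational: one must pin down carefully that the reindexing $\pi$ identifies the slotwise symmetry expressed in \reff{Eq:real Herm} with the defining symmetry of $\mc{S}^{n_s}$. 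Once that identification is in place, the conclusion is an immediate consequence of the real spectral theorem.
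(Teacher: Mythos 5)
Your necessity argument is the same as the paper's: the entry of a real Hermitian rank-one term factors as $\lambda\prod_s (u_s)_{i_s}(u_s)_{j_s}$, is invariant under each slotwise swap $i_s\leftrightarrow j_s$, and this passes to linear combinations. For sufficiency, however, you take a genuinely different route, and it is correct. The paper proceeds by induction on $m$: it slices the order-$2(m+1)$ tensor $\A$ along the last pair of indices into tensors $\mB^{s,t}\in\re^{[n_1,\ldots,n_m]}$, checks that each slice inherits condition \reff{Eq:real Herm}, applies the inductive hypothesis, and reassembles $\A$ via explicit linear maps $\rho_{s,t}$ that extend a rank-one Hermitian tensor by one more slot (using the identity $e_se_t^T+e_te_s^T = \tfrac12\big((e_s+e_t)(e_s+e_t)^T-(e_s-e_t)(e_s-e_t)^T\big)$ under the hood). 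You instead do it in one shot: reindex $\A$ into $B\in\re^{(n_1\times n_1)\times\cdots\times(n_m\times n_m)}$, observe that \reff{Eq:real Herm} says exactly that $B$ is fixed by each slotwise symmetrization projector and hence lies in $\mc S^{n_1}\otimes\cdots\otimes\mc S^{n_m}$, expand $B$ into a finite sum of elementary products of real symmetric matrices, spectrally decompose each factor over $\re$, and undo the reindexing. The one point worth pinning down, which you wave at correctly but do not argue, is that being symmetric in every slot pair is equivalent to lying in the image of the tensor product of the slotwise symmetrizers; this follows because an element fixed by each commuting projector $I\otimes\cdots\otimes P_s\otimes\cdots\otimes I$ is fixed by their composition, whose image is $\bigotimes_s\mc S^{n_s}$, and conversely every element of that image is fixed by each factor. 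Your approach is the more conceptual one and makes transparent the identification $\re_D^{[n_1,\ldots,n_m]}\cong \mc S^{n_1}\otimes\cdots\otimes\mc S^{n_m}$ that the paper only states afterwards as a dimension count; the paper's induction, while longer, is entirely elementary and produces an explicit recursive decomposition algorithm as a byproduct.
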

\begin{proof}
For convenience, denote the labeling tuples:
\[
\imath = ({i_1, \ldots, i_m, j_1, \ldots, j_m}), \,
\jmath = (k_1, \ldots, k_m, l_1, \ldots, l_m).
\]
$``\Rightarrow":$ If $\A$  has a $\re$-Hermitian decomposition
as in \reff{rHD:calA}, then
\[
\A_{\imath}=\sum_{i=1}^r \lambda_i \prod_{s=1}^m
(u_i^s)_{i_s} (u_i^s)_{j_s} =
\sum_{i=1}^r \lambda_i \prod_{s=1}^m
(u_i^s)_{k_s} (u_i^s)_{l_s} = \A_{\jmath}
\]
when $\{i_s, j_s\} = \{k_s, l_s \}$ for all $s=1,\ldots, m$.

\noindent
$``\Leftarrow":$
Assume \reff{Eq:real Herm} holds. We prove the conclusion by
induction on $m$. For $m=2$, i.e., the matrix case,
the conclusion is clearly true because every
real symmetric matrix has a real spectral decomposition.
Suppose the conclusion is true for $m$,
then we show that it is also true for $m+1$.
For $s,t \in [n_{m+1}]$, let $\mB^{s,t}$ be the tensor
in $\re^{[n_1,\ldots,n_m]}$ such that
\[
(\mB^{s,t})_{i_1 \ldots i_m j_1 \ldots j_m} =
(\A)_{i_1 \ldots i_m s j_1 \ldots j_m t}
\]
for all $i_1, \ldots, i_m j_1, \ldots, j_m$ in the range.
The condition~\reff{Eq:real Herm} implies that
$\mB^{s,t} = \mB^{t,s}$ and each $\mB^{s,t}$ is a real Hermitian tensor.
For $s < t$, define the linear map
\[
\rho_{s,t}: \, \re^{[n_1,\ldots,n_m]} \to \re^{[n_1,\ldots,n_m, n_{m+1}]},
\]
\[
[x_1,\ldots,x_m]_{\oh} \mapsto
\frac{1}{2}[x_1,\ldots,x_m,e_s+e_t]_\oh -
\frac{1}{2}[x_1,\ldots,x_m,e_s-e_t]_\oh.
\]
For $s=t$, the linear map $\rho_{s,s}$ is then defined such that
\[
\rho_{s,s} ( [x_1,\ldots,x_m]_{\oh} ) =
[x_1,\ldots,x_m,e_s]_\oh .
\]
One can verify that
$
\mA = \sum_{ 1 \leq s \leq t \leq n_{m+1} }
\rho_{s,t} ( \mB^{s,t} ).
$
By induction, each $\mB^{s,t}$ is $\re$-Hermitian decomposable,
so each $\rho_{s,t} ( \mB^{s,t} )$,
as well as $\mA$, is also $\re$-Hermitian decomposable.
\end{proof}

\begin{example} \label{exmp:Hankel:rk=4}
Consider the real Hermitian tensor $\A\in \re^{[2,2]} $ such that
\[
\A_{ijkl} = i+j+k+l
\]
for all $1\le i,j,k,l\le 2$.
It is a Hankel tensor \cite{NieYe19}.
By Theorem~\ref{Th:real Herm},
it is $\re$-Hermitian decomposable.
In fact, it has the decomposition
\[
    \A = \frac{40-13\sqrt{10}}{20} \big([u_1,e]_\oh+[e,u_1]_\oh \big) +\frac{40+13\sqrt{10}}{20} \big([u_2,e]_\oh+[e,u_2]_\oh \big),
\]
for $u_1 =(\frac{-\sqrt{10}-1}{3},1)$, $u_2 = (\frac{\sqrt{10}-1}{3},1)$.
Clearly, $\hrank_\re(\mA) \leq 4$.
Moreover, $\A$ can be expressed as the limit
\[
\A = \lim_{\eps \to 0} \eps^{-1}
\Big[ (e+\eps f)^{\otimes 4} - e^{\otimes 4}   \Big],
\]
for $f:=(1,2)$.
For this kind of tensors, the cp rank is $4$
(see \cite[\S5]{Comon2008}, \cite[\S4.7]{DeSLim08}).
Therefore, $\hrank_\re(\mA) \ge \rank (\mA) = 4$
and hence $\hrank_\re(\A) = 4$.
\end{example}

Not every basis tensor $\mc{E}^{IJ}(c)$ is $\re$-Hermitian decomposable.
For instance, the basis tensor $\mA = \mathcal{E}^{1122}(1)$
is not, because $\A_{1122}=1\neq 0=\A_{1221}$.

\begin{corollary}
For $I=(i_1, \ldots, i_m)$ and $J = (j_1,\ldots, j_m)$,
the basis tensor $\E^{IJ}(1)$
is $\re$-Hermitian decomposable if and only if
$I-J$ has at most one nonzero entry.
In particular, if $I=J$, then $\hrank_\re \E^{IJ}(1) =1$;
if $I$ and $J$ differs for only one entry,
then $\hrank_\re \E^{IJ}(1) = 2$.
\end{corollary}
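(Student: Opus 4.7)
The plan is to deduce this corollary directly from the characterization in Theorem~\ref{Th:real Herm} together with the rank formula in Theorem~\ref{thm:basisHR}. Recall that $\E^{IJ}(1)$ has only two nonzero entries, both equal to $1$, namely at positions $(i_1,\ldots,i_m,j_1,\ldots,j_m)$ and $(j_1,\ldots,j_m,i_1,\ldots,i_m)$.

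First I would handle the ``only if'' direction by contrapositive. Suppose $I-J$ has two distinct nonzero entries, say at positions $s_1\ne s_2$ with $i_{s_1}\ne j_{s_1}$ and $i_{s_2}\ne j_{s_2}$. Define $(k_1,\ldots,k_m,l_1,\ldots,l_m)$ by keeping $(k_s,l_s)=(i_s,j_s)$ for $s\ne s_2$ and swapping $(k_{s_2},l_{s_2})=(j_{s_2},i_{s_2})$. Then $\{i_s,j_s\}=\{k_s,l_s\}$ for every $s$, but the resulting tuple is neither $(I,J)$ nor $(J,I)$, so its entry in $\E^{IJ}(1)$ is $0$ while $\E^{IJ}(1)_{I,J}=1$. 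This violates the symmetry condition~\reff{Eq:real Herm} of Theorem~\ref{Th:real Herm}, so $\E^{IJ}(1)$ is not $\re$-Hermitian decomposable.

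Next I would establish the ``if'' direction. When $I=J$, we simply have $\E^{II}(1)=[e_{i_1},\ldots,e_{i_m}]_\oh$, which is a real rank-$1$ decomposition; so $\hrank_\re\E^{II}(1)=1$. When $I$ and $J$ differ in exactly one coordinate $s_0$, every label tuple satisfying $\{i_s,j_s\}=\{k_s,l_s\}$ for all $s$ is forced to agree with $(I,J)$ outside position $s_0$ and to take one of the two orderings at $s_0$; thus only the two nonzero positions of $\E^{IJ}(1)$ are matched by the relation, and the required equality $\A_\imath=\A_\jmath$ holds trivially. To exhibit an explicit real decomposition of length $2$, I would apply the real spectral decomposition of the $2\times2$ symmetric block
\[
\tfrac{1}{2}(e_{i_{s_0}}+e_{j_{s_0}})(e_{i_{s_0}}+e_{j_{s_0}})^{T}
-\tfrac{1}{2}(e_{i_{s_0}}-e_{j_{s_0}})(e_{i_{s_0}}-e_{j_{s_0}})^{T},
\]
tensored with the rank-$1$ factor $[e_{i_1},\ldots,\widehat{\cdot},\ldots,e_{i_m}]_\oh$ at the other coordinates. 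This yields $\hrank_\re\E^{IJ}(1)\le 2$. The matching lower bound is immediate from \reff{rHR>=cHR} combined with Theorem~\ref{thm:basisHR}, which gives $\hrank\E^{IJ}(1)=2d=2$ when $d=1$. Hence $\hrank_\re\E^{IJ}(1)=2$.

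There is no real obstacle here; the only care needed is the bookkeeping of indices when constructing the ``mixed swap'' counterexample in the two-difference case, to ensure the produced tuple truly differs from both $(I,J)$ and $(J,I)$. This is guaranteed by having two independent positions at which $I$ and $J$ disagree, so swapping only one of them produces a genuinely new tuple with entry zero.
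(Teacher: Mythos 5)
Your argument is correct and matches the paper's proof essentially line for line: the necessity direction invokes Theorem~\ref{Th:real Herm} (you simply make the violating tuple explicit, which the paper leaves implicit), the $I=J$ case is the same rank-$1$ decomposition, the one-difference case uses the same $\frac{1}{2}(e_{i_k}+e_{j_k})\oplus\frac{-1}{2}(e_{i_k}-e_{j_k})$ splitting at the differing coordinate, and the lower bound comes from $\hrank_\re \geq \hrank = 2d$ via Theorem~\ref{thm:basisHR}. No meaningful differences.
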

\begin{proof}
The necessity direction is a direct consequence of Theorem~\ref{Th:real Herm}.
This is because if there are two distinct $k$ such that $i_k\neq j_k$,
then the condition \reff{Eq:real Herm} cannot be satisfied.
We prove the sufficiency direction
by constructing $\re$-Hermitian decompositions explicitly.
If $I = J$, then
$\E=[e_{i_1},e_{i_2},\ldots,e_{i_m}]_\oh$
and $\hrank_\re \E^{IJ}(1) =1$.
If $I$ and $J$ differs for only one entry, say, $i_k\neq j_k$, then
\[
\E=\frac{1}{2} [e_{i_1},e_{i_2},\ldots,e_{i_k}+e_{j_k},
\cdots e_{i_m}]_\oh - \frac{1}{2}
[e_{i_1},e_{i_2},\ldots,e_{i_k}-e_{j_k},\cdots e_{i_m}]_\oh
\]
and hence $\hrank_\re \E^{IJ}(1) \leq 2$.
Since $\hrank_\re \E^{IJ}(1) \ge \hrank \E^{IJ}(1)= 2$,
we must have $\hrank_\re \E^{IJ}(1) = 2$.
\end{proof}

The major reason for not all real Hermitian tensors are $\re$-Hermitian decomposable
is because of the dimensional difference. That is, the dimension of
$\re_D^{[n_1,\ldots,n_m]}$ is less than that of $\re^{[n_1,\ldots,n_m]}$.
By Theorem~\ref{Th:real Herm}, the dimension of
$\re_D^{[n_1,\ldots,n_m]} $ is equal to the cardinality of the set
$
\{(i_1,\ldots,i_m,j_1,\ldots,j_m) :\, 1 \le i_k \le  j_k \le n_k\}.
$
Thus
\begin{equation}
\dim \re_D^{[n_1,\ldots,n_m]} =
\prod_{k=1}^m \binom{n_k+1}{2} =
\prod_{k=1}^m \frac{ n_k (n_k+1) }{2}.
\end{equation}
However, the dimension of $\re^{[n_1,\ldots,n_m]}$ is
\be
\dim \re^{[n_1,\ldots,n_m]} =
\binom{ N + 1 }{2} , \quad N = n_1 \cdots n_m.
\ee
The dimension of $\re^{[n_1,\ldots,n_m]}$
equals the dimension of $\mc{S}^N$,
the space of $N$-by-$N$ real symmetric matrices.
The dimension of $\re_D^{[n_1,\ldots,n_m]}$
equals the dimension of the tensor product space
$
\mc{S}^{n_1} \otimes \cdots \otimes \mc{S}^{n_m}.
$
If $m>1$ and all $n_i > 1$, then
\be \label{dimRD<dimR}
\dim \re_D^{[n_1,\ldots,n_m]} \, < \,  \dim \re^{[n_1,\ldots,n_m]} .
\ee

Real Hermitian decompositions can also be equivalently expressed
in terms of real polynomials.
Let each $x_i \in \re^{n_i}$ be a real vector variable.
The real decomposition~\reff{rHD:calA} implies that
\be \label{real:H(x,x)=sum:qi}
\cH(x,x) = {\sum}_{i=1}^r \lmd_i
\big( (u_i^1)^Tx_1 \big)^2 \cdots \big( (u_i^m)^Tx_m \big)^2  .
\ee
When $\cH$ is $\re$-Hermitian decomposable,
\reff{real:H(x,x)=sum:qi} also implies \reff{rHD:calA}.

\begin{lemma} \label{rH:poly=decomp}
For real vectors $u_i^j$, a tensor $\cH\in \re_D^{[n_1,\ldots,n_m]}$
has the decomposition \reff{real:H(x,x)=sum:qi}
if and only if the $\re$-Hermitian decomposition \reff{rHD:calA} holds.
\end{lemma}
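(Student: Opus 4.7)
The plan is to split the proof into the two directions. For the $(\Leftarrow)$ direction, I would start from the tensor equation \reff{rHD:calA} and compute $\cH(x,x) = \langle \cH, [x_1,\ldots,x_m]_\oh \rangle$ term by term. By bilinearity each summand contributes $\lmd_i \langle [u_i^1,\ldots,u_i^m]_\oh, [x_1,\ldots,x_m]_\oh \rangle$, which by \reff{H(x):hermSq} equals $\lmd_i |(u_i^1)^*x_1|^2 \cdots |(u_i^m)^*x_m|^2$. Since all $u_i^j$ and $x_j$ are real, $(u_i^j)^*x_j = (u_i^j)^T x_j$ is real and the absolute values collapse to squares, yielding exactly \reff{real:H(x,x)=sum:qi}.

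The $(\Rightarrow)$ direction is the substantive one. The natural move is to set $\cG := \sum_{i=1}^r \lmd_i [u_i^1,\ldots,u_i^m]_\oh$, which lies in $\re_D^{[n_1,\ldots,n_m]}$, and use the already-proven direction to observe that $\cG(x,x)$ equals the right-hand side of \reff{real:H(x,x)=sum:qi}, hence $\cG(x,x) = \cH(x,x)$ for all real $x$. The task then reduces to showing that the map $\cF \mapsto \cF(x,x)$ is injective on $\re_D^{[n_1,\ldots,n_m]}$; applying this to $\cF := \cH - \cG \in \re_D^{[n_1,\ldots,n_m]}$ will give $\cH = \cG$, which is \reff{rHD:calA}.

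To establish this injectivity, I would expand
\[
\cF(x,x) \;=\; \sum_{I,J} \cF_{IJ}\, (x_1)_{i_1}(x_1)_{j_1} \cdots (x_m)_{i_m}(x_m)_{j_m},
\]
and group the monomials by the unordered pairs $\{i_s,j_s\} = \{a_s,b_s\}$ with $a_s \le b_s$. Since $\cF \in \re_D^{[n_1,\ldots,n_m]}$, Theorem~\ref{Th:real Herm} tells us that $\cF_{IJ}$ is constant across all $(I,J)$ sharing the same multisets $\{i_s,j_s\}$, so the coefficient of the monomial $\prod_s (x_s)_{a_s}(x_s)_{b_s}$ equals $2^{d(A,B)}\, \cF_{AB}$, where $d(A,B) := \#\{s : a_s \ne b_s\}$. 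If $\cF(x,x) \equiv 0$, then every such coefficient vanishes, forcing $\cF_{AB} = 0$ for all $A \le B$, and then the $\re_D$ symmetry propagates this to $\cF_{IJ} = 0$ for all $(I,J)$, i.e.\ $\cF = 0$.

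The main obstacle is purely bookkeeping: one must be careful that the monomials of $\cF(x,x)$ form a basis only after the symmetry of Theorem~\ref{Th:real Herm} is invoked, since without that symmetry the map $\cF \mapsto \cF(x,x)$ has a nontrivial kernel on all of $\re^{[n_1,\ldots,n_m]}$ (this is exactly why not every real Hermitian tensor is $\re$-decomposable, as reflected in \reff{dimRD<dimR}). The combinatorial multiplicities $2^{d(A,B)}$ are nonzero, so injectivity on $\re_D^{[n_1,\ldots,n_m]}$ follows cleanly once this grouping is set up correctly.
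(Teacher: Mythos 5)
Your proof is correct, but the ``only if'' direction takes a genuinely different route from the paper's. You and the paper both reduce the problem to showing that the map $\cF \mapsto \cF(x,x)$ is injective on $\re_D^{[n_1,\ldots,n_m]}$ (applied to $\cF := \cH - \cG$). The paper does this with a two-line duality argument: since $\cF \in \re_D^{[n_1,\ldots,n_m]}$ is itself a real linear combination of tensors $[w^1,\ldots,w^m]_\oh$ with real $w^j$, the vanishing of $\langle \cF, [x_1,\ldots,x_m]_\oh\rangle$ for all real $x_i$ can be pulled back onto $\cF$ itself to give $\langle \cF, \cF \rangle = 0$, hence $\cF = 0$. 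Your argument is instead a direct coordinate computation: expand $\cF(x,x)$ in monomials, group by the unordered pairs $\{i_s,j_s\}$ via Theorem~\ref{Th:real Herm}, and read off that the coefficient of $\prod_s (x_s)_{a_s}(x_s)_{b_s}$ is $2^{d(A,B)}\cF_{AB}$ with a nonzero combinatorial factor, so all entries vanish. The paper's proof is shorter and avoids any bookkeeping, exploiting the positive-definiteness of the inner product; yours is more explicit, invokes Theorem~\ref{Th:real Herm} rather than merely the definition of $\re_D$, and has the side benefit of directly recovering the dimension count $\dim \re_D^{[n_1,\ldots,n_m]} = \prod_k \binom{n_k+1}{2}$ stated later in \reff{dimRD<dimR}. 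Both are valid; neither has a gap.
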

\begin{proof}
The ``if" direction is obvious. We prove the ``only if" direction.
Let $\mc{U} = {\sum}_{i=1}^r \lmd_i [u_i^1, \ldots, u_i^m]_\oh$.
Then
$
\langle \cH - \mc{U}, [x_1, \ldots, x_m]_\oh \rangle = 0
$
for all real $x_i \in \re^{n_i}$. Since
$\cH - \mc{U} \in \re_D^{[n_1,\ldots,n_m]}$,
$\langle \cH - \mc{U}, \cH - \mc{U} \rangle = 0$,
so $\cH = \mc{U}$ and \reff{rHD:calA} holds.
\end{proof}

In the following, we study the relationship between
real and complex Hermitian decompositions.

\begin{lemma} \label{coro:complex to real}
Suppose $\cH\in \re_D^{[n_1,\ldots,n_m]}$ has the decomposition
\[
\cH = {\sum}_{j=1}^r \lambda_j[u^1_{j},u^2_j,\ldots,u^m_j]_\oh ,
\]
with complex $u_j^i \in \bC^{n_i}, 0\neq \lambda_j \in \re$. Let
\[
U :=\bbm
(u^1_1\boxtimes \overline{u^1_1} \boxtimes \cdots   u^{m-1}_1 \boxtimes \overline{u^{m-1}_1 }),
& \cdots, &
(u^1_r\boxtimes \overline{u^1_r} \boxtimes \cdots u^{m-1}_r \boxtimes \overline{u^{m-1}_r} )
\ebm.
\]
If $k :=\rank(U)\in  \{1,2,r\}$, then
\begin{equation}  \label{eq:real}
 \cH = {\sum}_{j=1}^{r}\beta_j [u^1_{j},u^2_j,\ldots, u^{m-1}_j, s^m_j]_\oh
\end{equation}
for real vectors $s^m_j\in \re^{n_m}$ and real scalars $\beta_j \in \re $.
\end{lemma}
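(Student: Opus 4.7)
The plan is to flatten $\cH$ along the $m$-th mode, writing the given complex decomposition as $\sum_j \lambda_j\,T_j\otimes M_j$ with $T_j:=[u^1_j,\ldots,u^{m-1}_j]_\oh$ and $M_j:=u^m_j(u^m_j)^*$, and then to use the extra symmetry of $\re$-Hermitian decomposable tensors to force the matrix coefficients into real symmetric form, which can then be decomposed by the spectral theorem. Note that the column vector $v_j$ of $U$ is exactly the vectorization of the rank-$1$ positive semidefinite Hermitian matrix $T_j$, and the target is a rewriting $\sum_j \beta_j\, T_j\otimes (s^m_j(s^m_j)^T)$ with $\beta_j\in\re$ and $s^m_j\in\re^{n_m}$.

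The engine is Theorem~\ref{Th:real Herm}: since $\cH\in\re_D^{[n_1,\ldots,n_m]}$, swapping $i_m\leftrightarrow j_m$ leaves $\cH$ invariant, so $\sum_j \lambda_j\,T_j\otimes(M_j-M_j^T)=0$. Whenever the $T_j$'s (or linear combinations thereof) appearing in a sub-grouping are $\bC$-linearly independent, this forces the associated matrix coefficient to be symmetric; combined with its being Hermitian it is then real symmetric.

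I would then treat the three cases separately. For $k=r$ the $T_j$'s are $\bC$-linearly independent, so each $\lambda_j M_j$ is real symmetric of rank $1$ and is therefore of the form $\beta_j s^m_j(s^m_j)^T$. For $k=1$ all the $v_j$'s are positive real multiples $c_j$ of $v_1$, and uniqueness of rank-$1$ tensor decomposition forces each $u^i_j$ with $i\le m-1$ to be a scalar multiple of $u^i_1$; the flattening collapses to $T_1\otimes Q$ with $Q$ Hermitian, the swap symmetry makes $Q$ real symmetric of rank at most $r$, and the spectral theorem (padding with zero terms when $\rank Q<r$) decomposes $Q$ as $\sum_j \tilde\beta_j s^m_j(s^m_j)^T$, whence $\beta_j$ is recovered by rescaling.

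The main obstacle is case $k=2$, where the plan rests on two linear-algebraic facts. First, for Hermitian matrices the $\bC$- and $\re$-linear spans coincide, so if $v_1,v_2$ is a basis of the $2$-dimensional span then every $v_j=\gamma_j^{(1)}v_1+\gamma_j^{(2)}v_2$ has real coefficients. Second, a rank-$1$ PSD Hermitian element of the real span of two linearly independent rank-$1$ PSDs must itself be a positive real multiple of one of $v_1$ or $v_2$, as one verifies by a direct rank computation on $\alpha v_1+\beta v_2$. These facts yield a partition $\{1,\ldots,r\}=G_1\sqcup G_2$ with $v_j\propto v_i$ for $j\in G_i$, producing a clean split of the flattening as $T_1\otimes Q_1+T_2\otimes Q_2$. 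The swap symmetry together with the $\bC$-linear independence of $T_1,T_2$ then forces both $Q_1$ and $Q_2$ to be real symmetric, and applying the case-$k=1$ argument inside each group $G_i$ completes the decomposition.
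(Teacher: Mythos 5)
Your proposal follows essentially the same route as the paper: flatten along the last mode, invoke the $i_m\leftrightarrow j_m$ swap symmetry of $\re$-Hermitian decomposable tensors to get $\sum_j\lambda_j U_j(M_j-M_j^T)^T=0$, and treat the cases $k=1,2,r$ by forcing the aggregated matrix coefficients to be real symmetric and then applying the spectral theorem. The two linear-algebraic facts you spell out for $k=2$ (real coefficients in the Hermitian span, and that a rank-$1$ PSD element of the span of two independent rank-$1$ PSDs is a nonnegative multiple of one of them) are exactly the content the paper compresses into the unjustified sentence "Since each $U_i$ is the vectorization of a rank-$1$ Hermitian matrix, $U_i$ must be a multiple of $U_1$ or $U_p$," so you have actually filled a small gap; the appeal to uniqueness of rank-$1$ CP decompositions in the $k=1$ case, while correct, is unnecessary since only $U_j=c_jU_1$ is needed.
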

\begin{proof}
Let $\kappa_\phi$ be the canonical Kronecker flattening map in \reff{flat:Kron:m=2}, then
\[
H :=\kappa_\phi(\cH)  =
{\sum}_{j=1}^r \lambda_j U_j(u^m_{j} \boxtimes \overline{u^m_j})^T
= {\sum}_{j=1}^r \lambda_j  U_j( \overline{u^m_j}\boxtimes u^m_j)^T,
\]
where $U_j$ denotes the $j$th column of $U$.
The second equality holds, since $\cH$ is $\re$-Hermitian decomposable. Thus,
$
\sum_{j=1}^r \lambda_j  U_j(u^m_{j} \boxtimes
\overline{u^m_j}-\overline{u^m_j}\boxtimes u^m_j)^T =0.
$
\begin{itemize}

\item If $k=r$, then $\{U_1,\ldots,U_r\}$
is linearly independent, which implies
$u^m_{j} \boxtimes \overline{u^m_j}-\overline{u^m_j}\boxtimes u^m_j = 0$
for all $j$. So $u^m_{j} \boxtimes \overline{u^m_j} $ is real.
There exists $s^m_j\in \re^{n_m} $
such that $u^m_{j} \boxtimes \overline{u^m_j}  = s^m_j\boxtimes s^m_j $.
It gives a desired decomposition as in \reff{eq:real}.

\item If $k=1$, then there exists $\alpha_j  \in \re$ such that
$U_j = \alpha_j U_1$ for $1\le j \le r$. Thus
\[
H = U_1 V_1^T  = U_1 \overline{V_1}^T
\quad \mbox{where} \quad
V_1:= {\sum}_{j=1}^r \alpha_j \lambda_j u^m_{j} \boxtimes \overline{u^m_j} .
\]
Since $U_1 (V_1-\overline{V_1})^T = 0$,
$V_1$ is the vectorization of a real symmetric matrix,
then there exist $s^m_j\in \re^{n_m} $ and $\beta_j \in \re $ such that
$
V_1 = {\sum}_{j=1}^r \beta_j s^m_{j} \boxtimes s^m_j .
$
It also gives a desired decomposition as in \reff{eq:real}.

\item If $k=2$, we can generally assume that
$U_1,U_p$ are linearly independent.
For each $i \not\in \{1,p\}$, $U_i$ is a linear combination of
$U_1,U_p$. Since each $U_i$
is the vectorization of a rank-$1$ Hermitian matrix,
$U_i$ must be a multiple of $U_1$ or $U_p$, say,
$U_i = U_1$ for  $1\le i \le p-1$
and $U_i = U_p$ for $p \le i\le r$,
up to scaling of $\lmd_i$. Thus,
\[
H =   U_1 X_1^T + U_p X_2^T
=  U_1 \overline{X_1}^T + U_p \overline{X_2}^T,
\]
where $X_1 :=\sum_{i=1}^{p-1}\lambda_i u^m_{i} \boxtimes \overline{u^m_i}$,
$X_2 :=\sum_{j=p}^{r}\lambda_j u^m_{j} \boxtimes \overline{u^m_j}$.
Since  $U_1 (X_1 - \overline{X_1})^T + U_p (X_2-\overline{X_2})^T =0$,
we have $X_1 = \overline{X_1}$ and $X_2 =\overline{X_2}$,
so $X_1, X_2$ are vectorizations of real symmetric matrices.
There exist $s^m_j \in \re^{n_m},\beta_j \in \re $ such that
$
X_1
=  {\sum}_{i=1}^{p-1}\beta_i s^m_{i} \boxtimes \overline{s^m_i}
$,
$
X_2
=  {\sum}_{j=p}^{r}\beta_j s^m_{j} \boxtimes \overline{s^m_j}.
$
This also gives a desired decomposition as in \reff{eq:real}.
\end{itemize}
For every case of $k=1,2,r$, we get a decomposition like \reff{eq:real}.
\end{proof}

Based on the above lemma, we can get the following conclusion.

\begin{prop} \label{pro:rHD:r=c}
For $\cH \in \re_D^{[n_1,\ldots,n_m]}$,
if $\hrank(\cH) \le 3$, then $\hrank(\cH) = \hrank_\re(\cH)$.
Furthermore, if $\hrank_\re(\cH) \le 4$, then $\hrank(\cH) = \hrank_\re(\cH)$.
\end{prop}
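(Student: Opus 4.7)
The plan is to prove the first claim directly by iterating Lemma~\ref{coro:complex to real} one mode at a time, and then to deduce the second claim from a short case analysis on $\hrank(\cH)$. For the second claim: if $\hrank_\re(\cH) \le 4$, then by~\reff{rHR>=cHR} we have $\hrank(\cH) \le \hrank_\re(\cH) \le 4$, so either $\hrank(\cH) \le 3$ (invoke the first claim) or $\hrank(\cH) = 4 = \hrank_\re(\cH)$; in either case the two ranks coincide.

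For the first claim, fix $\cH \in \re_D^{[n_1,\ldots,n_m]}$ with $r := \hrank(\cH) \le 3$ and a complex Hermitian rank decomposition $\cH = \sum_{j=1}^r \lambda_j [u_j^1,\ldots,u_j^m]_\oh$. I apply Lemma~\ref{coro:complex to real} with respect to mode $m$: its matrix $U$ has $r$ columns, so $k := \rank(U) \le r \le 3$, which forces the hypothesis $k \in \{1,2,r\}$ automatically. The lemma returns a decomposition of the same length $r$ in which the mode-$m$ factor vectors have been replaced by real vectors. I then permute the modes so that mode $m-1$ sits in the last position, form the corresponding flattening matrix (again with $r$ columns, so of rank at most $r \le 3$), and apply the lemma again. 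Iterating for the modes $m, m-1, \ldots, 1$ produces, after $m$ steps, a Hermitian decomposition of length at most $r$ in which every factor vector is real, so $\hrank_\re(\cH) \le r = \hrank(\cH)$; combined with~\reff{rHR>=cHR} this gives $\hrank(\cH) = \hrank_\re(\cH)$.

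The main point to verify is that applying Lemma~\ref{coro:complex to real} at a later iteration does not spoil the realness of modes already processed. Inspecting the lemma's proof shows that the factor vectors in the non-target modes are only altered by complex rescalings (and, in the $k=1,2$ cases, by re-identification up to such scalings). This is harmless because the rank-$1$ Hermitian tensor $[v_1,\ldots,v_m]_\oh$ is invariant under replacing any $v_j$ by a unit-modulus phase times $v_j$, since the phase in $v_j$ cancels against its conjugate in $\overline{v_j}$. Consequently any complex rescaling $\mu\, v_j$ of an already real $v_j$ can be rewritten as the real vector $|\mu|\,v_j$ (with the positive factor $|\mu|^2$ absorbed into the real coefficient), without disturbing the rank-$1$ tensor. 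Hence the invariant ``modes already processed carry real factor vectors'' is preserved through every iteration, and the argument goes through.
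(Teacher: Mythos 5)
Your proof is correct and follows essentially the same route as the paper: iterate Lemma~\ref{coro:complex to real} one mode at a time (the hypothesis $k = \rank U \in \{1,2,r\}$ being automatic since $k \le r \le 3$), then settle the second claim by a short case split on $\hrank(\cH)$ using~\reff{rHR>=cHR}. Where the paper is terse---it simply asserts that each set $\{u_j^i\}_{j=1}^r$ ``can be changed to a set of real vectors while the length of decomposition does not change''---you helpfully spell out the invariance that already-realified modes are disturbed only by complex rescalings, which is harmless because phases cancel in $[v_1,\ldots,v_m]_\oh$.
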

\begin{proof}
Let $r:= \hrank(\cH)$.
We consider $r>0$ (the case $r=0$ is trivial).
If $r \le 3$, we can apply
Lemma~\ref{coro:complex to real} to $\cH$.
Note that $k := \rank U \in \{1,2,r\}$,
since $r\le 3$. For each $i=1,\ldots,m $, the set $\{u_j^i \}_{j=1}^r $
can be changed to a set of real vectors
 while the length of decomposition does not change.
As a result, we get a  $\re$-Hermitian decomposition
for $\cH$ with length $r$, so
$\hrank_\re(\cH) = \hrank(\cH)$.

If $\hrank_\re(\cH) \le 4 $, then $\hrank(\cH)\le 4$.
If $\hrank(\cH) \le 3 $, then the previous argument proves
$\hrank(\cH) = \hrank_\re(\cH)$. If $\hrank(\cH)=4 $,
then $\hrank_\re(\cH)\ge 4$, and hence $\hrank_\re(\cH)=\hrank(\cH)=4$.
\end{proof}

For $\re$-Hermitian decomposable tensors,
the concepts of border generic, typical and expected ranks
can be similarly defined, as in the Subsection~\ref{ssc:hbr}.
The discussion is the same as for the complex case.
We omit this for cleanness of the paper.

\section{Matrix flattenings}
\label{sc:MatFlat}

All classical matrix flattenings are applicable to Hermitian tensors.
In particular, Hermitian and Kronecker flattenings are special for Hermitian tensors.

\subsection{Hermitian flattening}
\label{ssc:Hflat}

Define the linear map
$\mathfrak{m}: \cpx^{[n_1, \ldots,n_m]}  \to  \mathbb{M}^{N}$
($N =n_1 \cdots n_m$) such that for all $v_i\in \bC^{n_i}$, $i=1,\ldots,m$,
\be  \label{df:map:frakm}
\baray{c}
\mathfrak{m} \big( [v_1, v_2, \ldots,  v_m]_\oh  \big) \, = \,
(v_1 v_1^*) \boxtimes (v_2 v_2^*) \boxtimes \cdots \boxtimes (v_m v_m^*) ,
\earay
\ee
where $\boxtimes$ denotes the classical Kronecker product.
The map $\mathfrak{m}$ is a bijection between
$\bC^{[n_1,\ldots,n_m]}$ and
$
\mathbb{M}^{N}  \cong  \mathbb{M}^{n_1} \otimes \cdots \otimes \mathbb{M}^{n_m}.
$
The Hermitian decomposition $\cH = \sum_{i=1}^r \lmd_i
[u_i^1, \ldots, u_i^m]_\oh$ is equivalent to that
\be  \label{H=qq*:rk1}
\left\{ \baray{rcl}
\mathfrak{m}(\cH)
& = & {\sum}_{i=1}^r \lmd_i \,
( u_i^1(u_i^1)^* )\boxtimes  \cdots \boxtimes (u_i^m (u_i^m)^*) \\
& = & {\sum}_{i=1}^r \lmd_i \,
 ( u_i^1 \boxtimes \cdots \boxtimes u_i^m )
 ( u_i^1 \boxtimes \cdots \boxtimes u_i^m )^* .
\earay \right.
\ee
The matrix $H := \mathfrak{m}( \cH )$
is called the {\em Hermitian flattening matrix} of $\cH$.
It can be labelled by
$I=(i_1,\ldots, i_m)$ and $J=(j_1,\ldots, j_m)$ such that
\be \label{HIJ=Hij}
(H)_{IJ} = \cH_{i_1 \ldots i_m j_1 \ldots j_m} .
\ee
The following is a basic result about flattening and ranks.

\begin{lemma} \label{lema:matrix rank}
If $H = \mathfrak{m}(\cH)$, then
$\hrank (\mathcal{H}) \ge  \hbrank (\mathcal{H}) \ge \rank (H)$.
\end{lemma}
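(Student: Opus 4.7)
The plan is to handle the two inequalities in the chain separately. The first inequality $\hrank(\cH) \ge \hbrank(\cH)$ is already recorded in \reff{border<=hrank}, so the real content of the lemma is the second inequality $\hbrank(\cH) \ge \rank(H)$.

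For this I would run a standard semicontinuity argument. Set $r := \hbrank(\cH)$. By Definition~\ref{def:border:HR}, there exists a sequence $\{\cH_k\} \subseteq \bC^{[n_1,\ldots,n_m]}$ with $\hrank(\cH_k) = r$ for every $k$ and $\cH_k \to \cH$. The flattening map $\mathfrak{m}$ defined in \reff{df:map:frakm} is $\re$-linear, hence continuous, so $\mathfrak{m}(\cH_k) \to \mathfrak{m}(\cH) = H$ in the Euclidean topology on $\mathbb{M}^N$.

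Next I would exploit the identity \reff{H=qq*:rk1}: because $\hrank(\cH_k) = r$, the matrix $\mathfrak{m}(\cH_k)$ is a real linear combination of $r$ matrices of the form $(u^1\boxtimes\cdots\boxtimes u^m)(u^1\boxtimes\cdots\boxtimes u^m)^*$, each of which has rank at most one. Subadditivity of rank then gives $\rank(\mathfrak{m}(\cH_k)) \le r$ for every $k$.

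To conclude, I would invoke lower semicontinuity of matrix rank under Euclidean limits: the locus $\{M \in \mathbb{M}^N : \rank M \le r\}$ is closed, being the common zero set of all $(r+1)\times(r+1)$ minors. Passing to the limit yields $\rank(H) \le r = \hbrank(\cH)$, which is exactly the required inequality. There is no serious obstacle in this proof; the only point to be careful about is that rank is only lower semicontinuous (it can drop in the limit, but cannot jump up), which is precisely the fact that powers the notion of border rank and lets the argument go through.
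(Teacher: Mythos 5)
Your proof is correct and follows essentially the same approach as the paper: both take the defining sequence $\cH_k \to \cH$ with $\hrank(\cH_k) = r = \hbrank(\cH)$, push it through the flattening map $\mathfrak{m}$, and conclude $\rank(H) \le r$ by lower semicontinuity of matrix rank. You merely spell out the two small steps the paper leaves implicit---the bound $\rank \mathfrak{m}(\cH_k) \le r$ via subadditivity over the $r$ rank-one summands of the Hermitian decomposition, and the closedness of the bounded-rank locus via vanishing of $(r+1)\times(r+1)$ minors.
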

\begin{proof}
The first inequality is obvious. We prove the second one.
Let $r := \hbrank(\mathcal{H})$,
then there is a sequence $\{ \mH_k \} \subseteq \bC^{[n_1,\ldots, n_m]}$
such that $\mH_k \to \mH$ and $\hrank \mH_k = r$.
Let $H_k := \mathfrak{m}( \mH_k)$, then $H_k \to H$ and $\rank\, H_k \leq r$,
so $\rank \,(H) \leq r$.
\end{proof}

It is possible that $\text{hrank}(\mathcal{H}) > \text{rank}(H)$.
For instance, consider the basis tensor
$\mc{E}^{(11)(22)}(1)$. Its Hermitian flattening matrix has rank $2$
while the Hermitian rank is $4$ (see Example~\ref{Ex:Herm Decom m=2}).

For each $\cH \in \re_D^{[2,2]}$, its Hermitian flattening matrix is in the form
\be \label{redc:H=ACB}
 \mathfrak{m}(\cH)  =
\begin{pmatrix}
  A & C \\  C & B
 \end{pmatrix}, \quad \mbox{where} \quad
  A, B, C \in  \mc{S}^2.
\ee

\begin{prop} \label{thm:normal form}
For each $\cH \in \re_D^{[2,2]}$ as above,
there exist invertible matrices $P,Q\in \re^{2\times 2}$ such that $\tilde{\cH}:=(P,Q)\times_{cong} \cH $ has the flattening
\be  \label{eq: normal form}
\mathfrak{m}(\tilde{\cH}) =
\bpm    sI_2 & D \\ D & s\tilde{B} \epm
 - s
\bpm  uu^T & 0 \\   0 & 0 \epm,
\ee
where $s\in \{0,1,-1\}$, $D$ is real diagonal,
$u\in \re^{2} $ and $\tilde{B} \in \mc{S}^2$ .
In particular, $u=0$ if one of $A,B$ is positive (or negative) definite,
and $s=0$ if $A=B=0$.
\end{prop}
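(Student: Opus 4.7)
The plan is to exploit the identity
\[
\mathfrak{m}\bigl((P,Q)\times_{cong}\cH\bigr) = (P\boxtimes Q)\,\mathfrak{m}(\cH)\,(P\boxtimes Q)^T,
\]
which holds because $P,Q$ are real, and to read off the induced action on the block triple $(A,B,C)$. Writing $P=\bpm a&b\\c&d\epm$ and expanding in $2\times 2$ blocks gives
\begin{align*}
\tilde A &= a^2\, QAQ^T + 2ab\, QCQ^T + b^2\, QBQ^T,\\
\tilde C &= ac\, QAQ^T + (ad+bc)\, QCQ^T + bd\, QBQ^T,\\
\tilde B &= c^2\, QAQ^T + 2cd\, QCQ^T + d^2\, QBQ^T.
\end{align*}
The task will be to bring $(A,B,C)$ to a triple of the form $\bigl(s(I_2-uu^T),\,s\tilde B,\,D\bigr)$ by a suitable choice of $P$ and $Q$.

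I would split the argument into three cases. First, if $A=B=0$, pick $P=I$ and an orthogonal $Q$ with $QCQ^T=D$ diagonal; this yields the normal form with $s=0$. Second, suppose one of $A,B$ is positive or negative definite. After possibly swapping the outer coordinate via $P=\bpm 0&1\\1&0\epm$, one may assume $A$ is definite with sign $s\in\{1,-1\}$. By Sylvester's law there is $Q_0$ with $Q_0AQ_0^T=sI_2$. Then let $R$ be an orthogonal matrix diagonalizing the symmetric matrix $Q_0CQ_0^T$; since $R(sI_2)R^T=sI_2$, taking $Q:=RQ_0$ produces $\tilde A=sI_2$ and $\tilde C$ diagonal, giving the form with $u=0$.

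The remaining case --- neither $A$ nor $B$ definite but $(A,B)\neq(0,0)$ --- is the crux. Without loss of generality $A\neq 0$ (else swap). By Sylvester's law $A$ is congruent to one of $\mathrm{diag}(1,-1)$, $\mathrm{diag}(1,0)$, or $\mathrm{diag}(-1,0)$, and each of these can be written as $s(I_2-uu^T)$ with $s\in\{\pm 1\}$ and nonzero $u\in\re^2$. After choosing $Q_0$ to realize this, I would still have the freedom of pairs $(P',Q')$ that fix $\tilde A$: namely $Q'$ in the one-parameter congruence stabilizer of $s(I_2-uu^T)$, and $P'$ of lower-triangular shape $\bpm 1&0\\c&d\epm$. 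The parameter $c$ shifts $\tilde C$ by a multiple of $\tilde A$, while $d$ together with the stabilizer parameter provide enough freedom to eliminate the single off-diagonal entry of the symmetric $2\times 2$ matrix $\tilde C$; the resulting $\tilde B$ is absorbed into the free slot $s\tilde B$ of the normal form.

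The hard part will be Case~3: verifying that the combined freedom --- the one-parameter stabilizer in $Q$ together with the parameters $c,d$ of the lower-triangular $P$ --- actually suffices to diagonalize $\tilde C$ for each of the three Sylvester subcases of $A$. I would check this by a direct $2\times 2$ calculation in each subcase, using that only one off-diagonal entry of $\tilde C$ needs to be eliminated while $\tilde B$ is left unconstrained in the final form.
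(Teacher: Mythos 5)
Your Cases~1 and~2 are correct and close to the paper's proof (which uses $P=I_2$ throughout and only acts on the inner mode with $Q$). The gap is in Case~3, and it is the crux of the whole proposition. You plan to first congruence $A$ to a Sylvester normal form $\mathrm{diag}(1,-1)$, $\mathrm{diag}(1,0)$, or $\mathrm{diag}(-1,0)$, and then use the residual stabilizer together with a lower-triangular $P$ to diagonalize $\tilde C$. This fails. When $\tilde A=\mathrm{diag}(1,-1)$, the congruence stabilizer is $O(1,1)$; under a hyperbolic rotation by $t$ the off-diagonal of $\tilde C$ becomes $\tfrac{1}{2}\sinh(2t)(c_{11}+c_{22})+\cosh(2t)\,c_{12}$, which has no root when $|2c_{12}|\ge|c_{11}+c_{22}|$ (e.g.\ $C=\bpm 0&1\\1&0\epm$, where the off-diagonal is $\cosh(2t)\ge 1$ for every $t$). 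The lower-triangular $P=\bpm 1&0\\c&d\epm$ only adds $c\,\tilde A$ (a diagonal matrix) to $d\,QCQ^T$, so it cannot touch the stubborn off-diagonal entry either. The discrete components of $O(1,1)$ are diagonal or sign-swaps and do not help. A similar obstruction appears for $\tilde A=\mathrm{diag}(1,0)$ when $c_{22}=0$, $c_{12}\neq 0$.

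The missing idea is that you must \emph{not} insist that $\tilde A$ be diagonal. The paper's trick is to choose $v$ with $A+vv^T\succ 0$ (possible whenever $A$ is not negative semidefinite; otherwise work with $-\cH$), congruence $A+vv^T$ to $I_2$ via some $U$, and then use the \emph{full} orthogonal group $O(2)$ — which stabilizes $I_2$ and can diagonalize any symmetric $2\times 2$ matrix — to make $\tilde C$ diagonal. One then writes $\tilde A = I_2 - (VUv)(VUv)^T = s(I_2-uu^T)$ with $u:=VUv$; the rank-one perturbation is absorbed into the free vector $u$ of the normal form rather than being forced to be a coordinate vector. That is exactly what the extra term $-s\bpm uu^T&0\\0&0\epm$ in \reff{eq: normal form} is for. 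Without this perturbation step you are stuck with a one-parameter stabilizer and cannot reach the normal form.
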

\begin{proof}
Case I: Assume one of $A,B$ is nonzero, say, $A \ne 0$.
%
%
If $A$ is not negative semidefinite, there is $v\in \re^2$ such that $A+vv^T \succ 0$.
Then there is $U \in \re^{2 \times 2}$ such that $U(A+vv^T)U^T = I_2$.
There exists an orthogonal matrix $V$ such that $D:=V(UCU^T)V^T$ is diagonal.
Let $\tilde{\cH}:=(I_2,VU)\times_{cong} \cH $, then
\[
\begin{array}{ll}
            \mathfrak{m}(\tilde{\cH}) &=
            \begin{pmatrix}
                V(U(A+vv^T)U^T)V^T & V(UCU^T)V^T\\
                V(UCU^T)V^T & V(UBU^T)V^T
            \end{pmatrix}
            -
            \begin{pmatrix}
                V(Uvv^TU^T)V^T & 0\\
                0& 0
            \end{pmatrix} \\
            &=
            \begin{pmatrix}
                sI_2 & D \\
                D & s\tilde{B}
            \end{pmatrix}
            -
            s\begin{pmatrix}
                uu^T & 0\\
                0 & 0
            \end{pmatrix} .
 \end{array}
\]
So, the decomposition~\ref{eq: normal form} holds for
$s=1, \tilde{B}:= V(UBU^T)V^T, u:=VUv$. If $A$ is negative semidefinite,
then $-A$ is not negative semidefinite.
We do the same thing for $-\cH$ and can get \ref{eq: normal form} with $s=-1$.
In particular, if either $A$ or $B$ is positive (or negative) definite,
we can choose $v=0$ and thus $u=VUv=0$.

Case II: Assume $A=B=0$. Since $C$ is real symmetric, there exists a matrix $U$ such that $D:=UCU^T$ is diagonal. Let $\tilde{\cH}:=(I_2,U)\times_{cong} \cH $, then
\[
\mathfrak{\tilde{\cH}} =
\begin{pmatrix}
   0 & UCU^T \\
  UCU^T & 0
\end{pmatrix} =
\begin{pmatrix}
    0 & D \\
    D & 0
\end{pmatrix}.
 \]
For this case, $s=0$.
\end{proof}

Suppose the diagonal matrix $D$ in \reff{eq: normal form} is
$D = \mbox{diag}(d_1, d_2)$.
When $s=0$, the tensor $\tilde{\cH} $ has the Hermitian decomposition:
\[
\frac{1}{2}d_1\left( [\begin{pmatrix}
            1 \\ 1
        \end{pmatrix},
        \begin{pmatrix}
            1 \\ 0
        \end{pmatrix}
        ]_\oh -  [\begin{pmatrix}
            1 \\ -1
        \end{pmatrix},
        \begin{pmatrix}
            1 \\ 0
        \end{pmatrix}
        ]_\oh \right)+
        \frac{1}{2}d_2\left( [\begin{pmatrix}
            1 \\ 1
        \end{pmatrix},
        \begin{pmatrix}
            0 \\ 1
        \end{pmatrix}
        ]_\oh -  [\begin{pmatrix}
            1 \\ -1
        \end{pmatrix},
        \begin{pmatrix}
            0 \\ 1
        \end{pmatrix}
        ]_\oh \right) .
\]
Thus, $\hrank_\re(\tilde{\cH})\le 4 $. When $s=1$ or $-1$, let
$E:=s\tilde{B}-s \cdot\text{diag}(d_1^2,d_2^2) $. Suppose
$E = \lambda_1 v_1v_1^T+\lambda_2 v_2v_2^T $ is an orthogonal eigenvalue decomposition.
Then, (note that $s^2=1$),
\begin{multline*}
\mathfrak{m}(\tilde{\cH}) = s
        \begin{pmatrix}
            1 & sd_1 \\ sd_1 & s^2d_1^2
        \end{pmatrix} \boxtimes
        \begin{pmatrix}
            1 & 0 \\ 0 & 0
        \end{pmatrix}
        +
        s
        \begin{pmatrix}
            1 & sd_2 \\ sd_2 & s^2d_2^2
        \end{pmatrix} \boxtimes
        \begin{pmatrix}
            0 & 0 \\ 0 & 1
        \end{pmatrix}
        +  \\
        \begin{pmatrix}
            0 & 0 \\ 0 & 1
        \end{pmatrix} \boxtimes (\lambda_1 v_1v_1^T+\lambda_2 v_2v_2^T)
        -
        s\begin{pmatrix}
            1 & 0 \\ 0 & 0
        \end{pmatrix}\boxtimes (uu^T) .
\end{multline*}
The above gives the real Hermitian decomposition for $\tilde{\cH} $:
\begin{multline*}
        \tilde{\cH} = s[\begin{pmatrix}
            1 \\ sd_1
        \end{pmatrix},
        \begin{pmatrix}
            1 \\ 0
        \end{pmatrix}
        ]_\oh
        +
        s[\begin{pmatrix}
            1 \\ sd_2
        \end{pmatrix},
        \begin{pmatrix}
            0 \\ 1
        \end{pmatrix}
        ]_\oh
        +\lambda_1
        [\begin{pmatrix}
            0 \\ 1
        \end{pmatrix},
        v_1
        ]_\oh
        + \\
        \lambda_2
        [\begin{pmatrix}
            0 \\ 1
        \end{pmatrix},
        v_2
        ]_\oh
        -s
        [\begin{pmatrix}
            1 \\ 0
        \end{pmatrix},
        u
        ]_\oh .
\end{multline*}
For all cases, we have $\hrank_\re(\tilde{\cH})\le 5 $. Since
$\tilde{\cH} = (P,Q)\times_{cong}\cH $ and $P,Q$ are invertible,
$\hrank_\re(\cH)= \hrank_\re(\tilde{\cH})$.
Therefore, we get the following conclusion.

\begin{theorem} \label{thm:RD22:rank}
For every $\cH\in \re_D^{[2,2]}$, with the flattening as in \reff{redc:H=ACB},
we have $\hrank_\re(\cH) \le 5$.
In particular, we have $\hrank(\cH) = \hrank_\re(\cH) \le 4$ if one of $A,B$
is positive (or negative) definite, or if $A=B=0$.
\end{theorem}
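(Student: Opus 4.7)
The plan is to assemble the proof directly from the reductions performed in the paragraphs preceding the statement, so the work reduces to bookkeeping rather than new computation.

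First I would invoke Proposition~\ref{thm:normal form} to replace $\cH$ by $\tilde{\cH} := (P,Q)\times_{cong} \cH$ for some invertible real matrices $P,Q\in \re^{2\times 2}$, bringing the Hermitian flattening to the normal form in \reff{eq: normal form}. Because $P,Q$ are real and invertible, Proposition~\ref{prop:rankQA=A} applies (its proof carries over verbatim to real vectors), hence $\hrank_\re(\cH)=\hrank_\re(\tilde{\cH})$ and $\hrank(\cH)=\hrank(\tilde{\cH})$. Thus it suffices to bound the real Hermitian rank of the normal form $\tilde{\cH}$.

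Next I would split into the two cases already prepared in the text. When $s=0$, the explicit four-term $\re$-Hermitian decomposition written immediately before the theorem exhibits $\hrank_\re(\tilde{\cH}) \le 4$. When $s=\pm 1$, I would decompose the flattening as the sum of Kronecker products displayed above and read off the five-term $\re$-Hermitian decomposition, giving $\hrank_\re(\tilde{\cH}) \le 5$. Combining both cases yields the general bound $\hrank_\re(\cH)\le 5$.

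For the sharper claim in the special cases, I would observe that Proposition~\ref{thm:normal form} guarantees $u=0$ whenever one of $A,B$ is positive or negative definite, and $s=0$ whenever $A=B=0$. In either situation the last summand $-s[\,(1,0)^T,u\,]_\oh$ in the five-term decomposition vanishes (for the $s=\pm 1$ case) or we are directly in the four-term $s=0$ case, so $\hrank_\re(\cH)\le 4$. Finally, to upgrade this to $\hrank(\cH)=\hrank_\re(\cH)$, I would apply Proposition~\ref{pro:rHD:r=c}, which states that for $\cH\in \re_D^{[n_1,\ldots,n_m]}$ with $\hrank_\re(\cH)\le 4$ the real and complex Hermitian ranks coincide. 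Since all of the required decompositions are written out explicitly in the text leading up to the theorem, there is no genuine obstacle — the only point of care is verifying that the real congruence transformation preserves both $\hrank$ and $\hrank_\re$, which is immediate from the statement (and proof) of Proposition~\ref{prop:rankQA=A}.
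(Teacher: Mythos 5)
Your proposal is correct and follows essentially the same route as the paper's own proof: reduce to the normal form of Proposition~\ref{thm:normal form} via a real invertible congruence (which preserves both $\hrank$ and $\hrank_\re$), read off the explicit four- and five-term $\re$-Hermitian decompositions already displayed before the theorem, note that $u=0$ or $s=0$ kills the fifth term in the special cases, and then invoke Proposition~\ref{pro:rHD:r=c} to upgrade to equality of real and complex Hermitian rank. Your added remark that Proposition~\ref{prop:rankQA=A} carries over verbatim to $\hrank_\re$ because $P,Q$ are real is a small but welcome bit of care that the paper leaves implicit.
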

\begin{proof}
The inequality $\hrank_\re(\cH) \le 5$ is implied by $ \hrank_\re(\tilde{\cH})\le 5$
and $\hrank_\re(\cH)= \hrank_\re(\tilde{\cH})$. If one of $A,B$
is positive (or negative) definite, then $u=0$ by Proposition~\ref{thm:normal form}
and hence $\hrank_\re(\cH)= \hrank_\re(\tilde{\cH}) \le 4 $.
If $A=B=0$, we already have $\hrank_\re(\cH)= \hrank_\re(\tilde{\cH}) \le 4 $. By Proposition \ref{pro:rHD:r=c}, $\hrank(\cH) = \hrank_\re(\cH) \le 4$ if one of $A,B$
is positive (or negative) definite, or if $A=B=0$.
\end{proof}

For a general $\cH \in \re^{[2,2]} $, its Hermitian flattening matrix is
\[
\mathfrak{m} (\cH) \, = \,
\begin{pmatrix}
  A & C \\
  C^T & B
\end{pmatrix}
\]
where $A,B$ are real symmetric and $C$ is generally not symmetric.
By doing the same thing as above, we can congruently transform
$\cH$ to $\tilde{\cH}$ such that
\[
\mathfrak{m}(\tilde{\cH})  =      \begin{pmatrix}
            sI_2 & \tilde{C} \\
            \tilde{C}^T & sD
        \end{pmatrix}
        -
        s\begin{pmatrix}
            uu^T & 0 \\
            0 & 0
        \end{pmatrix},
\]
where $s \in \{0, 1, -1\}$, $D$ is diagonal but $\tilde{C}$
is still generally not symmetric.
However, the above does not produce Hermitian decompositions
with desired lengths as for the case \reff{eq:  normal form}.

Hermitian ranks can be investigated through the Hermitian flattening.
For $A \in \mathbb{M}^{N}$,
we define its $\mathbb{M}$-rank as
\be \label{M-rank:A}
\rank_{\mathbb{M}} A \, := \, \min \left\{ r
\left|\baray{c}
A = \sum_{i=1}^r
\lmd_i (a_i^1(a_i^1)^*) \boxtimes \cdots (a_i^m(a_i^m)^*), \\
\lmd_i \in \re,\quad a_i^j \in \cpx^{n_j}.
\earay\right.
\right\}.
\ee
Generic and typical $\mathbb{M}$-ranks
can be similarly defined for $\mathbb{M}^{N}$ as in \S\ref{ssc:hbr}.

\begin{theorem}
For every $\mH \in \bC^{[n_1, \ldots, n_m]}$, we have
$\hrank \mH = \rank_{\mathbb{M}} \mathfrak{m}(\mH)$.
Moreover, an integer $r$ is the generic (resp., a typical) rank for
$\bC^{[n_1, \ldots, n_m]}$ if and only if $r$ is
is the generic (resp., a typical) rank for $\mathbb{M}^{N}$.
\end{theorem}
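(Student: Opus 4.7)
The plan is to prove this theorem in two stages, exploiting the fact that $\mathfrak{m}$ is a real-linear bijection between $\bC^{[n_1,\ldots,n_m]}$ and $\mathbb{M}^{N}$.

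First I would establish the rank equality $\hrank \mH = \rank_{\mathbb{M}} \mathfrak{m}(\mH)$. By the defining relation \reff{df:map:frakm}, a decomposition $\mH = \sum_{i=1}^r \lambda_i [u_i^1,\ldots,u_i^m]_\oh$ with $\lambda_i \in \re$ and $u_i^j \in \cpx^{n_j}$ maps, via real-linearity of $\mathfrak{m}$, to exactly a decomposition of $\mathfrak{m}(\mH)$ of the form appearing in \reff{M-rank:A}. Conversely, any $\mathbb{M}$-decomposition of $\mathfrak{m}(\mH)$ pulls back (using that $\mathfrak{m}$ is bijective) to a Hermitian decomposition of $\mH$ with the same number of terms. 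Taking the minimum on each side yields the desired equality.

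Next I would address generic and typical ranks. The map $\mathfrak{m}$ is a linear isomorphism between the real vector spaces $\bC^{[n_1,\ldots,n_m]}$ and $\mathbb{M}^{N}$, both of real dimension $N^2$. Therefore $\mathfrak{m}$ is a homeomorphism under the Euclidean topology and pushes Lebesgue measure forward to a positive multiple of Lebesgue measure. Because of the equality from the previous paragraph, $\mathfrak{m}$ sends the sets $\cY_r$ and $\cZ_r$ defined in \reff{set:Yr} and \reff{set:Zr} bijectively onto their $\mathbb{M}$-analogues inside $\mathbb{M}^{N}$. Homeomorphisms preserve closures, so $\operatorname{cl}(\cY_r) = \bC^{[n_1,\ldots,n_m]}$ if and only if the image is all of $\mathbb{M}^{N}$, which gives the statement about generic rank. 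Similarly, $\mathfrak{m}(\cZ_r)$ has positive Lebesgue measure in $\mathbb{M}^{N}$ if and only if $\cZ_r$ has positive Lebesgue measure in $\bC^{[n_1,\ldots,n_m]}$, yielding the statement about typical rank.

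There is no real obstacle here; the entire argument is mechanical once one verifies that $\mathfrak{m}$ is a real-linear bijection, and the only minor point requiring care is to note that the scalars $\lambda_i$ remain in $\re$ on both sides and the form of the summands in \reff{M-rank:A} matches exactly the image under $\mathfrak{m}$ of rank-$1$ Hermitian tensors, as recorded in \reff{H=qq*:rk1}.
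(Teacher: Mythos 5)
Your proof follows essentially the same approach as the paper's: the rank equality is read off from the decomposition correspondence \reff{H=qq*:rk1}, and the generic/typical rank statements are transferred via the fact that $\mathfrak{m}$ is a real-linear bijection. You supply slightly more detail (homeomorphism preserving closures, Lebesgue measure pushforward) than the paper's terse two-sentence proof, but the underlying argument is identical.
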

\begin{proof}
The equality $\hrank \mH = \rank_{\mathbb{M}} \mathfrak{m}(\mH)$
follows from the equation \reff{H=qq*:rk1}.
Since $\mathfrak{m}$ is a bijection between $\bC^{[n_1, \ldots, n_m]}$ and
$\mathbb{M}^{N}$, an integer $r$ is the generic (resp., a typical) rank for
$\bC^{[n_1, \ldots, n_m]}$ if and only if $r$
is the generic (resp., a typical) rank for $\mathbb{M}^{N}$.
\end{proof}

\subsection{Kronecker flattening}
\label{ssc:kron}

Every matrix flattening map $\phi$ on the tensor space
$\cpx^{n_1 \times \ldots \times n_m}$
can be used to define a new flattening map $\kappa_\phi$ on
$\bC^{[n_1,\ldots,n_m]}$.
Suppose $\phi$ flattens tensors in $\cpx^{n_1 \times \ldots \times n_m}$
to matrices of the size $D_1$-by-$D_2$.
Then we can define the linear map $\kappa_\phi:
\bC^{[n_1,\ldots,n_m]}\to \bC^{D_1^2 \times D_2^2 }$ such that
\be  \label{kronflat:kapphi}
\kappa_\phi \big( [u_1, \ldots, u_m]_{\oh} \big)  =
\phi(u_1 \otimes \cdots \otimes u_m) \boxtimes
 \overline{ \phi(u_1 \otimes \cdots \otimes u_m) }
\ee
for all $u_i\in \bC^{n_i} $.
The map $\kappa_\phi$ is called the {\em $\phi$-Kronecker} flattening
generated by $\phi$. When $\phi$ is the standard flattening
such that $\phi(a_1 \otimes \cdots a_{m-1} \otimes a_m)
= (a_1 \boxtimes \cdots a_{m-1}) (a_m)^T$, then
$\kappa_\phi$ is the linear map such that
\be \label{flat:Kron:m=2}
\kappa_\phi \Big( {\sum}_i \lmd_i [u^1_i, \ldots, u^m_i]_{\oh} \Big)
= {\sum}_i \lmd_i
Z_i \boxtimes \overline{ Z_i }
\ee
where $Z_i := (u_i^1 \boxtimes \cdots \boxtimes u_i^{m-1}) (u_i^m)^T$.
The map $\kappa_\phi$ in \reff{flat:Kron:m=2}
is called the {\em canonical Kronecker flattening}.

\begin{lemma} \label{lm:kron:flat}
Let $\phi$ be a flattening map on $\cpx^{n_1 \times \cdots n_m}$
and $\kappa_\phi$ be the corresponding $\phi$-Kronecker flattening.
Then, for each $\cH \in \cpx^{[n_1, \ldots, n_m]}$,
\be  \label{kronflat:rkrl}
\hrank(\mathcal{H}) \ge  \hbrank(\mathcal{H})
\ge \rank \kappa_\phi (\cH).
\ee
\end{lemma}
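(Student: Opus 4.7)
The plan is to mirror the structure of Lemma \ref{lema:matrix rank}, since the Kronecker flattening $\kappa_\phi$ plays a role analogous to the Hermitian flattening $\mathfrak{m}$. The first inequality $\hrank(\cH) \geq \hbrank(\cH)$ is immediate from Definition \ref{def:border:HR} by taking the constant sequence $\cH_k \equiv \cH$.

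For the second inequality, I would first verify the key structural observation that $\kappa_\phi$ sends each rank-$1$ Hermitian tensor to a matrix of rank at most one. By \reff{kronflat:kapphi}, $\kappa_\phi([u_1, \ldots, u_m]_\oh) = \phi(u_1 \otimes \cdots \otimes u_m) \boxtimes \overline{\phi(u_1 \otimes \cdots \otimes u_m)}$. Since $\phi$ is a matrix flattening, it sends the rank-$1$ tensor $u_1 \otimes \cdots \otimes u_m$ to a rank-$\leq 1$ matrix, and the Kronecker identity $\rank(A \boxtimes B) = \rank(A)\cdot \rank(B)$ then gives rank at most $1$ for the product.

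Next, invoking the linearity of $\kappa_\phi$, any Hermitian decomposition $\cH = \sum_{i=1}^r \lambda_i [u_i^1, \ldots, u_i^m]_\oh$ expresses $\kappa_\phi(\cH)$ as a real linear combination of $r$ rank-$\leq 1$ matrices, so $\rank \kappa_\phi(\cH) \leq r$. Taking the shortest such decomposition yields $\rank \kappa_\phi(\cH) \leq \hrank(\cH)$.

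Finally, to upgrade this to the border rank inequality, pick a sequence $\{\cH_k\} \subseteq \bC^{[n_1,\ldots,n_m]}$ with $\cH_k \to \cH$ and each $\hrank(\cH_k) = \hbrank(\cH)$. Continuity of the linear map $\kappa_\phi$ gives $\kappa_\phi(\cH_k) \to \kappa_\phi(\cH)$, and the previous paragraph yields $\rank \kappa_\phi(\cH_k) \leq \hbrank(\cH)$ for every $k$. Because the locus of matrices of rank at most $r$ is closed in the Euclidean topology (being the vanishing set of all $(r+1)\times(r+1)$ minors), passing to the limit gives $\rank \kappa_\phi(\cH) \leq \hbrank(\cH)$. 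I do not anticipate a genuine obstacle: the only point requiring care is confirming that an arbitrary matrix flattening $\phi$ sends rank-$1$ tensors to rank-$\leq 1$ matrices, which is built into the notion of a matrix flattening and is precisely what allows the Kronecker-product rank argument to go through.
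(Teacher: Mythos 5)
Your proof is correct and takes essentially the approach the paper intends: the paper omits the proof of this lemma, remarking only that it is an analogue of Lemma~\ref{lema:matrix rank}, and your argument (linearity of $\kappa_\phi$, the observation that $\kappa_\phi$ sends rank-$1$ Hermitian tensors to rank-$\leq 1$ matrices via the Kronecker product identity, continuity, and lower-semicontinuity of matrix rank) is precisely that analogue. Your extra care in checking that an arbitrary matrix flattening $\phi$ sends rank-$1$ tensors to rank-$\leq 1$ matrices fills in the one step the paper leaves implicit.
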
 

The above is an analogue of Lemma~\ref{lema:matrix rank}.
We omit its proof for cleanness of the paper.
The Hermitian and Kronecker flattening
may give different lower bounds for Hermitian ranks, as shown below.

\begin{example}\label{prop:rank 1 flattening}
For $m=2$ and $n>1$, consider the Hermitian tensor in $\re^{[n,n]}$
\[
\cH = {\sum}_{i,j=1}^n e_i\otimes e_i \otimes  e_j \otimes e_j
= \big( {\sum}_{i=1}^n e_i\otimes e_i \big) \otimes \big({\sum}_{i=1}^n e_i\otimes e_i \big).
\]
Let $\kappa_\phi$ be the canonical Kronecker flattening as in
\reff{flat:Kron:m=2}, then
\[
\mathfrak{m}(\cH) = \big({\sum}_{i=1}^n e_i \boxtimes e_i \big)
\big({\sum}_{i=1}^n e_i \boxtimes e_i\big)^T, \,
\kappa_\phi (\cH ) =
\big({\sum}_{i=1}^n e_i e_i^T \big) \boxtimes
\big({\sum}_{i=1}^n e_i e_i^T \big) = I_{n^2}.
\]
By Lemma~\ref{lm:kron:flat},
$\hrank (\cH) \geq \rank \kappa_\phi (\cH ) =  n^2$
while $\rank \, \mathfrak{m}(\cH) = 1$ .
Indeed, we further have a sharper lower bound
\[
\hrank(\cH)   \ge  n^2+1.
\]
Suppose otherwise that $\hrank(\cH) = n^2$, say,
$
\cH = {\sum}_{i=1}^{n^2}\lambda_i[u_i,v_i]_\oh
$
for $\lmd_i \in \re$ and $u_i,v_i \in \cpx^n$, then
\[
\kappa_\phi (\cH ) =I_{n^2}
= {\sum}_{i=1}^{n^2} \lambda_i(u_i\cdot v_i^T) \boxtimes ( \overline{u_i}\cdot \overline{v_i}^T )
= {\sum}_{i=1}^{n^2} \lambda_i(u_i \boxtimes \overline{u_i}) (v_i \boxtimes \overline{v_i})^T .
\]
Let
\[
U=[\lambda_1u_1 \boxtimes \overline{u_1},\ldots, \lambda_{n^2} u_{n^2} \boxtimes \overline{u_{n^2}}],
\quad
V=[v_1 \boxtimes \overline{v_1},\ldots, v_{n^2} \boxtimes \overline{v_{n^2}}].
\]
Then $U,V$ are square matrices of length $n^2$ and
\[
 UV^T=I_{n^2} \Rightarrow V^TU=I_{n^2} \Rightarrow
 \lambda_j(v_i \boxtimes \overline{v_i})^T(u_j \boxtimes \overline{u_j})=\left \{
 \begin{array}{ll}
  1 & i=j, \\
  0 & i\neq j .
 \end{array} \right.
\]
For $i\neq j$, we have
\[
(v_i \boxtimes \overline{v_i})^T(u_j \boxtimes \overline{u_j})
=(v_i^Tu_j)\boxtimes (\overline{v_i}^T \overline{u_j})=|v_i^Tu_j|^2=0 \Rightarrow v_i^Tu_j=0.
\]
Thus, $u_2,\ldots,u_{n^2} \in v_1^\perp$ and
\[
r := \dim( \mbox{span} \{u_2,\ldots,u_{n^2}\}) \le n-1.
\]
Let $\{s_1,\ldots,s_r\}$ be a basis for
$\mbox{span}\{u_2,\ldots,u_{n^2}\}$.
For each $i=2,3,\ldots n^2$, $u_i \boxtimes \overline{u}_i$
belongs to the span of the set $\{s_p\boxtimes \overline{s}_q\}_{1\le p,q\le r}$, so
\[
\dim \Big(\mbox{span} \{u_i \boxtimes \overline{u}_i\}_{i=2}^{n^2} \Big) \le
\dim \Big(\mbox{span} \{s_p\boxtimes \overline{s}_q\}_{1\le p,q\le r} \Big) =r^2.
\]
This implies that
\[
n^2 = \rank(U) \le 1 + \dim \Big(
\mbox{span} \{u_i \boxtimes \overline{u}_i\}_{i=2}^{n^2} \Big)
\le r^2+1 \le (n-1)^2+1.
\]
However, $n^2>(n-1)^2+1$ when $n\ge 2$.
This is a contradiction, so $\hrank(\cH)\ge n^2+1$.
For the case $n=2$, $\hrank(\cH) = n^2+1$,
because we have a Hermitian decomposition of length $5$
(in the following $c := \sqrt{1+\sqrt{2}}$):
\begin{multline*}
\frac{1}{2c^4-2} \Bigg(
    \left[
    \begin{pmatrix}
        c \\ 1
    \end{pmatrix},
    \begin{pmatrix}
        c \\ 1
    \end{pmatrix} \right ]_\oh
    +
    \left[
    \begin{pmatrix}
        c \\ -1
    \end{pmatrix},
    \begin{pmatrix}
        c \\ -1
    \end{pmatrix} \right ]_\oh
    -
    \left[
    \begin{pmatrix}
        1 \\ c \sqrt{-1}
    \end{pmatrix},
    \begin{pmatrix}
        1 \\ c \sqrt{-1}
    \end{pmatrix} \right ]_\oh  \\
-  \left[
    \begin{pmatrix}
        1 \\ -c \sqrt{-1}
    \end{pmatrix},
    \begin{pmatrix}
        1 \\ -c \sqrt{-1}
    \end{pmatrix} \right ]_\oh \Bigg)
+  2\left[
    \begin{pmatrix}
        0 \\ 1
    \end{pmatrix},
    \begin{pmatrix}
        0 \\ 1
    \end{pmatrix} \right ]_\oh.
\end{multline*}
When $n>2$, the true value of $\hrank(\cH)$ is not known to the authors.
\end{example}

\subsection{Orthogonal decompositions}
\label{ssc:spec}

For each $\mathcal{U} \in \mathbb{C}^{n_1\times\cdots\times n_m}$,
the conjugate tensor product $\mathcal{U}\otimes \overline{ \mc{U} }$
is always Hermitian. 
In fact, each Hermitian tensor can be written as a sum of
such conjugate tensor products \cite{Ni19}.
For each $\mH \in \bC^{[n_1, \ldots, n_m]}$,
its Hermitian flattening matrix $H = \mathfrak{m}(\mH)$ is Hermitian.
Let $s := \rank H$ and suppose $H$ has the spectral decomposition
\[
H = \lmd_1 q_1 q_1^* + \cdots + \lmd_s q_s q_s^*,
\]
where $\lmd_i$'s are the real eigenvalues
and $q_1,\ldots, q_s$ are the orthonormal eigenvectors in $\cpx^N$.
Let $\mc{U}_i$ be the tensor in $\bC^{n_1\times\cdots\times n_m}$
such that $q_i = \mbox{vec}( \mc{U}_i )$, then
\be  \label{Eq:tensormatrixdecom}
\cH = {\sum}_{i=1}^s \lambda_i
\mathcal{U}_i \otimes \overline{\mathcal{U}_i}.
\ee
Note each $\| \mc{U}_i \|= \|q_i\| = 1$ and
$\langle \mathcal{U}_i, \mathcal{U}_j \rangle = q_j^*q_i = 0$ for $i \ne j$.
%
%

In \reff{Eq:tensormatrixdecom}, if each $\mc{U}_i$ is a rank-$1$ tensor, then
it gives an orthogonal Hermitian decomposition.
As in \cite{Ni19}, $\cH$ is called {\it unitarily Hermitian decomposable}
if $\mH = \sum_{i=1}^r \lmd_i [u_i^1, \ldots, u_i^m]_\oh$
for real scalars $\lmd_i$ and unit length vectors $u_i^j$ such that
\be
\big( \big(u_i^1 \big)^*u_j^1 \big)   \cdots
\big( \big(u_i^m\big)^*u_j^m \big)   = 0  \quad ( i \ne j ).
\ee
If all $u_i^j$ are real, then such $\cH$
is called {\it orthogonally Hermitian decomposable}.
For convenience, $\cH$ is said to be
$\mathbb{U}$-Hermitian (resp., $\mathbb{O}$-Hermitian) decomposable
if it is unitarily (resp., orthogonally) Hermitian decomposable.
The detection of $\mathbb{U}$/$\mathbb{O}$-Hermitian decomposability
can be done by checking its Hermitian flattening matrix.
Note that
$\mH = \sum_{i=1}^r \lmd_i [u_i^1, \ldots, u_i^m]_\oh$
if and only if
\[
\mathfrak{m}(\mH) = {\sum}_{i=1}^r \lmd_i  (u_i^1 \boxtimes \cdots \boxtimes u_i^m)
(u_i^1 \boxtimes \cdots \boxtimes u_i^m)^*.
\]
When $\mH$ is $\mathbb{U}$/$\mathbb{O}$-Hermitian decomposable,
the above gives a spectral decomposition for $H$.
When nonzero eigenvalues of $H$ are distinct from each other,
its spectral decomposition is unique.
For such cases, $\cH$ is  $\mathbb{U}$/$\mathbb{O}$-Hermitian decomposable
if and only if each $\rank \mathcal{U}_i = 1$.
When $H$ has a repeated nonzero eigenvalue,
deciding $\mathbb{U}$/$\mathbb{O}$-Hermitian decomposability becomes harder.
We refer to \cite{Ni19,QiLuo17} for more about tensor eigenvalues.

\section{PSD Hermitian tensors}
\label{sc:nhe}

A Hermitian tensor $\cH$ is uniquely determined
by the multi-quadratic conjugate polynomial
$
\cH(x,\overline{x}) := \langle \cH,  [x_1, \ldots, x_m]_{\otimes_h}\rangle,
$
in the tuple $x := (x_1, \ldots, x_m)$ of complex vector variables
$x_i \in \cpx^{n_i}$.
Like the matrix case, positive semidefinite Hermitian tensors
can be naturally defined \cite{Ni19}.

\begin{definition} \label{def:nng}
Let $\F = \cpx$ or $\re$.
A Hermitian tensor $\cH \in \F^{[n_1,\ldots,n_m]}$
is called {\it $\F$-positive semidefinite} ($\F$-psd)
if $\cH(x,\overline{x}) \geq 0$ for all $x_i \in \F^{n_i}$.
Moreover, if $\cH(x,\overline{x}) > 0$ for all $0 \ne x_i \in \F^{n_i}$,
then $\cH$ is called {\it $\F$-positive definite} ($\F$-pd).
\end{definition}

For convenience, a complex (resp., real) Hermitian tensor
is called psd if it is $\cpx$-psd (resp., $\re$-psd).
Denote the cone of $\F$-psd Hermitian tensors
\be \label{psdcone:CR}
\baray{rcl}
\mathscr{P}_{\F}^{[n_1,\ldots,n_m]} & := &
\left\{\cH \in \F^{[n_1,\ldots,n_m]}: \,
\cH(x,\overline{x}) \geq 0 \, \forall \, x_i \in \F^{n_i}
\right \}.
\earay
\ee

\begin{example} \label{ex:CR-psd}
(i) Consider $\cH \in \bC^{[3,3]}$ such that
$\cH(x,y) = \langle \cH, [x, y]_\oh \rangle$ is the following conjugate polynomial
(for cleanness of display, the variable $x_1$ is changed to $x :=(x_1, x_2, x_3)$
and $x_2$ is changed to $y :=(y_1, y_2, y_3)$):
\begin{eqnarray*}
&& |x_1|^2|y_1|^2+|x_2|^2|y_2|^2+|x_3|^2|y_3|^2+2(|x_1|^2|y_2|^2+|x_2|^2|y_3|^2+|x_3|^2|y_1|^2)\\
&& \quad \quad \quad -(x_1\overline{x_2}y_1\overline{y_2}+ \overline{x_1}x_2\overline{y_1}y_2
+x_1\overline{x_3}y_1\overline{y_3}+ \overline{x_1}x_3\overline{y_1}y_3
+x_2\overline{x_3}y_2\overline{y_3}+ \overline{x_2}x_3\overline{y_2}y_3).
\end{eqnarray*}
Since $\cH(x,y) \geq 0$ for all real $x,y$ (see \cite{NieZha16}),
the tensor $\cH$ is $\re$-psd. In fact, it is also $\bC$-psd, because
\[
\baray{rl}
\cH(x,y) =&|x_1|^2|y_1|^2+|x_2|^2|y_2|^2+|x_3|^2|y_3|^2+2(|x_1|^2|y_2|^2+|x_2|^2|y_3|^2+|x_3|^2|y_1|^2) \\
	& -2\big(\mbox{Re}(x_1\overline{x_2}y_1\overline{y_2})+ \mbox{Re}(x_1\overline{x_3}y_1\overline{y_3})+\mbox{Re}(x_2\overline{x_3}y_2\overline{y_3})\big) \\
\ge & |x_1|^2|y_1|^2+|x_2|^2|y_2|^2+|x_3|^2|y_3|^2
       +2(|x_1|^2|y_2|^2+|x_2|^2|y_3|^2+|x_3|^2|y_1|^2)) \\
&  -2(|x_1x_2y_1y_2|+|x_1x_3y_1y_3|+|x_2x_3y_2y_3|) \\
	=& \cH(\hat{x},\hat{y} ) \ge 0,
\earay
\]
where $\hat{x} :=(|x_1|,|x_2|,|x_3|) $ and $\hat{y} :=(|y_1|,|y_2|,|y_3|)$ are real.
\\
(ii) Consider $\cH\in \bC^{[2,2]} $ such that
\[
\cH_{1111}=\cH_{1122}=\cH_{2211}=1, \quad \cH_{1221}=\cH_{2112}=-1
\]
and all other entries are zeros, so
(for cleanness, the variable $x_1$ is changed to
$x := (x_1, x_2)$ and $x_2$ is changed to $y := (y_1, y_2)$):
\[
\cH(x,y)=|x_1|^2|y_1|^2+x_1\overline{x}_2y_1\overline{y}_2+\overline{x}_1x_2\overline{y}_1y_2
-x_1\overline{x}_2\overline{y}_1y_2-\overline{x}_1x_2y_1\overline{y}_2.
\]
When $x,y$ are real,
$
\cH(x,y)=x_1^2y_1^2 \ge 0.
$
This tensor is $\re$-psd but not $\bC$-psd,
because for $x=y=(\sqrt{-1},1)$, $\cH(x,y)=1-1-1-1-1=-3 < 0$.
\end{example}

A $\re$-psd Hermitian tensor is not necessarily $\bC$-psd.
However, for $\re$-Hermitian decomposable tensors,
they are equivalent.

\begin{prop} \label{prop:Rpsd=Cpsd}
For $\cH \in \re_D^{[n_1,\ldots,n_m]}$,
$\cH$ is $\re$-psd if and only if $\cH$ is $\cpx$-psd.
\end{prop}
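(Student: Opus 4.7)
The direction $\cpx$-psd $\Rightarrow$ $\re$-psd is immediate from $\re^{n_i}\subseteq\cpx^{n_i}$, so the work lies in the reverse implication. The plan is to exploit the hypothesis $\cH\in\re_D^{[n_1,\ldots,n_m]}$ to fix a real Hermitian decomposition
\[
\cH \,=\, {\sum}_{i=1}^r \lmd_i\,[u_i^1,\ldots,u_i^m]_\oh,
\qquad \lmd_i\in\re,\ u_i^k\in\re^{n_k},
\]
and then evaluate the associated conjugate polynomial at an arbitrary complex argument.

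Using the formula derived just after \reff{H(x):hermSq}, pairing $\cH$ with $[x_1,\ldots,x_m]_\oh$ gives
\[
\cH(x,\overline{x}) \,=\, {\sum}_{i=1}^r \lmd_i\,\prod_{k=1}^m \bigl|(u_i^k)^T x_k\bigr|^2,
\]
where the transpose (not the conjugate transpose) appears because each $u_i^k$ is real. The key observation is that for a real vector $u$ and a complex vector $x=a+\sqrt{-1}\,b$ with $a,b\in\re^{n_k}$, one has the identity $|u^T x|^2 = (u^T a)^2 + (u^T b)^2$. Writing each $x_k = a_k+\sqrt{-1}\,b_k$ and denoting $z_k^0:=a_k$, $z_k^1:=b_k$, the product expands as
\[
\prod_{k=1}^m \bigl|(u_i^k)^T x_k\bigr|^2 \,=\, \sum_{\eps\in\{0,1\}^m}\prod_{k=1}^m \bigl((u_i^k)^T z_k^{\eps_k}\bigr)^2.
\]

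Substituting this into the previous formula and interchanging the finite sums yields
\[
\cH(x,\overline{x}) \,=\, \sum_{\eps\in\{0,1\}^m}\Big({\sum}_{i=1}^r \lmd_i\,\prod_{k=1}^m\bigl((u_i^k)^T z_k^{\eps_k}\bigr)^2\Big)
\,=\, \sum_{\eps\in\{0,1\}^m} \cH(z^\eps, z^\eps),
\]
where $z^\eps:=(z_1^{\eps_1},\ldots,z_m^{\eps_m})$ is a tuple of \emph{real} vectors and the last equality uses Lemma~\ref{rH:poly=decomp} applied in reverse to the real decomposition of $\cH$. Since $\cH$ is $\re$-psd, every term $\cH(z^\eps,z^\eps)$ is nonnegative, so $\cH(x,\overline{x})\geq 0$ for all complex $x$, i.e., $\cH$ is $\cpx$-psd.

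The only delicate step is the identity $|u^T x|^2=(u^T a)^2+(u^T b)^2$, which crucially requires $u$ to be real and is exactly what the $\re$-Hermitian decomposability buys us; without this hypothesis the cross terms $\mbox{Re}\bigl((u^T a)(u^T b)\sqrt{-1}\bigr)$ would not vanish and the reduction to real evaluations would fail. This mirrors Example~\ref{ex:CR-psd}(ii), where the tensor is $\re$-psd but not $\cpx$-psd precisely because it is not $\re$-Hermitian decomposable.
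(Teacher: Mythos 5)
Your proof is correct and follows essentially the same route as the paper: write $x_k=a_k+\sqrt{-1}\,b_k$, use the fact that for a real $u$ one has $|u^Tx_k|^2=(u^Ta_k)^2+(u^Tb_k)^2$, expand the $m$-fold product, and reduce the complex evaluation $\cH(x,\overline x)$ to a sum of $2^m$ real evaluations, each nonnegative by $\re$-psd. One small imprecision in wording: the equality $\sum_i\lmd_i\prod_k((u_i^k)^Tz_k^{\eps_k})^2=\cH(z^\eps,z^\eps)$ is just the easy (``if'') direction of Lemma~\ref{rH:poly=decomp}, i.e.\ the forward evaluation \reff{real:H(x,x)=sum:qi}, not the converse ``applied in reverse'' -- but the computation itself is exactly right.
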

\begin{proof}
The ``if" direction is obvious.
We prove the ``only if" direction.
For $v^i\in \bC^{n_i} $, write $v^j=x^j+\sqrt{-1}y^j$ with $x^j,y^j\in \re^{n_j}$.
Then, we have
\begin{eqnarray*}
\langle [u^1,\ldots,u^m]_\oh, [v^1,\ldots,v^m]_\oh \rangle
&=& \Pi_{j=1}^m (u^j)^Tv^j \cdot (u^j)^T \overline{v}^j = \Pi_{j=1}^m |(u^j)^T v^j|^2 \\
= \Pi_{j=1}^m (|(u^j)^Tx^j|^2+|(u^j)^Ty^j|^2)
 &=&  \sum_{z^j\in \{x^j,y^j\}}
 \langle [u^1,\ldots, u^m]_\oh, [z^1,\ldots,z^m]_\oh   \rangle.
\end{eqnarray*}
Since $\cH \in \re_D^{[n_1,\ldots,n_m]}$,
it is a sum of real rank-$1$ real Hermitian tensors, so
\[
\left \langle \cH, [v^1,\ldots,v^m]_\oh \right \rangle
= \sum_{z^j\in \{x^j,y^j\}} \left \langle \cH, [z^1,\ldots,z^m]_\oh \right \rangle \ge 0.
\]
If $\cH$ is $\re$-psd, then $\cH$ is also $\cpx$-psd.
\end{proof}

Clearly, $\mathscr{P}_{\F}^{[n_1,\ldots,n_m]}$
is a closed convex cone. As in \cite{BV}, a cone is said to be
{\it solid} if it has nonempty interior; it is said to be
{\it pointed} if it does not contain any line through origin;
a closed convex cone is said to be {\it proper} if
it is both solid and pointed.
The complex cone $\mathscr{P}_{\cpx}^{[n_1,\ldots,n_m]}$
is proper, as mentioned in \cite{Ni19}.
However, the real cone $\mathscr{P}_{\re}^{[n_1,\ldots,n_m]}$ is not proper.
In fact, it is solid but not pointed.
%
%

\begin{proposition} \label{pro:psdHT:proper}
For $m > 1$ and $n_1,\ldots, n_m > 1$,
the cone $\mathscr{P}_{\cpx}^{n_1,\ldots,n_m}$ is proper,
while $\mathscr{P}_\re^{[n_1,\ldots,n_m]}$ is solid but not pointed.
\end{proposition}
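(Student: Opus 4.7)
The plan is to verify the three distinct claims separately: both cones are solid, the complex cone $\mathscr{P}_{\cpx}^{[n_1,\ldots,n_m]}$ is pointed, and the real cone $\mathscr{P}_{\re}^{[n_1,\ldots,n_m]}$ is not pointed. A single interior point will handle solidity of both cones simultaneously, so the two cones diverge only in the pointedness question.

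For solidity, I would exhibit the explicit interior point
\[
\cH_0 \, := \, {\sum}_{i_1=1}^{n_1} \cdots {\sum}_{i_m=1}^{n_m}
[e_{i_1}, \ldots, e_{i_m}]_\oh,
\]
whose conjugate polynomial is $\cH_0(x,\overline{x}) = \|x_1\|^2 \cdots \|x_m\|^2$. On the compact product of unit spheres $\{(x_1,\ldots,x_m) : \|x_i\|=1, \, i=1,\ldots,m\}$ this equals $1$, while for any Hermitian tensor $\cH$ of Hilbert-Schmidt norm $1$ the quantity $|\cH(x,\overline{x})|$ is uniformly bounded by some constant $M>0$ on that set. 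Hence for $0<\eps<1/M$ the perturbation $\cH_0+\eps\cH$ satisfies $(\cH_0+\eps\cH)(x,\overline{x})\ge 1-\eps M>0$ on the unit multi-sphere; by multi-homogeneity it is nonnegative on all of $\F^{n_1}\times\cdots\times\F^{n_m}$ (vanishing only when some $x_i=0$). This gives an open ball around $\cH_0$ inside $\mathscr{P}_{\F}^{[n_1,\ldots,n_m]}$ for both $\F=\cpx$ and $\F=\re$.

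For pointedness of $\mathscr{P}_{\cpx}^{[n_1,\ldots,n_m]}$, suppose both $\cH$ and $-\cH$ lie in the cone. Then $\cH(x,\overline{x})=\langle\cH,[x_1,\ldots,x_m]_\oh\rangle\equiv 0$ for all $x_i\in\cpx^{n_i}$. By~\reff{Eq:rank-oneHD} the rank-$1$ Hermitian tensors $[x_1,\ldots,x_m]_\oh$ span $\bC^{[n_1,\ldots,n_m]}$ over $\re$, so $\langle\cH,\mB\rangle=0$ for every Hermitian $\mB$, forcing $\cH=0$.

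For non-pointedness of $\mathscr{P}_{\re}^{[n_1,\ldots,n_m]}$, I would construct a nonzero real Hermitian tensor $\cH$ whose induced real form $\cH(x,x)$ vanishes identically. Beginning from $m=2$, $n_1=n_2=2$, set
\[
\cH_{1122} \,=\, \cH_{2211} \,=\, 1, \quad \cH_{1221} \,=\, \cH_{2112} \,=\, -1,
\]
with all other entries zero; each of the four nonzero contributions to $\cH(x,x)$ reduces to the same monomial $(x_1)_1(x_1)_2(x_2)_1(x_2)_2$, and the signed coefficients cancel: $1+1-1-1=0$. For general $m>1$ and $n_k\ge 2$, embed the same construction by placing the four nonzero entries at positions whose remaining coordinates are all~$1$; the common factor $\prod_{k\ge 3}(x_k)_1^2$ appears uniformly in all four monomials, so the cancellation persists. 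Both $\cH$ and $-\cH$ then belong to $\mathscr{P}_{\re}^{[n_1,\ldots,n_m]}$, showing the cone contains a line through the origin.

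The only substantive step is the kernel construction in the last part; the rest is essentially bookkeeping. The key idea I exploit there is that, on real inputs, the multi-quadratic form only sees an index pair $(I,J)$ through the multisets $\{i_k,j_k\}$ at each position, whereas the Hermitian tensor itself distinguishes the pairs; this mismatch produces a nontrivial kernel precisely when $m\ge 2$ and some $n_k\ge 2$, matching the hypotheses.
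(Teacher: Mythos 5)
Your proof is correct. The solidity argument and the complex-cone pointedness argument mirror the paper's: the interior point $\cH_0$ you exhibit is exactly the identity tensor $\mathcal{I}$ used in the paper's proof, and the claim that $\cH(x,\overline{x})\equiv 0$ forces $\cH=0$ is the same linear-algebra step (you phrase it via the span of the rank-one Hermitian tensors, the paper via the vanishing of coefficients of the conjugate polynomial; these are interchangeable). Where you take a genuinely different route is in showing $\mathscr{P}_\re^{[n_1,\ldots,n_m]}$ is not pointed. The paper argues abstractly: since $\re_D^{[n_1,\ldots,n_m]}$ is a proper subspace of $\re^{[n_1,\ldots,n_m]}$ by the dimension comparison \reff{dimRD<dimR}, its orthogonal complement $C$ is nontrivial, and every $\mathcal{X}\in C$ satisfies $\pm\mathcal{X}\in\mathscr{P}_\re^{[n_1,\ldots,n_m]}$ because $[x_1,\ldots,x_m]_\oh\in\re_D^{[n_1,\ldots,n_m]}$ for real $x_j$. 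You instead produce an explicit nonzero element of this line --- the tensor with $\cH_{1122}=\cH_{2211}=1$, $\cH_{1221}=\cH_{2112}=-1$, the rest zero, padded to higher modes and dimensions by fixing all remaining indices at $1$ --- and verify by direct cancellation that $\cH(x,x)\equiv 0$ on real inputs. Your tensor is in fact a concrete element of the paper's subspace $C$: by Theorem~\ref{Th:real Herm}, any $\mA\in\re_D$ has $\mA_{1122}=\mA_{1221}$ and $\mA_{2211}=\mA_{2112}$, so $\langle\cH,\mA\rangle=0$; it is also Example~\ref{ex:CR-psd}(ii) with the $\cH_{1111}$ entry stripped away, i.e., the kernel part of that example. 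The explicit construction is self-contained and elementary, avoiding both Theorem~\ref{Th:real Herm} and the dimension count; the paper's argument is shorter once those earlier results are in place and exposes the structural reason (a whole linear subspace of the cone) rather than a single witness. Both are sound.
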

\begin{proof}
Let $\mc{I} \in \F^{[n_1,\ldots,n_m]}$ be the identity tensor,
i.e.,
$
\mc{I}(x, \overline{x}) = (x_1^*x_1)\cdots (x_m^*x_m).
$
The conjugate polynomial $\mc{I}(x, \overline{x})$
is positive definite on the spheres $\|x_i \| = 1$.
Thus, for $\eps > 0$ sufficiently small, all Hermitian tensors
$\cH \in \F^{[n_1,\ldots,n_m]}$
with $\| \cH - \mc{I} \| < \eps$ belong to the cone
$\mathscr{P}_\F^{[n_1,\ldots,n_m]}$, for both $\F= \cpx, \re$.
That is, $\mc{I}$ is an interior point,
and hence $\mathscr{P}_\F^{[n_1,\ldots,n_m]}$ is solid.

The complex cone $\mathscr{P}_{\cpx}^{n_1,\ldots,n_m}$ is pointed.
For each $\mH \in \mathscr{P}_{\cpx}^{n_1,\ldots,n_m}
\cap -\mathscr{P}_{\cpx}^{n_1,\ldots,n_m}$,
$\mH(x, \overline{x})$ must be identically zero for all complex $x_i$.
The conjugate polynomial
\[
\mH(x, \overline{x}) = {\sum}_{i_1 \ldots i_m j_1 \ldots j_m}
\mH_{i_1 \ldots i_m j_1 \ldots j_m}
x_{1,i_1} \cdots x_{m,i_m} \overline{x}_{1, j_1} \cdots \overline{x}_{m,j_m}
\]
is identically zero if and only all its coefficients are zero,
i.e., $\mH = 0$. This implies that $\mathscr{P}_{\cpx}^{n_1,\ldots,n_m}$
does not contain any line through origin, i.e., it is pointed.

The real cone $\mathscr{P}_{\re}^{n_1,\ldots,n_m}$ is not pointed.
For $m > 1$ and $n_1,\ldots, n_m > 1$, the set
$\re_D^{[n_1,\ldots,n_m]}$ is a proper subspace of
$\re^{[n_1,\ldots,n_m]}$. Let $C$ be the orthogonal complement of
$\re_D^{[n_1,\ldots,n_m]}$ in $\re^{[n_1,\ldots,n_m]}$.
Then, for all $0 \ne \mc{X} \in C$ and for all $x_j \in \re^{n_j}$,
$
\langle \mc{X}, [x_1, \ldots, x_m]_\oh \rangle = 0
$
because $[x_1, \ldots, x_m]_\oh \in \re_D^{[n_1,\ldots,n_m]}$.
This implies $C \subseteq \mathscr{P}_{\re}^{n_1,\ldots,n_m}$.
So, $\mathscr{P}_{\re}^{n_1,\ldots,n_m}$ contains a line through the origin
and hence it is not pointed.
\end{proof}

\subsection{Hermitian eigenvalues}

For a Hermitian tensor $\cH \in \bC^{[n_1,\ldots,n_m]}$,
consider the sphere optimization problem
\begin{equation}
\label{minH:||x||=1}
\left\{ \baray{rl}
\min  & \cH(x,\overline{x}) := \langle \cH, [x_1, \ldots, x_m]_{\otimes_h} \rangle \\
\mbox{s.t.}  & \|x_i\| = 1, \, x_i \in \cpx^{n_i}, \, i=1, \ldots, m.
\earay \right.
\end{equation}
Since $\cH(x,\overline{x})$ is conjugate quadratic in each $x_k$, we can write it as
\[
\cH(x,\overline{x}) = x_k^* \big( H_k( x_1, \ldots, x_{k-1}, x_{k+1}, \ldots, x_m ) \big) x_k,
\]
where $H_k$ is a Hermitian matrix polynomial.
The $H_k$ is also conjugate quadratic in $x_i$ for all $i \ne k$.
Define the tensor-vector product
\be \label{tv:prod:(k)}
\cH \times_{(k)} \big(x_1, \ldots, x_m \big)
\, := \, H_k( x_1, \ldots, x_{k-1}, x_{k+1}, \ldots, x_m ) x_k.
\ee
The Karush-Kuhn-Tucker (KKT) optimality conditions  for \reff{minH:||x||=1} are
\be  \nn
\cH \times_{(k)} \big(x_1, \ldots, x_m \big)  = \lambda_k x_k,
\, k= 1, \ldots, m.
\ee
where $\lmd_1, \ldots, \lmd_m$ are the Lagrange multipliers.
Since $\cH$ is Hermitian and each $x_k^*x_k=1$,
we must have $\lmd_k \in \re$ and
\[
\lmd_1 = \cdots = \lmd_m = \cH(x,\overline{x}).
\]
They are equal to each other and are all real \cite{Ni19}.

\begin{definition}
\rm  \label{def:HMeig}
(\cite{Ni19})
For $\cH \in \bC^{[n_1,\ldots,n_m]}$,
a scalar $\lmd$ is called a {\em Hermitian eigenvalue} of $\cH$
if there exist complex vectors $u_1, \ldots, u_m$ such that
\be \label{Eq:HermEig}
\left\{\baray{rcl}
\cH \times_{(k)} \big( u_1, \ldots, u_m \big)
&=& \lambda  u_k, \, k = 1,\ldots, m, \\
\|u_1\| =  \cdots = \|u_m\| &=& 1.
\earay \right.
\ee
The $(\lambda; u_1, \ldots, u_m)$
is called a {\it Hermitian eigentuple},
and $u_k$ is called the mode-$k$ {\it Hermitian eigenvector}.
The $(u_1, \ldots, u_m)$
is called a {\it Hermitian eigenvector}.
\end{definition}

For general tensors, similar KKT conditions can be written
and they give unitary eigenvalues \cite{NQB14}.
When $\mH$ is Hermitian, all its Hermitian eigenvalues are real \cite{Ni19}.
The largest (resp., smallest) Hermitian eigenvalue of $\cH$
is the maximum (resp., minimum) value of
$\cH(x,\overline{x})$ over complex spheres $ \| x_i \|=1$.
Therefore, $\mH$ is $\cpx$-psd (resp., $\cpx$-pd)
if and only if all its Hermitian eigenvalues are
nonnegative (resp., greater than zero).
Similarly, for $\cH \in \re^{[n_1,\ldots,n_m]}$,
$\cH$ is $\re$-psd (resp., $\re$-pd)
if and only if all its Hermitian eigenvalues,
which are associated to real eigenvectors, are
nonnegative (resp., greater than zero).

\subsection{Conjugate and Hermitian sum-of-squares}
\label{ssc:CHSOS}

Recall that $\cpx[x]$ denotes the ring of polynomials in
$x = (x_1, \ldots, x_m)$, where each $x_k \in \cpx^{n_k}$,
and $\cpx[x, \overline{x}]$ denotes the ring of conjugate polynomials in $x$.
Psd Hermitian tensors can be detected by SOS decompositions
for conjugate polynomials.

\begin{definition}
\rm \label{df:SOS}
A conjugate polynomial
$f \in \cpx[x, \overline{x}]$ is called a {\it Hermitian sum-of-squares} (HSOS) if
$
f  = |p_1(x)|^2 + \cdots + |p_k(x)|^2
$
for some complex polynomials $p_i \in \cpx[x]$.
It is called a {\it conjugate sum-of-squares} (CSOS) if
$
f =  |q_1(x, \overline{x})|^2 + \cdots + |q_t(x, \overline{x})|^2
$
for some conjugate polynomials $q_i \in \cpx[x, \overline{x}]$.
Denote by $\Sigma [x, \overline{x}]$ (resp., $\Sigma[x]$)
the cone of conjugate (resp., Hermitian) sum-of-squares.
A tensor $\mH \in \bC^{[n_1,\ldots, n_m]}$ is
called HSOS (resp., CSOS) if the corresponding conjugate polynomial
$\cH(x,\overline{x})$ is HSOS (resp., CSOS).
\end{definition}

Clearly, all HSOS and CSOS Hermitian tensors are $\cpx$-psd.
Interestingly, HSOS tensors can be detected by the Hermitian flattening.

\begin{proposition} \label{prop:HSOScond}
For $\mH \in \bC^{[n_1,\ldots, n_m]}$,
$\mH$ is HSOS if and only if $\mathfrak{m}(\mH) \succeq 0$.
\end{proposition}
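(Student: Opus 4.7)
The plan is to recast HSOS as a psd condition on the Hermitian flattening $H := \mathfrak{m}(\cH)$ via the key identity
\[
\cH(x,\overline{x}) \,=\, X^* H X, \qquad X := x_1 \otimes \cdots \otimes x_m \in \cpx^{N}.
\]
I would first verify this identity directly from \reff{HIJ=Hij}: with $H_{IJ} = \cH_{i_1\ldots i_m j_1 \ldots j_m}$ and $X_I = (x_1)_{i_1}\cdots (x_m)_{i_m}$, both sides expand to $\sum_{I,J} H_{IJ}\, \overline{X_I}\, X_J$. Alongside this I would record the essential observation that the set $\{ \overline{X_I} X_J \}_{I,J \in \mc{S}}$ is linearly independent as conjugate polynomials in $(x,\overline{x})$, so any equality $X^* M X = X^* H X$ that holds identically in $x_1,\ldots,x_m$ forces $M = H$.

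The direction $(\Leftarrow)$ is then immediate. Starting from $H \succeq 0$, I would use a spectral decomposition to write $H = \sum_k \overline{d_k}\, d_k^T$ as a sum of psd rank-$1$ Hermitian matrices (taking $d_k := \sqrt{\lambda_k}\, \overline{q_k}$ from the eigenvectors of $H$). Defining $p_k(x) := d_k^T X \in \cpx[x]$, a short calculation gives $|p_k(x)|^2 = X^*(\overline{d_k} d_k^T) X$, and summing recovers $\sum_k |p_k(x)|^2 = X^* H X = \cH(x,\overline{x})$, exhibiting $\cH$ as HSOS.

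The direction $(\Rightarrow)$ is where the real work lies. Starting from $\cH(x,\overline{x}) = \sum_k |p_k(x)|^2$ with $p_k \in \cpx[x]$, the goal is to arrange each $p_k$ in the multi-linear form $p_k(x) = d_k^T X$, because then $H = \sum_k \overline{d_k} d_k^T$ is manifestly psd by the uniqueness observation above. To accomplish the reduction I would decompose each $p_k = \sum_\alpha p_k^{(\alpha)}$ into its multi-homogeneous components in $(x_1,\ldots,x_m)$, substitute $x_j \mapsto t_j x_j$ for independent complex scalars $t_j$, and compare the coefficient of $t_1 \overline{t_1} \cdots t_m \overline{t_m}$ on both sides: the left-hand side contributes $\cH(x,\overline{x})$ while the right-hand side contributes exactly $\sum_k |p_k^{(1,\ldots,1)}(x)|^2$, so one may replace each $p_k$ by its multi-linear part $p_k^{(1,\ldots,1)}$ without loss of generality.

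The main obstacle is precisely this multi-homogenization step. A priori the $p_k$'s could carry non-multi-linear components that cancel nontrivially inside the SOS sum, so one cannot simply truncate each $p_k$ to its multi-linear part. Using independent \emph{complex} parameters $t_j$ (rather than positive real scalars) is what separates the $(x\text{-degree},\,\overline{x}\text{-degree})$ bi-grading and makes the coefficient comparison rigorous; once this reduction is in hand, the equivalence drops out from the bijection between $\cH$ and $H$.
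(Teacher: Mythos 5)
Your proof is correct and follows the same core route as the paper's: both reduce HSOS to positive semidefiniteness of $\mathfrak{m}(\cH)$ via the identity $\cH(x,\overline{x}) = X^*\,\mathfrak{m}(\cH)\,X$ with $X = x_1 \boxtimes \cdots \boxtimes x_m$, and both obtain the $(\Leftarrow)$ direction from a spectral (Gram) decomposition of the flattening matrix. The genuine difference is in the $(\Rightarrow)$ direction: the paper simply ``lets $v_i$ be the vector with $v_i^*X = p_i(x)$,'' which tacitly assumes each $p_i$ is already multilinear in $(x_1,\ldots,x_m)$, while you explicitly justify that reduction by substituting $x_j \mapsto t_j x_j$ for independent complex parameters $t_j$ and comparing the coefficient of $t_1\overline{t_1}\cdots t_m\overline{t_m}$, which isolates $\sum_k |p_k^{(1,\ldots,1)}|^2 = \cH(x,\overline{x})$ and lets you discard the non-multilinear parts without loss. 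This fills a small gap in the published argument; the rest is the same.
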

\begin{proof}
If $\mathfrak{m}(\mH) \succeq 0$, then $\cH$ has the decomposition
\reff{Eq:tensormatrixdecom} with each $\lmd_i >0$. So
$
\cH(x,\overline{x}) = {\sum}_i \lmd_i | \langle \mc{U}_i^*,
x_1 \otimes \cdots \otimes x_m \rangle |^2,
$
and $\cH$ is HSOS. Conversely, if $\cH(x,\overline{x})$ is HSOS, say,
$\cH(x,\overline{x}) =  \sum_{i=1}^k |p_i(x)|^2$,
let $v_i$ be the vectors such that
$
v_i^*(x_1 \boxtimes \cdots \boxtimes  x_m) = p_i(x).
$
Then $\mathfrak{m}(\mH) = v_1 v_1^*+ \cdots + v_k v_k^* \succeq 0$.
\end{proof}

Every HSOS tensor must be CSOS, i.e., $\Sigma[x] \subseteq \Sigma[x,\overline{x}]$.
However, a CSOS tensor is not necessarily HSOS.
The following is such an example.

\begin{example} \label{ex:HSOS}
Let $\cH \in \cpx^{[2,2]}$ be the Hermitian tensor such that
\[
\cH_{1111}=\cH_{2222}=\cH_{1221}=\cH_{2112}=1
\]
and other entries are zeros. Since $\mathfrak{m}(\mH)$
is indefinite, $\cH(x,\overline{x})$ is not HSOS.
However, it is CSOS because
\begin{eqnarray*}
\cH(x,\overline{x})
&=& |(x_1)_1\overline{(x_2)_1}+(x_1)_2\overline{(x_2)_2}|^2.
\end{eqnarray*}
\end{example}

The CSOS Hermitian tensors can be detected by
semidefinite programs~\cite{SDPbk}.
For $\cH \in \bC^{[n_1, \ldots, n_m]}$, if
$
\cH(x,\overline{x}) = |q_1(x, \overline{x})|^2 + \cdots + |q_t(x, \overline{x})|^2
$
for some conjugate polynomials $q_i \in \cpx[x, \overline{x}]$,
then each $q_i$ must have degree $m$ and is linear in $(x_j,\overline{x}_j)$,
for all $j=1,\ldots, m$. Denote the Kronecker product of all vector variables
\be
\mathfrak{b}(x, \overline{x} ) := (x_1,\overline{x}_1)^T \boxtimes \cdots
\boxtimes (x_m,\overline{x}_m)^T.
\ee
For each $q_i$, there exists a coefficient vector $w_i$ such that
$q_i = w_i^* \mathfrak{b}(x, \overline{x} ) $.
The above CSOS decomposition is equivalent to that
\[
\cH(x,\overline{x}) =  \mathfrak{b}(x, \overline{x} )^*
\big( w_1w_1^* + \cdots + w_t w_t^*  \big)
\mathfrak{b}(x, \overline{x} ).
\]

\begin{proposition}
A Hermitian tensor $\cH \in \bC^{[n_1, \ldots, n_m]}$ is CSOS
if and only if there exists a Hermitian matrix $W \succeq 0$ such that
\be \label{H(x)=b(x)*Wb(x)}
\cH(x,\overline{x}) \, = \, \mathfrak{b}(x, \overline{x} )^* \cdot W \cdot
\mathfrak{b}(x, \overline{x} ).
\ee
\end{proposition}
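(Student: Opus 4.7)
The statement is the standard Gram-matrix (a.k.a.\ semidefinite) characterization of sum-of-squares representations, specialized to the conjugate polynomial setting. I will treat the two directions separately.

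For the $(\Leftarrow)$ direction, suppose a Hermitian $W\succeq 0$ satisfies $\cH(x,\overline{x})=\mathfrak{b}(x,\overline{x})^*W\mathfrak{b}(x,\overline{x})$. Spectrally factor $W=\sum_{i=1}^t w_iw_i^*$ and substitute to obtain
$$\cH(x,\overline{x}) \;=\; \sum_{i=1}^t \mathfrak{b}(x,\overline{x})^*w_iw_i^*\mathfrak{b}(x,\overline{x}) \;=\; \sum_{i=1}^t\bigl|w_i^*\mathfrak{b}(x,\overline{x})\bigr|^2,$$
which exhibits $\cH$ as CSOS with witnesses $q_i:=w_i^*\mathfrak{b}(x,\overline{x})\in\cpx[x,\overline{x}]$.

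For the $(\Rightarrow)$ direction, assume $\cH(x,\overline{x})=\sum_{i=1}^t|q_i(x,\overline{x})|^2$ with $q_i\in\cpx[x,\overline{x}]$. As observed in the paragraph preceding the proposition, the multi-bidegree $(1,1)$ of $\cH$ in each pair $(x_k,\overline{x}_k)$ forces each $q_i$ to be affine-linear in each $(x_k,\overline{x}_k)$ --- equivalently, $q_i=w_i^*\mathfrak{b}(x,\overline{x})$ for some coefficient vector $w_i$. Granting this, setting $W:=\sum_{i=1}^t w_iw_i^*$ yields a Hermitian psd matrix with
$$\cH(x,\overline{x}) \;=\; \sum_{i=1}^t\bigl|w_i^*\mathfrak{b}(x,\overline{x})\bigr|^2 \;=\; \mathfrak{b}(x,\overline{x})^*W\mathfrak{b}(x,\overline{x}),$$
as required.

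The main technical obstacle is justifying the multi-degree reduction of the $q_i$'s in the forward direction, which is a Newton-polytope-type statement. The cleanest route I see is to collect all monomials occurring in any $q_i$ into a single vector $v$, write $\cH=v^*\widetilde W v$ with $\widetilde W:=\sum_{i=1}^t w_iw_i^*\succeq 0$, and then block-decompose $\widetilde W$ according to the ``total-degree type'' $d_j=(d_{j,1},\ldots,d_{j,m})$ of the monomial $v_j$, where $d_{j,k}$ is the combined $(x_k,\overline{x}_k)$-degree. Using the multi-scaling invariance $\cH(\lambda x,\overline{\lambda}\,\overline{x})=\prod_k|\lambda_k|^2\,\cH(x,\overline{x})$ (for $\lambda_k\in\cpx^\times$), one isolates isotypic blocks of $\widetilde W$; restricted to any block whose diagonal entries $v_j^*v_j$ are forced outside the support of $\cH$, the block acts as a psd Gram matrix for the zero polynomial, hence vanishes identically by the fact that distinct monomials are linearly independent. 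Combined with the standard observation that $\widetilde W\succeq 0$ and $\widetilde W_{jj}=0$ imply the whole $j$-th row and column vanish, this reduces the active support of $v$ to the entries of $\mathfrak{b}(x,\overline{x})$, after which $W$ is obtained by padding $\widetilde W$ with zeros.
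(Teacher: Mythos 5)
Your proof is the same Gram--matrix argument the paper gives: for $(\Leftarrow)$ spectrally factor $W=\sum_i w_iw_i^*$ and read off $\cH=\sum_i|w_i^*\mathfrak{b}|^2$; for $(\Rightarrow)$ write $q_i=w_i^*\mathfrak{b}(x,\overline{x})$ and set $W=\sum_i w_iw_i^*$. The paper treats the reduction of each $q_i$ to the form $w_i^*\mathfrak{b}(x,\overline{x})$ as already established in the sentence just before the proposition (asserted there without proof), so your attempt to actually justify that step is a welcome addition rather than a deviation.

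Two spots in that justification are imprecise. First, you write that the degree constraint forces each $q_i$ to be \emph{affine-linear} in each pair $(x_k,\overline{x}_k)$ and then say this is ``equivalently'' $q_i=w_i^*\mathfrak{b}(x,\overline{x})$; the latter requires $q_i$ to be \emph{homogeneous} of degree exactly one in each $(x_k,\overline{x}_k)$, with no constant or low-degree tail. The stronger conclusion does hold, because the lowest total-degree parts of $\sum_i|q_i|^2$ are sums of squares and hence cannot cancel, so they must land in the (homogeneous) support of $\cH$; the wording should match. Second, the claim that an off-support block of $\widetilde W$ ``vanishes identically by the fact that distinct monomials are linearly independent'' skips a step, since distinct pairs $(i,j)$ can produce the same monomial $v_j\overline{v_i}$ and cancellation is a priori possible. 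The clean fix is to use the rank-one structure: if $v_B^*\widetilde W_Bv_B\equiv 0$ with $\widetilde W_B=\sum_i (w_i)_B(w_i)_B^*$, then $\sum_i |(w_i)_B^*v_B|^2\equiv 0$, which forces each $(w_i)_B^*v_B\equiv 0$, and since the entries of $v_B$ are distinct monomials this gives $(w_i)_B=0$ and hence $\widetilde W_B=0$. With these two repairs your supplementary argument is sound, and in any case the paper's own proof assumes this reduction silently.
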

\begin{proof}
If $\cH$ is CSOS, we can just let
$W = w_1w_1^* + \cdots + w_t w_t^*$, for the vectors $w_i$ in the above.
If there exists a psd matrix $W$ satisfying \reff{H(x)=b(x)*Wb(x)},
then there must exist vectors $w_i$ such that
$W = w_1w_1^* + \cdots + w_t w_t^*$,
which implies that $\cH$ is CSOS.
\end{proof}

For a given $\cH$, the set of all psd $W$
satisfying \reff{H(x)=b(x)*Wb(x)} is the intersection of
the psd matrix cone and an affine linear subpsace, i.e.,
the set of all required $W$ is given by linear matrix inequalities.
Therefore, CSOS Hermitian tensors can be detected by
solving semidefinite programs.
We refer to \cite{LasBk15,Lau09,Rez00}
for related work about SOS polynomials.

\subsection{The hierarchy of SOS representations}
\label{ssc:hierSOS}

A Hermitian tensor $\mH$ is $\cpx$-psd if and only if
$\mH(x,\overline{x})$ is nonnegative everywhere. It is well-known that
not every nonnegative polynomial is SOS \cite{Rez00}.
Therefore, not all psd Hermitian tensors are SOS.
However, every nonnegative polynomial is a sum of squares of rational functions.
This motivates us to characterize psd Hermitian tensors
by using products of squares.
For powers $k_1, \ldots, k_m \geq 0$, denote
\be \label{Omega:C:k1tokm}
\Omega_\cpx^{k_1 \ldots k_m} = \Big\{
\mH \in \cpx^{[n_1,\ldots, n_m]}:
|x_1|^{2k_1} \cdots |x_m|^{2k_m} \cdot \cH(x,\overline{x}) \in \Sigma[x]
\Big\} .
\ee
Clearly, if $\mH \in \Omega_\cpx^{k_1 \ldots k_m}$,
then $\mH$ must be $\cpx$-psd.
Each $\Omega_\cpx^{k_1 \ldots k_m}$ is a closed convex cone.
We have the following characterization for $\cpx$-psd tensors.

\begin{theorem}  \label{pd:HSOS:cpx}
If $\mH \in \bC^{[n_1,\ldots, n_m]}$
is $\cpx$-positive definite, then there exist powers $k_1, \ldots, k_m \geq 0$
such that $\cH \in \Omega_{k_1 \ldots k_m}$.
Therefore, we have the containment
\be \label{intPC<=HSOS<=PC}
\mbox{int} \Big( \mathscr{P}_\cpx^{[n_1,\ldots, n_m]}  \Big) \, \subseteq \,
  \bigcup_{  k_1, \ldots, k_m \geq 0 } \Omega_\cpx^{k_1 \ldots k_m}
\, \subseteq \, \mathscr{P}_\cpx^{[n_1,\ldots, n_m]} .
\ee
\end{theorem}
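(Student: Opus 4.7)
The plan is as follows. The rightmost containment $\bigcup_{k_1,\ldots,k_m\ge 0}\Omega_\cpx^{k_1\ldots k_m}\subseteq \mathscr{P}_\cpx^{[n_1,\ldots,n_m]}$ is immediate from \reff{Omega:C:k1tokm}: an HSOS conjugate polynomial is nonnegative everywhere, so $|x_1|^{2k_1}\cdots|x_m|^{2k_m}\cH(x,\overline{x})\ge 0$ forces $\cH(x,\overline{x})\ge 0$ whenever every $x_k\ne 0$, and continuity extends this to all $x$. For the leftmost containment, a standard compactness argument on the product of unit spheres shows that the interior of $\mathscr{P}_\cpx^{[n_1,\ldots,n_m]}$ consists exactly of the $\cpx$-positive definite Hermitian tensors, so the left inclusion reduces to the first sentence of the theorem, which is where all the real work sits.

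To prove that every $\cpx$-pd tensor $\cH$ lies in some $\Omega_\cpx^{k_1\ldots k_m}$, I would first extract a uniform positivity bound. The conjugate polynomial $\cH(x,\overline{x})$ is multi-bihomogeneous of bidegree $(1,1)$ in each pair $(x_k,\overline{x}_k)$, and by hypothesis strictly positive on the compact product of unit spheres, so there exists $\eps>0$ with
\[
\cH(x,\overline{x})\;\ge\;\eps\cdot |x_1|^2\cdots|x_m|^2 \qquad \text{for all } x_k\in\cpx^{n_k}.
\]
This is the standard strict-positivity hypothesis needed for a multi-homogeneous Hermitian positivstellensatz.

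The key input I would invoke is the multi-graded Quillen--Catlin-D'Angelo theorem: if $R(x,\overline{x})$ is a multi-bihomogeneous real-valued Hermitian polynomial that is strictly positive off the coordinate cross $\{x_k=0\text{ for some }k\}$, then there exist integers $k_1,\ldots,k_m\ge 0$ such that $|x_1|^{2k_1}\cdots|x_m|^{2k_m}\cdot R(x,\overline{x})$ is a sum of squared moduli of holomorphic polynomials in $x$. Applied to $R=\cH$ this yields the required exponents and hence $\cH\in\Omega_\cpx^{k_1\ldots k_m}$. Equivalently, for sufficiently large $k_1,\ldots,k_m$, the Hermitian matrix representing $|x_1|^{2k_1}\cdots|x_m|^{2k_m}\cH(x,\overline{x})$ in the multi-Veronese basis of monomials of multidegree $(k_1+1,\ldots,k_m+1)$ is positive semidefinite, after which Proposition~\ref{prop:HSOScond} applied to the lifted Hermitian tensor delivers the HSOS representation.

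The main obstacle is justifying the multi-graded positivstellensatz cited above. The case $m=1$ is Quillen's original theorem, rediscovered by Catlin and D'Angelo, and requires nontrivial complex-analytic input such as Bergman kernel asymptotics or a Toeplitz/moment-matrix argument. For general $m$ one either invokes an existing multi-homogeneous extension from the literature on Hermitian sum-of-squares, or adapts the original proof to the multi-Veronese embedding of $\cpx^{n_1}\times\cdots\times\cpx^{n_m}$: the uniform lower bound above makes the relevant lifted Toeplitz operator positive once the multi-Veronese degree is high enough, which is exactly the psd-ness of the lifted Hermitian flattening matrix that is needed to apply Proposition~\ref{prop:HSOScond}. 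Getting this multi-parameter extension clean, rather than the single-parameter statement, is the only delicate point in the whole proof.
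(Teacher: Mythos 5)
Your handling of the rightmost containment and the reduction of the leftmost one to the first sentence (that $\cpx$-pd tensors lie in some $\Omega_\cpx^{k_1\ldots k_m}$) is fine. The genuine gap is in the middle: you invoke a ``multi-graded Quillen--Catlin--D'Angelo theorem'' as the key lemma, while conceding in your own text that you do not have a reference for the multi-parameter version and would have to ``adapt'' the classical single-grading proof. But that multi-graded statement is, up to trivial reformulation, exactly the assertion $\cH \in \Omega_\cpx^{k_1\ldots k_m}$ that the theorem asks you to establish; citing it as a prerequisite is circular unless you can point to a precise result in the literature, and you flag this yourself as ``the only delicate point.'' The Bergman-kernel/Toeplitz route you gesture at is an honest line of attack, but you neither carry it out nor explain why the passage to the multi-Veronese embedding (and the accompanying use of Proposition~\ref{prop:HSOScond} on a higher-degree lifted tensor, which is not literally in the scope of that proposition as stated) goes through. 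As written, the argument has a hole exactly where the real content sits.

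The paper closes this hole by a genuinely different and more elementary route that avoids any new Hermitian positivstellensatz. It notes that strict positivity of $\cH(x,\overline{x})$ on the compact product of unit spheres is strict positivity modulo the ideal $J$ generated by $|x_1|^2-1,\ldots,|x_m|^2-1$, which is archimedean in $\cpx[x,\overline{x}]$ because $m-(|x_1|^2+\cdots+|x_m|^2)\in J$. It then applies the already-known Proposition~3.2 of Putinar and Scheiderer~\cite{PutSch14} (a Quillen-property Positivstellensatz for Hermitian sums of squares on compact real algebraic varieties) to write $\cH(x,\overline{x})=\sum_\ell|p_\ell(x)|^2+\sum_j(|x_j|^2-1)\,c_j(x,\overline{x})$. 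This representation is affine and not multi-homogeneous, so the remainder of the proof is a homogenization step: substitute $x_i\mapsto x_i/|x_i|$, clear denominators by $|x_1|^{2k_1}\cdots|x_m|^{2k_m}$, and then show by a multi-degree bookkeeping expansion of the resulting $|q_\ell(x,\overline{x})|^2$ that all cross terms of mismatched multi-degree cancel, leaving a sum of squared moduli of genuine polynomials in $x$. In short, the paper reduces to a known affine Positivstellensatz plus combinatorics, whereas your plan rests on a stand-alone multi-graded Quillen theorem that you neither prove nor cite; the two strategies are not the same, and yours is not yet a proof.
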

\begin{proof}
When $\mH$ is $\cpx$-positive definite, the real-valued
complex conjugate polynomial $\cH(x,\overline{x})$ is positive on the
complex spheres $x_i^*x_i = 1$.
Consider the ideal $J$ generated by conjugate polynomials
$|x_1|^2-1, \ldots, |x_m|^2 - 1$, in the ring $\cpx[x, \overline{x}]$.
The ideal $J$ is archimedean \cite{Lau09}, since
$m - (|x_1|^2 + \cdots + |x_m|^2) \in J$.
Then, by \cite[Proposition~3.2]{PutSch14},
$\cH(x,\overline{x})$ is HSOS modulo $J$, i.e., there exist complex polynomials
$p_\ell  \in \cpx[x]$ and conjugate polynomials $c_j \in \cpx[x, \overline{x}]$
such that
\[
\cH(x,\overline{x}) = {\sum}_{\ell=1}^N | p_\ell(x) |^2  +
{\sum}_{j=1}^m (| x_j |^2 -1 ) c_j(x, \overline{x}).
\]
Note that $\cH(x,\overline{x})$ is homogeneous quadratic conjugate in each $(x_i, \overline{x}_i)$.
There exist powers $k_1, \ldots, k_m$ such that each $k_i$
is not less than the highest degree of $x_i$ of all polynomials $p_l$.
For each $\ell$, let
\[
q_\ell(x,\overline{x}) := |x_1|^{k_1} \cdots |x_m|^{k_m}
p_\ell( x_1 / |x_1|, \ldots,  x_m / |x_m|).
\]
In the above expression of $\cH(x,\overline{x})$,
we substitute each $x_i$ for $x_i/ | x_i |$, then
\[
\baray{rcl}
P(x, \overline{x}) &:= & |x_1|^{2k_1} \cdots |x_m|^{2k_m} \cH(x_1 / |x_1|, \ldots,  x_m / |x_m|,\overline{x_1} / |x_1|, \ldots,  \overline{x_m} / |x_m|)
\\
 &=& |x_1|^{2k_1-2} \cdots |x_m|^{2k_m-2} \cH(x,\overline{x}) =
{\sum}_{\ell=1}^N |q_\ell(x,\overline{x})|^2 .
\earay
\]
%
%
Since $P(x, \overline{x})$ is also homogeneous in each $(x_i, \overline{x}_i)$ with degree $2k_i$,
each $q_l(x,\overline{x})$ must have the same degree $k_i$ in $(x_i, \overline{x}_i)$.
Write each $q_\ell$ in the form as
\[
q_\ell(x,\overline{x}) = \sum_{0 \le s_i \le k_i }
|x_1|^{s_1}\cdots |x_m|^{s_m} \cdot g_l^{s_1,\ldots,s_m}(x)
\]
where $g_l^{s_1,\ldots,s_m}(x)$ is a complex polynomial.
The degree of $x_i$ in each term of $g_l^{s_1,\ldots,s_m}(x)$
must be $k_i-s_i$, so
\begin{eqnarray*}
& &|q_\ell(x,\overline{x})|^2 =
\left|
\sum_{ 0\le s_i \le k_i }
|x_1 |^{s_1} \cdots  | x_m |^{s_m} g_l^{s_1,\ldots,s_m}(x)
\right|^2\\
	&=& \sum_{0\le t_i \le k_i}\sum_{0\le s_i \le k_i}
 | x_1 |^{s_1}\cdots  | x_m |^{s_m} g_l^{s_1,\ldots,s_m}(x)  |x_1 |^{t_1}\cdots | x_m |^{t_m} \overline{g_l^{t_1,\ldots,t_m}(x)} \\
	&=& \sum_{0\le t_i \le k_i}\sum_{0\le s_i \le k_i} |x_1|^{s_1+t_1}\cdots |x_m|^{s_m+t_m} g_l^{s_1,\ldots,s_m}(x) \overline{g_l^{t_1,\ldots,t_m}(x)}.
\end{eqnarray*}
The degrees of $x_i$ and $\overline{x}_i$ of
$g_l^{s_1,\ldots,s_m}(x)\overline{g_l^{t_1,\ldots,t_m}(x)}$
are $k_i-s_i,k_i-t_i$ respectively.
The degrees of $x_i$ and $\overline{x}_i$ in $P(x,\overline{x})$ must match,
so $g_l^{s_1,\ldots,s_m}(x)\overline{g_l^{t_1,\ldots,t_m}(x)}$
has the same degree of $x_i$ and $\overline{x}_i$ if and only $s_i=t_i$. Thus
the terms like $|x_1|^{s_1+t_1}\cdots |x_m|^{s_m+t_m} g_l^{s_1,\ldots,s_m}(x) \overline{g_l^{t_1,\ldots,t_m}(x)}$ with
$(s_1,\ldots,s_m)\neq (t_1,\ldots,t_m) $
will be canceled after the expansion. Therefore
\begin{eqnarray*}
P(x, \overline{x} ) &=& \sum_{\ell=1}^N \sum_{0\le t_i \le k_i}\sum_{0\le s_i \le k_i} |x_1|^{s_1+t_1}\cdots |x_m|^{s_m+t_m}
g_l^{s_1,\ldots,s_m}(x) \overline{g_l^{t_1,\ldots,t_m}(x)} \\
&=& \sum_{\ell=1}^N \sum_{0\le s_i \le k_i} |x_1|^{2s_1}\cdots |x_m|^{2s_m} |g_l^{s_1,\ldots,s_m}(x)|^2 ,
\end{eqnarray*}
which means that $P(x, \overline{x} )$ is HSOS.
\end{proof}

For $\re$-psd Hermitian tensors, we have a similar conclusion.
For each $\mH \in \re^{[n_1,\ldots, n_m]}$, $\mH(x,x) = \mH(x,\overline{x})$
for real $x$. So, $\mH$ is $\re$-psd
if and only if $\mH(x,x)\geq 0$ for all real $x$.
Note that $\mH(x,x)$ is a real multi-quadratic homogeneous polynomial.
The classical Positivstellensatz \cite{LasBk15,Lau09,Put93,Rez00}
for real positive polynomials can be used to characterize $\re$-psd tensors.
For powers $k_i \geq 0$, we can similarly define the cone
(denote by $\Sigma[x]_\re$ the cone of real SOS polynomials
in $\re[x]$, i.e., $\Sigma[x]_\re$ is the cone generated by
squares $q^2$, for $q \in \re[x]$)
\be \label{Omega:re:k1tokm}
\Omega_\re^{k_1 \ldots k_m} = \Big\{
\mH \in \re^{[n_1,\ldots, n_m]}:
(x_1^Tx_1)^{k_1} \cdots (x_m^Tx_m)^{k_m} \cdot \cH(x,x) \in \Sigma[x]_\re
\Big\} .
\ee
Clearly, if $\mH \in \Omega_\re^{k_1 \ldots k_m}$,
then $\mH$ must be $\re$-psd.
Each $\Omega_\re^{k_1 \ldots k_m}$ is a closed convex cone.
We have the following characterization for $\re$-psd tensors.

\begin{theorem} \label{pd:HSOS:real}
If $\mH \in \re^{[n_1,\ldots, n_m]}$
is $\re$-positive definite, then there exist powers $k_1, \ldots, k_m \geq 0$
such that $\cH \in \Omega_\re^{k_1 \ldots k_m}$.
Therefore, we have
\be
\mbox{int} \Big( \mathscr{P}_\re^{[n_1,\ldots, n_m]}  \Big) \, \subseteq \,
  \bigcup_{  k_1, \ldots, k_m \geq 0 } \Omega_\re^{k_1 \ldots k_m}
\, \subseteq \, \mathscr{P}_\re^{[n_1,\ldots, n_m]} .
\ee
\end{theorem}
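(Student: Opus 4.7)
The plan is to mirror the proof of Theorem~\ref{pd:HSOS:cpx}, replacing the complex Positivstellensatz used there with Putinar's real archimedean Positivstellensatz, and inserting a $\mathbb{Z}/2$-parity argument to take the place of the complex proof's $(x_i,\overline{x}_i)$ degree matching. Since $\cH$ is $\re$-positive definite, the polynomial $\cH(x,x)$ is strictly positive on the compact real variety $K = \{x : x_j^T x_j = 1,\, j=1,\ldots,m\}$. The quadratic module generated by $\{\pm(x_j^T x_j - 1)\}_{j=1}^m$ is archimedean, since $m - \sum_j x_j^T x_j$ lies in the corresponding ideal, so Putinar's theorem yields
$$\cH(x,x) \,=\, \sigma_0(x) + \sum_{j=1}^m (x_j^T x_j - 1)\, c_j(x), \qquad \sigma_0 \in \Sigma[x]_\re,\; c_j \in \re[x].$$

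Next I would substitute $x_j \mapsto x_j/\|x_j\|$, which annihilates the second sum, and use that $\cH(x,x)$ is bi-homogeneous of multi-degree $(2,\ldots,2)$ to deduce the rational identity $\cH(x,x) = \prod_j (x_j^T x_j)\, \sigma_0(x_1/\|x_1\|,\ldots,x_m/\|x_m\|)$. Writing $\sigma_0 = \sum_\ell p_\ell^2$ and decomposing each $p_\ell = \sum_d p_\ell^d$ into bi-homogeneous pieces of multi-degree $d = (d_1,\ldots,d_m)$, I would choose each $k_j$ to be an even integer at least as large as every $d_j$ appearing in any $p_\ell$, and then multiply both sides by $\prod_j (x_j^T x_j)^{k_j-1}$ to obtain
$$\prod_j (x_j^T x_j)^{k_j-1}\, \cH(x,x) \,=\, \sum_\ell \sum_{d,d'} p_\ell^d(x)\, p_\ell^{d'}(x) \prod_j \|x_j\|^{2k_j - d_j - d'_j}.$$

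The real-case twist then enters: the left-hand side is invariant under each sign flip $x_j \mapsto -x_j$, while a generic term on the right scales by $(-1)^{d_j+d'_j}$. Averaging over the $2^m$ sign flips eliminates every $(d,d')$ with some $d_j+d'_j$ odd and leaves
$$\prod_j (x_j^T x_j)^{k_j-1}\, \cH(x,x) \,=\, \sum_{\ell}\sum_{\epsilon \in \{0,1\}^m}\! \Bigl(\,\sum_{d \equiv \epsilon \pmod 2} p_\ell^d(x) \prod_j \|x_j\|^{k_j - d_j}\,\Bigr)^{\!\!2}.$$
Pulling out a factor of $\prod_j \|x_j\|^{\epsilon_j}$ from each inner sum leaves a genuine polynomial $Q_\ell^\epsilon \in \re[x]$, since the residual exponents $k_j - d_j - \epsilon_j$ are all even by the choice of $k_j$ and the parity of $d_j$; squaring then gives $\prod_j (x_j^T x_j)^{\epsilon_j}\, Q_\ell^\epsilon(x)^2$, which lies in $\Sigma[x]_\re$ because each $x_j^T x_j = \sum_i x_{j,i}^2$ is SOS. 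Summing over $(\ell,\epsilon)$ yields $\cH \in \Omega_\re^{k_1-1,\ldots,k_m-1}$, establishing the first containment of the theorem; the second is immediate from the definition of $\Omega_\re^{k_1 \ldots k_m}$.

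The main obstacle is pinning down the sign-flip averaging step. The individual cross-parity contributions $p_\ell^d\, p_\ell^{d'}\prod_j \|x_j\|^{\mathrm{odd}}$ are not polynomials, so their deletion is not a termwise matching as in the complex proof but rather the observation that their collective sum constitutes the sign-non-invariant component of a sign-invariant polynomial identity and hence must vanish identically as a function. Once this projection is justified, the bi-homogeneous bookkeeping and the extraction of the $\|x_j\|^{\epsilon_j}$ factors proceed routinely.
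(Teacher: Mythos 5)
Your proposal is correct and follows essentially the approach the paper intends: the paper omits the real-case proof, saying only that it mirrors Theorem~\ref{pd:HSOS:cpx}, and you have correctly worked out that mirror, with the $(\mathbb{Z}/2)^m$ sign-flip averaging playing the role of the $x_i$-versus-$\overline{x}_i$ degree matching used in the complex proof. One trivial loose end: when choosing the even exponents $k_j$, also require $k_j \geq 2$ (e.g.\ take $k_j = \max\{2, \max_d d_j\}$ rounded up to an even integer) so that the final exponents $k_j - 1$ are nonnegative and $\cH \in \Omega_\re^{k_1-1,\ldots,k_m-1}$ is well-posed.
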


The proof for Theorem~\ref{pd:HSOS:real} is the same as the one for
Theorem~\ref{pd:HSOS:cpx}. In fact, the proof is easier
because it deals with real polynomials instead of conjugate polynomials.
Each product $(x_1^Tx_1)^{k_1} \cdots (x_m^Tx_m)^{k_m} \cdot \cH(x,x)$
is a real polynomial in $x$. The conclusion can be implied
by classical results about real positive polynomials over compact semialgebraic sets
\cite{LasBk15,Lau09,Put93,Rez00}.
For cleanness of the paper, we omit the proof.

\section{Separable Hermitian Tensors}
\label{sc:sepent}

A basic topic in quantum physics is tensor entanglement.
It requires to decide whether or not a given Hermitian tensor
can be written as a sum of rank-$1$ Hermitian tensors with positive coefficients.
This leads to the concept of {\it separable} tensors.

\begin{definition} \rm
\cite{Ni19} A Hermitian tensor $\cH \in \bC^{[n_1, \ldots, n_m]}$
is called {\it separable} if
\be \label{Eq:separabletensor}
\cH = [u_1^1, \ldots, u_1^m]_{\otimes h} + \cdots +
 [u_r^1, \ldots, u_r^m]_{\otimes h}
\ee
for some vectors $u_i^j\in \mathbb{C}^{n_j}$. When it exists,
\reff{Eq:separabletensor} is called a
{\it positive $\cpx$-Hermitian decomposition}
and $\cH$ is called {\it $\cpx$-separable}.
Moreover, if each $u_i^j$ in \reff{Eq:separabletensor}
is real, then $\cH$ is called {\it $\re$-separable}
and \reff{Eq:separabletensor} is called a
{\it positive $\re$-Hermitian decomposition}.
\end{definition}

Let $\F = \cpx$ or $\re$. The set of $\F$-separable tensors in
$\F^{[n_1, \ldots, n_m]}$ is denoted as $\mathscr{S}_{\F}^{[n_1,\ldots,n_m]}$.
%
%
The decomposition~\reff{Eq:separabletensor}
is equivalent to that
\[
\cH(x,\overline{x}) = {\sum}_{i=1}^r | (u_i^1)^*x_1|^2 \cdots | (u_i^m)^*x_m|^2 .
\]
All $\F$-separable tensors must be HSOS.
To be $\re$-separable, a tensor must be $\re$-Hermitian decomposable.
The following is the relationship between
$\cpx$-separability and $\re$-separability.

\begin{lemma} \label{lemma: R-decom sep}
For $\mH \in \re_D^{[n_1, \ldots, n_m]}$,
$\mH$ is $\re$-separable if and only if it is $\bC$-separable.
\end{lemma}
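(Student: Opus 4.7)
The ``only if'' direction is immediate: a positive $\re$-Hermitian decomposition of $\mH$ is already a positive $\bC$-Hermitian decomposition. For the ``if'' direction, the plan is to construct a linear averaging operator $\operatorname{Sym}: \re^{[n_1,\ldots,n_m]} \to \re^{[n_1,\ldots,n_m]}$ with two properties: (i) $\operatorname{Sym}$ restricts to the identity on $\re_D^{[n_1,\ldots,n_m]}$; and (ii) $\operatorname{Sym}$ sends every rank-$1$ Hermitian tensor $[u_1,\ldots,u_m]_\oh$ with complex vectors $u_k$ to an $\re$-separable tensor. Applying $\operatorname{Sym}$ termwise to a given $\bC$-separable decomposition of $\mH$ will then produce an $\re$-separable decomposition.

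Define
\[
\operatorname{Sym}(\mA)_{i_1\cdots i_m j_1 \cdots j_m} \,:=\,
\frac{1}{2^m} \sum_{\epsilon \in \{0,1\}^m} \mA_{p_1(\epsilon)\cdots p_m(\epsilon)\, q_1(\epsilon)\cdots q_m(\epsilon)},
\]
where $(p_s(\epsilon), q_s(\epsilon)) = (i_s, j_s)$ when $\epsilon_s = 0$ and $(p_s(\epsilon), q_s(\epsilon)) = (j_s, i_s)$ when $\epsilon_s = 1$. By Theorem~\ref{Th:real Herm}, $\re_D^{[n_1,\ldots,n_m]}$ is exactly the subspace of real Hermitian tensors fixed by each per-mode swap, so $\operatorname{Sym}(\mH) = \mH$ whenever $\mH \in \re_D^{[n_1,\ldots,n_m]}$. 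This takes care of property (i).

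For property (ii), write $u_k = a_k + \sqrt{-1}\,b_k$ with real $a_k, b_k$. The entry of $[u_1,\ldots,u_m]_\oh$ at $(i_1,\ldots,j_m)$ factors across modes as $\prod_k (u_k)_{i_k}\overline{(u_k)_{j_k}}$; since the mode-$k$ swap conjugates the $k$-th factor and swaps in different modes are independent, the expectation factors:
\[
\operatorname{Sym}([u_1,\ldots,u_m]_\oh)_{i_1\cdots i_m j_1 \cdots j_m}
= \prod_{k=1}^m \operatorname{Re}\bigl((u_k)_{i_k}\overline{(u_k)_{j_k}}\bigr)
= \prod_{k=1}^m (a_k a_k^T + b_k b_k^T)_{i_k j_k}.
\]
Expanding this product mode by mode yields the tensor identity
\[
\operatorname{Sym}([u_1,\ldots,u_m]_\oh) \,=\, \sum_{\epsilon \in \{0,1\}^m} [v_1^{\epsilon_1},\ldots,v_m^{\epsilon_m}]_\oh,
\qquad v_k^0 := a_k,\ v_k^1 := b_k,
\]
a sum of $2^m$ real rank-$1$ Hermitian tensors with unit positive coefficients, hence $\re$-separable. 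Applying this identity to each term in a $\bC$-separable decomposition $\mH = \sum_i [u_i^1,\ldots,u_i^m]_\oh$ and invoking $\operatorname{Sym}(\mH) = \mH$ concludes the argument.

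The only step requiring real thought is picking the correct averaging operator; Theorem~\ref{Th:real Herm} makes per-mode $(i_s, j_s)$-swap the unique natural symmetry to average over, and the multiplicative structure of $[u_1,\ldots,u_m]_\oh$ is what lets the averaging collapse mode by mode into the real positive semidefinite matrices $a_k a_k^T + b_k b_k^T$. Everything beyond that is bookkeeping.
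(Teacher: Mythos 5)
Your proof is correct and the core algebraic content coincides with the paper's: both reduce to writing $u_k = a_k + \sqrt{-1}\,b_k$ and using the mode-wise identity $\mathrm{Re}\bigl((u_k)_{i_k}\overline{(u_k)_{j_k}}\bigr) = (a_k a_k^T + b_k b_k^T)_{i_k j_k}$, expanded as a product over $k$. The packaging, however, is genuinely different. The paper works through the conjugate-polynomial side: it restricts to real vector arguments $x$, expands $\prod_j |(u^j)^* x_j|^2 = \prod_j \bigl(|(\mathrm{Re}\,u^j)^T x_j|^2 + |(\mathrm{Im}\,u^j)^T x_j|^2\bigr)$, and then invokes Lemma~\ref{rH:poly=decomp} to upgrade the resulting polynomial identity to a tensor identity, using that $\cH\in\re_D^{[n_1,\ldots,n_m]}$. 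You instead build the averaging operator $\operatorname{Sym}$ — the projection onto the per-mode-swap-invariant subspace, which by Theorem~\ref{Th:real Herm} is exactly $\re_D^{[n_1,\ldots,n_m]}$ — and compute its effect directly on each rank-$1$ Hermitian tensor. This bypasses the intermediate Lemma~\ref{rH:poly=decomp} entirely and is arguably more conceptual, as it makes the ``group averaging onto the invariant subspace'' structure explicit. One small repair: you declare $\operatorname{Sym}$ on the domain $\re^{[n_1,\ldots,n_m]}$ but then apply it to $[u_1,\ldots,u_m]_\oh$ with complex $u_k$, which is not in that domain. The fix is to define $\operatorname{Sym}$ by the same formula on all of $\bC^{[n_1,\ldots,n_m]}$; a short check using the Hermitian symmetry $\mA_{IJ}=\overline{\mA_{JI}}$ shows its image lands in $\re^{[n_1,\ldots,n_m]}$ (in fact in $\re_D^{[n_1,\ldots,n_m]}$), so the termwise application and the equality $\operatorname{Sym}(\mH)=\mH$ are both legitimate. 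With that adjustment the argument is complete.
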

\begin{proof}
The ``only if" direction is obvious.
We prove the ``if" direction.
Assume $\cH$ is $\bC$-separable, then \reff{Eq:separabletensor}
holds for some complex vectors $u_i^j$. Let
$s^j_i :=\mbox{Re}(u_i^j)$ and $t^j_i :=\mbox{Im}(u_i^j)$.
For all {\em real} vector variables $x_i \in \re^{n_i}$,
the inner product $\langle [u_i^1,\ldots,u_i^m]_\oh, [x_1,\ldots,x_m]_\oh  \rangle
=\prod_{j=1}^m |(u_i^j)^* x^j|^2 $,
which can be expanded as
\[
\prod_{j=1}^m \Big(|(s_i^j)^Tx_j|^2+|(t_i^j)^Tx_j|^2 \Big)
=\sum_{z_i^j\in \{s_i^j,t_i^j\}}
 \langle [z_i^1,\ldots,z_i^m]_\oh, [x_1,\ldots,x_m]_\oh  \rangle .
\]
The equation \reff{Eq:separabletensor} implies that, for all real vectors $x_i$,
\[
\langle \cH, [x_1,\ldots,x_m]_\oh \rangle =
\sum_{i=1}^r\sum_{z_i^j\in \{s_i^j,t_i^j\}}
\langle [z_i^1,\ldots,z_i^m]_\oh, [x_1,\ldots,x_m]_\oh \rangle.
\]
Since $\cH$ is $\re$-separable, by Lemma~\ref{rH:poly=decomp},
$
\cH = \sum_{i=1}^r\sum_{z_i^j\in \{s_i^j,t_i^j\}} [z_i^1,\ldots,z_i^m]_\oh.
$
Hence, $\cH$ is also $\re$-separable.
\end{proof}

\subsection{The dual relationship}

The complex separable tensor cone $\mathscr{S}_{\cpx}^{[n_1,\ldots,n_m]}$
is dual to $\mathscr{P}_{\cpx}^{[n_1,\ldots,n_m]}$,
as noted in~\cite{Ni19}. The duality also holds for the real case.
Let $\F = \cpx$ or $\re$. By the definition (see \cite{BrksCOT}),
the dual cone of $\mathscr{S}_{\F}^{[n_1,\ldots,n_m]}$ is the set
\[
\Big( \mathscr{S}_{\F}^{[n_1,\ldots,n_m]} \Big)^\star :=
\Big\{ X \in \F^{[n_1,\ldots,n_m]}:
\langle X, Y \rangle \geq 0 \,\forall \, Y \in
\mathscr{S}_{\F}^{[n_1,\ldots,n_m]} \Big\}.
\]
Recall that a closed convex cone is proper if it is solid
(has nonempty interior) and pointed (does not contain any line through the origin).
The complex cone $\mathscr{S}_{\cpx}^{[n_1,\ldots,n_m]}$
is proper \cite{Ni19}, but
$\mathscr{S}_{\re}^{[n_1,\ldots,n_m]}$ is not.
%
%

\begin{theorem} \label{thm:sep:dual}
For $\F = \re,\cpx$,  the cone $\mathscr{S}_{\F}^{[n_1,\ldots,n_m]}$
is dual to $\mathscr{P}_{\F}^{[n_1,\ldots,n_m]}$, i.e.,
\be \label{psd:dual:sep}
\Big( \mathscr{S}_{\F}^{[n_1,\ldots,n_m]} \Big)^\star =
\mathscr{P}_{\F}^{[n_1,\ldots,n_m]}, \quad
\Big( \mathscr{P}_{\F}^{[n_1,\ldots,n_m]} \Big)^\star =
\mathscr{S}_{\F}^{[n_1,\ldots,n_m]}.
\ee
Moreover, the complex cone $\mathscr{S}_{\cpx}^{[n_1,\ldots,n_m]}$ is proper,
while the real one $\mathscr{S}_{\re}^{[n_1,\ldots,n_m]}$ is not proper.
In fact, $\mathscr{S}_{\re}^{[n_1,\ldots,n_m]}$ is pointed but not solid.
\end{theorem}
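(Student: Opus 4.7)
The plan is to derive the duality identities first and then read off the properness statements from the standard polarity correspondence between solid and pointed closed convex cones. The key computational ingredient is the rank-one pairing identity
\[
\langle X, [u^1,\ldots,u^m]_\oh \rangle \, = \, X(u, \overline{u}),
\]
valid for every Hermitian $X \in \F^{[n_1,\ldots,n_m]}$ and every tuple $u = (u^1,\ldots, u^m)$ with $u^j \in \F^{n_j}$; this is immediate from \reff{Eq:tensorinnerproduct} and \reff{Eq:vectorproduct}. Since every element of $\mathscr{S}_{\F}^{[n_1,\ldots,n_m]}$ is a nonnegative sum of such rank-one Hermitian tensors, $X$ pairs nonnegatively with all of $\mathscr{S}_{\F}^{[n_1,\ldots,n_m]}$ if and only if $X(u,\overline{u}) \ge 0$ for all $u^j\in \F^{n_j}$, i.e., $X \in \mathscr{P}_{\F}^{[n_1,\ldots,n_m]}$. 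This gives the first identity in \reff{psd:dual:sep}.

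To obtain the reverse identity $(\mathscr{P}_{\F}^{[n_1,\ldots,n_m]})^\star = \mathscr{S}_{\F}^{[n_1,\ldots,n_m]}$ I would invoke the bipolar theorem, which requires $\mathscr{S}_{\F}^{[n_1,\ldots,n_m]}$ to be a closed convex cone. Convexity is immediate from the definition. For closedness, Caratheodory's theorem lets me write every separable tensor as a positive Hermitian decomposition with at most $\dim_\re \F^{[n_1,\ldots,n_m]}$ summands using unit-length vectors, and the corresponding nonnegative weights $t_i$ are bounded because $\mathrm{tr}\,\mathfrak{m}(\mH) = \sum_i t_i$ when each $[u_i^1,\ldots,u_i^m]_\oh$ is built from unit vectors (each such rank-one Hermitian flattening has unit trace). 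A convergent sequence in $\mathscr{S}_{\F}^{[n_1,\ldots,n_m]}$ therefore has bounded weights and decomposing vectors in a product of compact spheres, so a subsequential limit realizes the limiting tensor as separable. The bipolar theorem then yields the second identity in \reff{psd:dual:sep}.

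Finally, the properness claims follow formally: a closed convex cone $K$ in a finite dimensional real inner product space is solid (resp., pointed) if and only if its dual $K^\star$ is pointed (resp., solid). By Proposition~\ref{pro:psdHT:proper}, $\mathscr{P}_{\cpx}^{[n_1,\ldots,n_m]}$ is both solid and pointed, so $\mathscr{S}_{\cpx}^{[n_1,\ldots,n_m]}$ is also both pointed and solid, i.e., proper. Likewise $\mathscr{P}_{\re}^{[n_1,\ldots,n_m]}$ is solid but not pointed, so $\mathscr{S}_{\re}^{[n_1,\ldots,n_m]}$ is pointed but not solid. The non-solidity can also be seen directly from $\mathscr{S}_{\re}^{[n_1,\ldots,n_m]} \subseteq \re_D^{[n_1,\ldots,n_m]}$, which is a proper subspace of $\re^{[n_1,\ldots,n_m]}$ by \reff{dimRD<dimR}.

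The main obstacle is the closedness of $\mathscr{S}_{\F}^{[n_1,\ldots,n_m]}$; once that compactness-plus-Caratheodory argument is carried out, everything else is a formal consequence of cone polarity together with the already-established Proposition~\ref{pro:psdHT:proper}.
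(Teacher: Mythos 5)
Your proof is correct and follows essentially the same route as the paper's: identify $(\mathscr{S}_\F)^\star = \mathscr{P}_\F$ via the rank-one pairing, invoke the bipolar theorem to get the reverse identity, and transfer solidity/pointedness across the duality using Proposition~\ref{pro:psdHT:proper}. The only difference is that you fill in the closedness of $\mathscr{S}_\F$ explicitly via Carath\'eodory plus the trace bound, whereas the paper simply observes that $\mathscr{S}_\F$ is the conic hull of a compact set of unit-trace tensors and cites a textbook; your elaboration is a welcome but not substantively different detail.
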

\begin{proof}
Observe that $\mathscr{S}_{\F}^{[n_1,\ldots,n_m]}$
equals the conic hull of the compact set
\be \label{set:U}
U := \Big( [u_1, \ldots, u_m]_{\otimes h}:
u_i \in \F^{n_i}, \| u_i \| = 1  \Big),
\ee
so it is a closed convex cone \cite{BrksCOT}.
A tensor $X \in {\F}^{[n_1,\ldots,n_m]}$ belongs to the dual cone of
$\mathscr{S}_{\F}^{[n_1,\ldots,n_m]}$
if and only if
$
\langle X, [u_1, \ldots, u_m]_{\otimes h} \rangle \geq 0
$
for all $u_i \in \F^{n_i}$,
which is equivalent to that $X$ is $\F$-psd.
Therefore, the dual cone of $\mathscr{S}_{\F}^{[n_1,\ldots,n_m]}$
is $\mathscr{P}_{\F}^{[n_1,\ldots,n_m]}$.
Since $\mathscr{S}_{\F}^{[n_1,\ldots,n_m]}$
and $\mathscr{P}_{\F}^{[n_1,\ldots,n_m]}$ are both closed convex cones,
the dual cone of $\mathscr{P}_{\F}^{[n_1,\ldots,n_m]}$
is also equal to $\mathscr{S}_{\F}^{[n_1,\ldots,n_m]}$,
by the bi-duality theorem \cite{BrksCOT}.
Hence, the dual relationship \reff{psd:dual:sep} holds.
By Proposition~\ref{pro:psdHT:proper}, the cone
$\mathscr{P}_{\cpx}^{[n_1,\ldots,n_m]}$ is proper,
while $\mathscr{P}_{\re}^{[n_1,\ldots,n_m]}$ is solid but not pointed.
By the duality, $\mathscr{S}_{\cpx}^{[n_1,\ldots,n_m]}$ is also proper,
while $\mathscr{S}_{\re}^{[n_1,\ldots,n_m]}$
is pointed but not solid \cite{BTN01}.
\end{proof}

Theorem~\ref{thm:sep:dual} tells that a Hermitian tensor is $\F$-separable
if and only if it belongs to the dual cone of
$\mathscr{P}_{\F}^{[n_1,\ldots,n_m]}$. 
Therefore, for $\A \in \F^{[n_1,\ldots,n_m]}$,
if there exists $\B \in \F^{[n_1,\ldots,n_m]}$ such that
$\B(x,\overline{x})  \in \Sigma[x, \overline{x}]$ and $\langle \A, \B \rangle < 0$,
then $\A$ is not $\F$-separable. For instance,
consider the Hankel tensor $\mA \in \bC^{[2,2]}$ such that
$\mA_{ijkl} =  i+j+k+l$
for all $i,j,k,l$. Let $\mB$ be the Hermitian tensor such that
\[
  \langle \B, [x_1,x_2]_{\otimes h} \rangle  
   = |x_{11}x_{21}-\frac{5}{6}x_{11}x_{22} |^2  .
\]
Since $\B(x) \in \Sigma[x]$ and
$
\langle \A, \B \rangle = -\frac{1}{6}<0,
$
$\A$ is not $\F$-separable for $\F= \cpx,\re$.

\subsection{Reformulations for separability}

An important computational task is to determine whether or not
a Hermitian tensor is separable.
If it is, we need a positive Hermitian decomposition.
This is an interesting future work.

Let $\F = \cpx,\re$. In the proof of Theorem~\ref{thm:sep:dual},
we have seen that the $\F$-separable Hermitian tensor cone
$\mathscr{S}_{\F}^{[n_1,\ldots,n_m]}$ equals the conic hull
of the compact set $U$, that is, ($\mbox{cone}$ denotes the conic hull)
\be
\mathscr{S}_{\F}^{[n_1,\ldots,n_m]} \, = \,
\mbox{cone}\Big( [u_1, \ldots, u_r]_{\otimes h}:
u_i \in \F^{n_i}, \| u_i \| = 1 \Big) .
\ee
Equivalently, we have $\mA \in \mathscr{S}_{\F}^{[n_1,\ldots,n_m]}$
if and only if there exist positive scalars $\lmd_i >0$
and unit length vectors $u_i^j \in \F^{n_j}$ such that
\be \label{meas:mA=lmd_i:otimes:uij}
\mA = {\sum}_{i=1}^r \lmd_i [u_i^1, \ldots, u_i^m]_{\otimes h}.
\ee
If we let
$\mu := {\sum}_{i=1}^r \lmd_i \dt_{(u_i^1, \ldots, u_i^m)}$
be the weighted sum of Dirac measures,
then \reff{meas:mA=lmd_i:otimes:uij} is equivalent to
\be \label{integral:mA=[x]dmu}
\mA = \int  [x_1, \ldots, x_m]_{\otimes h} \mathtt{d} \mu.
\ee
The support $\supp{\mu}$ of the measure $\mu$ is contained in the multi-sphere
\[
\mathbb{S}_\F^{n_1,\ldots,n_m}  \, := \,
\{(x_1, \ldots, x_m) \in \F^{n_1} \times \cdots \F^{n_m}:
\| x_1 \| = \cdots \| x_m \| = 1 \}.
\]
Interestingly, if there is a Borel measure $\mu$ supported in
$\mathbb{S}_\F^{n_1,\ldots,n_m}$, then there must exist
$\lmd_i > 0$ and unit length vectors
$u_i^j$ satisfying \reff{meas:mA=lmd_i:otimes:uij}.
This can be implied by the proof of Theorem~5.9 of \cite{Lau09}.
Therefore, we have the following theorem.

\begin{theorem} \label{thm:TMP:sepa}
For $\F = \cpx$ or $\re$, a tensor
$\mA \in \F^{[n_1,\ldots,n_m]}$ is $\F$-separable if and only if
there exists a Borel measure $\mu$ such that \reff{integral:mA=[x]dmu}
holds and $\supp{\mu} \subseteq \mathbb{S}_\F^{n_1,\ldots,n_m}$.
\end{theorem}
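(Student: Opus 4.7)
The proof plan is to handle the two directions separately; the forward direction is essentially a repackaging of the definition of separability, while the reverse direction reduces to an application of a classical atomic-representation theorem for positive measures on compact sets (of Richter--Tchakaloff type).

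For the ``only if'' direction, assume $\mA$ is $\F$-separable. Then there exist vectors $u_i^j \in \F^{n_j}$ such that $\mA = \sum_{i=1}^r [u_i^1,\ldots,u_i^m]_{\otimes h}$. Using the homogeneity identity $[c_1 v_1,\ldots,c_m v_m]_{\otimes h} = |c_1|^2 \cdots |c_m|^2 [v_1,\ldots,v_m]_{\otimes h}$, we may normalize each $u_i^j$ to unit length, absorbing the positive scalars into coefficients $\lmd_i > 0$. Then $\mu := \sum_{i=1}^r \lmd_i \dt_{(u_i^1,\ldots,u_i^m)}$ is a Borel measure supported in $\mathbb{S}_\F^{n_1,\ldots,n_m}$ satisfying \reff{integral:mA=[x]dmu}.

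For the ``if'' direction, suppose a Borel measure $\mu$ with $\supp{\mu}\subseteq \mathbb{S}_\F^{n_1,\ldots,n_m}$ satisfies the integral identity. The key observation is that the ambient space $\F^{[n_1,\ldots,n_m]}$ is finite-dimensional (as a real vector space), and the tensor-valued map $\Phi(x) := [x_1,\ldots,x_m]_{\otimes h}$ is continuous on the compact multi-sphere. The integral $\int \Phi \, \mathtt{d}\mu = \mA$ amounts to a finite collection of real integrals $\int \phi_I \, \mathtt{d}\mu = c_I$, one for each coordinate of $\mA$ in a real basis of $\F^{[n_1,\ldots,n_m]}$. By the Richter--Tchakaloff cubature theorem (as invoked in the proof of Theorem~5.9 of~\cite{Lau09}), any such truncated moment sequence achievable by a Borel measure on a compact set is also achievable by a finitely supported atomic positive measure whose atoms lie in $\supp{\mu}$. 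Applying this to the finite family $\{\phi_I\}$ on $\mathbb{S}_\F^{n_1,\ldots,n_m}$ yields an atomic measure $\mu' = \sum_{i=1}^r \lmd_i \dt_{(u_i^1,\ldots,u_i^m)}$ with $\lmd_i > 0$, $\|u_i^j\|=1$, and $\int \Phi\,\mathtt{d}\mu' = \mA$. Expanding the integral, this gives exactly \reff{meas:mA=lmd_i:otimes:uij}, so $\mA$ is $\F$-separable.

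The main conceptual point, and essentially the only nontrivial step, is the passage from a general Borel representing measure to a finitely atomic one. This is why the compactness of $\mathbb{S}_\F^{n_1,\ldots,n_m}$ matters: it is exactly the hypothesis that allows Richter--Tchakaloff to apply, and it is why we cannot simply let $x_i$ range over all of $\F^{n_i}$ in \reff{integral:mA=[x]dmu}. Given that the required result is already available in~\cite{Lau09} in essentially the form we need, the proof reduces to invoking it after verifying that our setup (compact base space, finite family of real-valued continuous coordinate functions) matches the hypothesis. For cleanness we would simply cite Theorem~5.9 of \cite{Lau09} rather than reproduce the cubature argument.
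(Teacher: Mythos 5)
Your proof is correct and matches the paper's argument: the forward direction normalizes the decomposing vectors and takes the corresponding weighted sum of Dirac measures, and the reverse direction passes from a general Borel representing measure on the compact multi-sphere to a finitely atomic one via the Richter--Tchakaloff cubature result, citing the proof of Theorem~5.9 in \cite{Lau09} exactly as the paper does.
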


The task of checking existence of $\mu$ in
Theorem~\ref{thm:TMP:sepa} is a truncated moment problem.
We refer to \cite{LasBk15,Lau09,NieZha16,linmomopt,ATKMP}
for related work. Interestingly, separable Hermitian tensors
can also be characterized by the Hermitian flattening map $\mathfrak{m}$.
As in \reff{H=qq*:rk1}, the decomposition~\reff{meas:mA=lmd_i:otimes:uij}
is equivalent to that
\be \label{krondc:m(A)}
\mathfrak{m}(\mA) \,= \, {\sum}_{i=1}^r \lmd_i
\big(u_i^1 (u_i^1)^*\big) \boxtimes \cdots \boxtimes \big(u_i^m (u_i^{m})^*\big).
\ee
The Theorem~\ref{thm:TMP:sepa} immediately implies the following.

\begin{theorem} \label{thm:HerFl:sepa}
For $\F = \cpx$ or $\re$,
a tensor $\mA \in \F^{[n_1,\ldots,n_m]}$
is $\F$-separable if and only if
there exist Hermitian psd matrices $0 \preceq B_{ij} \in \F^{n_j \times n_j}$,
for $i=1,\ldots, s$ and $j=1,\ldots, m$, such that
\be  \label{sepa:A=sum:Bij:ot}
\mathfrak{m}(\mA) \, = \, {\sum}_{i=1}^s
B_{i1} \boxtimes \cdots \boxtimes B_{im}.
\ee
\end{theorem}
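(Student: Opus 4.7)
The plan is to prove both directions directly using the Hermitian flattening identity \reff{krondc:m(A)} together with the spectral decomposition of positive semidefinite matrices. The statement of Theorem~\ref{thm:TMP:sepa} (the measure-theoretic characterization) could be invoked, but I believe the cleanest route is to work directly from the identity \reff{krondc:m(A)} and the fact that every psd matrix is a sum of rank-one psd matrices over the same field.

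For the forward direction, assume $\mA$ is $\F$-separable. Then there exist $\lmd_i > 0$ and unit vectors $u_i^j \in \F^{n_j}$ with
\[
\mA = {\sum}_{i=1}^r \lmd_i [u_i^1,\ldots, u_i^m]_\oh.
\]
Applying the flattening map $\mathfrak{m}$ and using \reff{krondc:m(A)} gives
\[
\mathfrak{m}(\mA) = {\sum}_{i=1}^r \lmd_i \bigl(u_i^1 (u_i^1)^*\bigr) \boxtimes \cdots \boxtimes \bigl(u_i^m (u_i^m)^*\bigr).
\]
Setting $B_{i1} := \lmd_i u_i^1 (u_i^1)^*$ and $B_{ij} := u_i^j (u_i^j)^*$ for $j \ge 2$, each $B_{ij}$ is psd and lies in $\F^{n_j \times n_j}$, so \reff{sepa:A=sum:Bij:ot} holds with $s = r$.

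For the reverse direction, suppose \reff{sepa:A=sum:Bij:ot} holds with psd matrices $B_{ij} \in \F^{n_j \times n_j}$. For each pair $(i,j)$, take a spectral decomposition
\[
B_{ij} = {\sum}_{k=1}^{r_{ij}} v_{ijk} v_{ijk}^*,
\]
where $v_{ijk} \in \F^{n_j}$; this is possible over $\F = \cpx$ by Hermitian diagonalization and over $\F = \re$ because a real symmetric psd matrix is a sum of $vv^T$ with $v \in \re^{n_j}$. Distributing the Kronecker product over these sums, we obtain
\[
B_{i1} \boxtimes \cdots \boxtimes B_{im} = \sum_{k_1,\ldots,k_m} \bigl(v_{i1k_1}v_{i1k_1}^*\bigr)\boxtimes \cdots \boxtimes \bigl(v_{imk_m}v_{imk_m}^*\bigr).
\]
By \reff{krondc:m(A)}, each summand equals $\mathfrak{m}\bigl([v_{i1k_1},\ldots,v_{imk_m}]_\oh\bigr)$. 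Summing over $i$ and using that $\mathfrak{m}$ is a bijection, we conclude
\[
\mA = \sum_{i=1}^s \sum_{k_1,\ldots,k_m} [v_{i1k_1},\ldots,v_{imk_m}]_\oh,
\]
with all $v_{ijk_j} \in \F^{n_j}$. After normalizing each $v_{ijk_j}$ and absorbing the norms into a positive coefficient $\lmd$, this is precisely a positive $\F$-Hermitian decomposition, so $\mA$ is $\F$-separable.

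The argument is essentially bookkeeping on top of \reff{krondc:m(A)}, so there is no substantial obstacle; the only point requiring a moment's care is the real case, where one must invoke that a real symmetric psd matrix admits a rank-one psd decomposition with \emph{real} vectors (via its real spectral decomposition), which is exactly what guarantees that the resulting separating vectors stay in $\re^{n_j}$ and the decomposition stays in $\mathscr{S}_\re^{[n_1,\ldots,n_m]}$ rather than only in $\mathscr{S}_\cpx^{[n_1,\ldots,n_m]}$.
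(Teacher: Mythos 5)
Your proof is correct and takes essentially the same approach as the paper: it hinges on the flattening identity \reff{krondc:m(A)} together with the decomposition of psd matrices into rank-one psd summands over the same field. The paper's ``proof'' is just the one-line claim that Theorem~\ref{thm:TMP:sepa} ``immediately implies'' the result, so your version is really the careful fleshing-out of what the authors leave implicit -- in particular you rightly identify that it is the spectral-decomposition step (not the measure-theoretic characterization) that does the actual work in the reverse direction, and you correctly flag that in the real case one needs the real spectral decomposition of real symmetric psd matrices so that the resulting decomposing vectors remain in $\re^{n_j}$.
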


The smallest integer $s$ in \reff{sepa:A=sum:Bij:ot}
is called the {\it $\F$-psd rank} for the tensor $\mA$.
How to determine $\F$-psd ranks is mostly an open question.

\begin{example}
Consider the tensor $\mA \in \bC^{[2,2]}$ with the Hermitian flattening
\[
\mathfrak{m}(\mA) =
\left(\baray{rrrr}
  5   &  -4   &   1   &  -5  \\
 -4   &  21   &  -5   &   7  \\
  1   &  -5   &   3   &  -3  \\
 -5   &   7   &  -3   &  13  \\
\earay \right).
\]
It is $\re$-separable, because
\[
\mathfrak{m}(\mA) =
\bpm 2 & -1 \\ -1  & 1 \epm \boxtimes  \bpm 1 & 1 \\ 1 & 3 \epm +
\bpm 3 & 2 \\  2 & 2 \epm \boxtimes \bpm 1 & -2 \\ -2 & 5 \epm.
\]
The $\re$-psd rank is $2$, since $\mA$ does not have a decomposition
like \reff{sepa:A=sum:Bij:ot} for $s=1$. 
\end{example}

\section{Conclusions and future work}
\label{sc:con}

This paper studies Hermitian tensors, Hermitian decompositions, and related topics.
Every complex Hermitian tensor is a sum of complex Hermitian rank-$1$ tensors.
However, this is not true for the real case.
A real Hermitian tensor is not a sum of real rank-$1$ Hermitian tensors,
unless it belongs to a proper subspace.
We study basic properties about Hermitian decompositions and Hermitian ranks.
For canonical basis tensors, we have determined their Hermitian ranks
as well as the rank decompositions.
For real Hermitian tensors, we give a full characterization
for them to have Hermitian decompositions over the real field.
In addition to classical flattening,
there are two special types of matrix flattening for Hermitian tensors:
the Hermitian flattening and Kronecker flattening.
They may give different lower bounds for Hermitian ranks.
We give SOS characterizations for
psd Hermitian tensors. Separable Hermitian tensors
can be formulated as truncated moment problems over multi-spheres.
The cones of psd and separable Hermitian tensors
are dual to each other.

A basic question is to determine
Hermitian ranks, as well as the rank decompositions.
For general Hermitian tensors,
we do not know how to do that. This is an important future work.
We also have the notions of typical and generic Hermitian ranks.
To the best of the authors' knowledge,
the following question is mostly open.

\begin{problem}
For $m>1$ and $n_1, \ldots, n_m > 1$,
what is the generic Hermitian rank of $\bC^{[n_1,\ldots,n_m]}$?
Does $\bC^{[n_1,\ldots,n_m]}$ have a unique typical Hermitian rank?
If not, what is the range of typical Hermitian ranks?
For what cases of $m$ and $n_1, \ldots, n_m$,
does the expected Hermitian rank of $\bC^{[n_1,\ldots,n_m]}$
equal the generic Hermitian rank?
\end{problem}

Real Hermitian tensors are of strong interests in applications.
They may not have real Hermitian decompositions,
unless they lie in the subspace $\re_D^{[n_1,\ldots,n_m]}$.
It is expected that there is $\mH \in \re_D^{[n_1,\ldots,n_m]}$
such that $\hrank_{\re} (\mH) > \hrank (\mH)$.
However, such an explicit $\mH$ is not known to the authors.
So we pose the following question.

\begin{problem}  \label{prob:reD}
For what $\mH \in \re_D^{[n_1,\ldots,n_m]}$
does $\hrank_{\re} (\mH) > \hrank (\mH)$?
Does $\re_D^{[n_1,\ldots,n_m]}$ have an open subset
$T$ such that $\hrank_{\re} (\mH) = \hrank (\mH)$
for all $\mH \in T$?
\end{problem}

We remark that the answer to the second part of Problem~\ref{prob:reD}
is affirmative for the case $m=2$ and $n_1=n_2=2$.
Consider the identity tensor $\mathcal{I} \in \re_D^{[2,2]} $.
Its Hermitian flattening matrix is $I_4$.
Let $T = \{\cH\in \re_D^{[2,2]}: \|\cH-\mathcal{I}\|< 1 \} $, an open subset.
For all $\cH\in T$,
\[
\mathfrak{m} (\cH) = \begin{pmatrix}
		A & C \\
		C & B
 \end{pmatrix},
A \succ 0,
\]
since $\|A-I_2\|_2 \le \|\cH-\mathcal{I} \| < 1 $.
It holds that $\hrank_\re(\cH)=\hrank(\cH) $
for all $\cH \in T$, by Theorem~\ref{thm:RD22:rank}.
We are not sure if the same result holds
for general cases of $n_1, \ldots, n_m$.

For a separable Hermitian tensor $\mA \in \mathscr{S}_{\F}^{[n_1,\ldots,n_m]}$,
its $\F$-psd rank is the smallest integer $s$ in \reff{sepa:A=sum:Bij:ot}.
An important future work is to determine
$\F$-psd ranks for separable Hermitian tensors.

\begin{problem}
For $\mA \in \mathscr{S}_{\F}^{[n_1,\ldots,n_m]}$,
how do we determine its $\F$-psd rank?
\end{problem}

\bigskip
\noindent
{\bf Acknowledgement}
The authors are partially supported by the NSF grant DMS-1619973.
They would like to thank Lek-Heng Lim, Guyan Ni and the anonymous referees
for fruitful suggestions on improving the paper.







 \end{document}